\newtheorem{thm}{Theorem}[section]
\newtheorem{prop}[thm]{Proposition}
\newtheorem{lem}[thm]{Lemma}
\newtheorem{cor}[thm]{Corollary}
\newtheorem{conjecture}[thm]{Conjecture}
\numberwithin{equation}{section}
\theoremstyle{definition}
\newtheorem{definition}[thm]{Definition}
\newtheorem{remark}[thm]{Remark}
\newtheorem{ex}[thm]{Example}
\def\cont{{\, \subseteq \, }}
\newcommand{\os}[2]{\overset{#1}{#2}}
\newcommand{\ol}[1]{\overline {#1}}
\newcommand{\grf}[1]{\mbox{$\left \{ #1 \right \}$}}
\newcommand{\mc}[1]{\mathcal{#1}}
\def\glpiu{\widetilde{\rm Gl}_2^+(\mathbb{R})}
\def\glp{{\rm Gl}_2^+(\mathbb{R})}
\def\aa{\alpha}
\def\ff{\phi}
\def\pp{\pi}
\def\ss{\sigma}
\def\tt{\tau}
\def\ww{\omega}
\def\Ff{\Phi}
\def\Gg{\Gamma}
\def\Ss{\Sigma}
\newcommand{\HH}{\mathbb{H}}
\newcommand{\Db}{{\rm D}^{\rm b}}
\newcommand{\Km}{{\rm Km}}
\newcommand{\Aut}{{\rm Aut}}
\newcommand{\Br}{{\rm Br}}
\newcommand{\NS}{{\rm NS}}
\newcommand{\Pic}{{\rm Pic}}
\newcommand{\rk}{{\rm rk}\,}
\newcommand{\coh}{{\cat{Coh}}}
\newcommand{\End}{{\rm End}}
\newcommand{\Hom}{{\rm Hom}}
\newcommand{\Rig}{{\rm Rig}}
\newcommand{\Sph}{{\rm Sph}}
\newcommand{\OO}{{\rm O}}
\newcommand{\nk}{{\rm ker}}
\newcommand{\co}{{\rm coker}}
\newcommand{\Inf}{{\rm Inf}}
\newcommand{\Forg}{{\rm Forg}}
\newcommand{\NStab}{{\rm Stab}_{\kn}}
\newcommand{\NDStab}{{\rm Stab}^\dag_{\kn}}
\newcommand{\lto}{\longrightarrow}
\newcommand{\Stab}{{\rm Stab}}
\newcommand{\id}{{\rm id}}
\newcommand{\dual}{^{\vee}}
\newcommand{\mono}{\hookrightarrow}
\newcommand{\mor}[1][]{\xrightarrow{#1}}
\newcommand{\isomor}{\mor[\sim]}
\newcommand{\cat}[1]{\begin{bf}#1\end{bf}}
\newcommand{\KE}{{\rm Ker}^{G_\Delta}}
\newcommand{\cal}{\mathcal}
\newcommand{\ka}{{\cal A}}
\newcommand{\kb}{{\cal B}}
\newcommand{\kc}{{\cal C}}
\newcommand{\ke}{{\cal E}}
\newcommand{\kf}{{\cal F}}
\newcommand{\kg}{{\cal G}}
\newcommand{\kh}{{\cal H}}
\newcommand{\kn}{{\cal N}}
\newcommand{\ko}{{\cal O}}
\newcommand{\kp}{{\cal P}}
\newcommand{\ks}{{\cal S}}
\newcommand{\kt}{{\cal T}}
\newcommand{\kz}{{\cal Z}}
\newcommand{\ZZ}{\mathbb{Z}}
\newcommand{\QQ}{\mathbb{Q}}
\newcommand{\RR}{\mathbb{R}}
\newcommand{\CC}{\mathbb{C}}
\newcommand{\PP}{\mathbb{P}}
\newcommand{\til}[1]{\widetilde{#1}}
\begin{document}

\title{Inducing stability conditions}

\author[E.\ Macr\`\i, S.\ Mehrotra, and P.\ Stellari]{Emanuele Macr\`\i, Sukhendu Mehrotra, and Paolo Stellari}


\address{E.M.: Department of Mathematics, University of Utah, 155 South 1400 East, Salt
Lake City, UT 84112-0090, USA} \email{macri@math.utah.edu}

\address{S.M.: Department of Mathematics and Statistics, University of Massachusetts--Amherst, 710 N. Pleasant Street, Amherst,
MA 010003, USA}\email{mehrotra@math.umass.edu}

\address{P.S.: Dipartimento di Matematica ``F. Enriques'',
Universit{\`a} degli Studi di Milano, Via Cesare Saldini 50, 20133
Milano, Italy} \email{paolo.stellari@unimi.it}

\keywords{Stability conditions, equivariant derived categories, Enriques surfaces, local Calabi-Yau's}

\subjclass[2000]{18E30, 14J28, 14F05}

\begin{abstract} We study stability conditions induced by functors
between triangulated categories. Given a finite group acting on a
smooth projective variety we prove that the subset of invariant
stability conditions embeds as a closed submanifold into the
stability manifold of the equivariant derived category. As an
application we examine stability conditions on Kummer and Enriques
surfaces and we improve the derived version of the Torelli Theorem
for the latter surfaces already present in the litterature. We
also study the relationship between stability conditions on
projective spaces and those on their canonical
bundles.\end{abstract}

\maketitle

\section{Introduction}\label{sec:intro}

Stability conditions on triangulated categories were introduced by Bridgeland in \cite{B1}
following work of Douglas {\cite{Do}) on $\Pi$-stability in string theory. A central feature
of this construction is that, under mild conditions, the set of stability conditions on a
triangulated category carries a natural structure of a complex manifold. Since then, a fair
amount of effort has gone into studying such spaces of stability conditions for derived categories
of coherent sheaves on low-dimensional projective manifolds, due, in large part, to the intuition
that these stability manifolds should be (approximate) mathematical models of the physicists'
stringy K\"{a}hler moduli space (see \cite{BR} and \cite{Do}), and also because of the
close connection their geometry is expected to have with properties of the group of autoequivalences
of the derived category (\cite{B2, HMS}). Stability manifolds for curves have been explicitly
described in \cite{B1,Okada1,emolo}, while for minimal
resolutions of type-$A$ surface singularities, a complete
topological description of these spaces has been obtained in
\cite{IUU} (see also \cite{Okada2}). In particular, the stability manifold is connected and simply
connected in these cases.
The situation is rather more intricate for algebraic K3 surfaces, and
neither the connectedness nor the simply connectedness has been
proved. Nevertheless one distinguished connected component has
been identified and related to the problem of describing the group
of autoequivalences of the derived category of the surface
(\cite{B2}). For smooth generic analytic K3 surfaces a complete
picture is given in \cite{HMS}.

A basic difficulty in the theory, however, remains: how to systematically
construct examples of stability conditions, at least when the
geometry of the variety is well-understood? Naively, if two
varieties $X$ and $Y$  are related in some intimate geometric way,
one would expect to be able to solve the previous problem for $X$ once it has been
solved for $Y$. In other words the geometric connection between $X$
and $Y$ should produce some, perhaps weak, relation between their
stability manifolds.

In this paper, we develop a technique of inducing stability conditions via functors between triangulated categories with nice properties and show how, in certain geometric contexts, this procedure gives an answer to the problem mentioned above. In \cite{Pol}, Polishchuk proposed similar ideas from a somewhat different perspective (see the end of Section \ref{subsec:stabex}).

The first situation where this approach will be applied is the case of a smooth projective variety $X$ with the action of a finite group $G$. Forgetting linearizations on equivariant complexes gives a natural faithful functor between the equivariant derived category $\Db_G(X)$ and $\Db(X)$. Since $G$ acts in a natural way on the stability manifold $\Stab(\Db(X))$ of $\Db(X)$, one can consider the subset of invariant stability conditions on the derived category $\Db(X)$. Our first result is now the following:

\begin{thm}\label{thm:main2} The subset $\Gamma_X$ of invariant
stability conditions in $\Stab(\Db(X))$ is a closed submanifold.
The forgetful functor $\Forg_G$ induces a closed embedding
$$\Forg_G^{-1}:\Gamma_X\hookrightarrow\Stab(\Db_G(X))$$ such that
the semistable objects in $\Forg_G^{-1}(\sigma)$ are the objects
$\ke$ in $\Db_G(X)$ such that $\Forg_G(\ke)$ is semistable in
$\sigma\in\Gamma_X$.\end{thm}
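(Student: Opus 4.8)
The plan is to treat the two assertions of the theorem separately, first establishing the manifold structure of $\Gamma_X$ and then constructing the embedding. For the first part I would use the central charge map $\mc{Z}\colon\Stab(\Db(X))\to\Hom(\K(X),\CC)$, which by Bridgeland's theorem is a local homeomorphism onto an open subset. Each $g\in G$ acts on $\Stab(\Db(X))$ through the autoequivalence $g^*$, and this action is holomorphic and compatible with $\mc{Z}$ in the sense that $\mc{Z}(g\cdot\sigma)=\mc{Z}(\sigma)\circ[g^*]^{-1}$, where $[g^*]$ is the linear automorphism induced on $\K(X)$. A stability condition $\sigma$ lies in $\Gamma_X$ precisely when it is fixed by every $g$. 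Near an invariant point $\sigma_0$, local injectivity of $\mc{Z}$ forces $g\cdot\sigma=\sigma$ as soon as $\mc{Z}(\sigma)$ is $G$-invariant (both $g\cdot\sigma$ and $\sigma$ lie in a neighborhood on which $\mc{Z}$ is injective and have the same image). Hence locally $\Gamma_X=\mc{Z}^{-1}\bigl(\Hom(\K(X),\CC)^G\bigr)$ is the preimage of a linear subspace, so it is a submanifold; closedness follows since $\Gamma_X$ is the intersection over $g\in G$ of the fixed loci of the continuous maps $\sigma\mapsto g\cdot\sigma$.

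For the embedding, given $\sigma=(Z,\mc{P})\in\Gamma_X$ I would define $\Forg_G^{-1}(\sigma)=(Z_G,\mc{P}_G)$ on $\Db_G(X)$ by $Z_G=Z\circ[\Forg_G]$, where $[\Forg_G]\colon\K(\Db_G(X))\to\K(X)$ is the map on Grothendieck groups, and by declaring $\mc{P}_G(\phi)$ to be the full subcategory of objects $\ke$ with $\Forg_G(\ke)\in\mc{P}(\phi)$. The compatibility $Z_G(\ke)=m(\ke)\exp(i\pi\phi)$ for $\ke\in\mc{P}_G(\phi)$ is immediate from the definitions together with the corresponding identity for $\Forg_G(\ke)$ in $\sigma$, and the axioms $\mc{P}_G(\phi+1)=\mc{P}_G(\phi)[1]$ and the vanishing of $\Hom$ between objects of decreasing phase follow from faithfulness of $\Forg_G$ and the analogous properties of $\mc{P}$.

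The main obstacle is the existence of Harder--Narasimhan filtrations in $\Db_G(X)$. Here the invariance of $\sigma$ is exactly what is needed. Given $\ke\in\Db_G(X)$, its image $\Forg_G(\ke)$ has a unique HN filtration with respect to $\sigma$. The equivariant structure on $\ke$ provides isomorphisms $g^*\Forg_G(\ke)\cong\Forg_G(\ke)$, and since $\sigma$ is $G$-invariant each $g^*$ carries the HN filtration of $\Forg_G(\ke)$ to the HN filtration of $g^*\Forg_G(\ke)$; by uniqueness the linearization preserves the filtration and its factors. I would then argue that a filtration of $\Forg_G(\ke)$ stable under the linearization lifts canonically to a filtration of $\ke$ in $\Db_G(X)$, so that $\ke$ acquires an HN filtration whose factors lie in the various $\mc{P}_G(\phi_i)$. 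The local finiteness (support) property is inherited because $\Forg_G$ is faithful and the norms of classes are controlled through $[\Forg_G]$. This shows $\Forg_G^{-1}(\sigma)\in\Stab(\Db_G(X))$, and by construction its semistable objects are exactly those $\ke$ with $\Forg_G(\ke)$ semistable, as claimed.

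It remains to check that $\sigma\mapsto\Forg_G^{-1}(\sigma)$ is a closed embedding. Injectivity follows from the fact that a $G$-invariant central charge is determined by its composition with $[\Forg_G]$: using the induction functor one sees that the $G$-symmetrizations $\sum_{g}[g^*E]$ lie in the image of $[\Forg_G]$ and span the invariant part of $\K(X)$, so $Z_G$ recovers $Z$, and then $\mc{P}_G$ recovers $\mc{P}$. For the topological statement I would use the commutative square relating $\mc{Z}$ and the central charge map $\mc{Z}_G$ of $\Db_G(X)$: on the level of $\Hom$-spaces the induced arrow is precomposition with $[\Forg_G]$, which is a linear closed embedding when restricted to $G$-invariant functionals. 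Since both central charge maps are local homeomorphisms and the bottom arrow is a linear closed embedding, the induced map $\Forg_G^{-1}$ is a closed embedding as well, completing the proof.
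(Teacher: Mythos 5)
Your treatment of the first part and of the construction of $\Forg_G^{-1}(\sigma)$ follows essentially the paper's route: $\Gamma_X$ is cut out locally by the $G$-invariant functionals via the local homeomorphism $\mc{Z}$ (Lemma \ref{lem:clossub}; note you must shrink the injectivity neighbourhood to a $G$-stable one, which is possible since $G$ is finite and acts by isometries fixing $\sigma_0$), and the HN-filtrations descend because the maps induced by the linearization on the HN factors are \emph{unique}, thanks to the vanishing $\Hom(\ke',g^*\ka)=0$ and $\Hom(\ke',g^*\ka[-1])=0$, and hence automatically satisfy the cocycle condition (Lemma \ref{lem:welldef}); your sketch of the lifting should be made explicit at this point, since cones are not functorial, but the idea is the correct one.

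The genuine gap is in your last paragraph. From the commutative square $\mc{Z}_G\circ\Forg_G^{-1}=\iota\circ\mc{Z}|_{\Gamma_X}$, with $\iota$ the closed linear embedding of $G$-invariant functionals, one cannot conclude that $\Forg_G^{-1}$ is a closed embedding: $\mc{Z}_G$ is only a \emph{local} homeomorphism, so if $\Forg_G^{-1}(\sigma_n)\to\tau$ you only learn that $\mc{Z}_G(\tau)$ lies in the closed linear subspace, not that $\tau$ lies on the sheet $\Forg_G^{-1}(\Gamma_X)$ rather than on some other preimage of the same central charge; the same circularity prevents you from even deducing continuity of $\Forg_G^{-1}$ from the square (writing it locally as $(\mc{Z}_G)^{-1}\circ\iota\circ\mc{Z}$ presupposes that it maps small neighbourhoods into the chart where $\mc{Z}_G$ is invertible). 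The paper closes this gap with the adjoint functor $\Inf_G$: continuity of $F^{-1}$ is proved directly from the generalized metric via $d(F^{-1}\sigma'_1,F^{-1}\sigma'_2)\leq d(\sigma'_1,\sigma'_2)$ (Lemma \ref{prop:2.5}); the identity $\Inf_G^{-1}\circ\Forg_G^{-1}=(|G|\cdot\id_{\glpiu})$ on $\Gamma_X$ provides a continuous left inverse, so $\Forg_G^{-1}$ is a homeomorphism onto its image; and for closedness one applies $\Inf_G^{-1}$ to a convergent sequence $\Forg_G^{-1}(\sigma_n)\to\overline{\sigma}'$, using that ${\rm Dom}(\Inf_G^{-1})$ is closed (Lemma \ref{prop:2.6}) and that $\Gamma_X$ is closed to force $(|G|\cdot\id_{\glpiu})\sigma_n$ to converge inside $\Gamma_X$, and then concludes by continuity and uniqueness of limits (Proposition \ref{prop:clossubman}). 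Your symmetrization remark already contains the germ of this idea, but you only use it for injectivity of central charges; it must be upgraded to the topological argument, otherwise the closedness of the image is unproved.
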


As we will point out, an analogous statement holds when we restrict to numerical stability conditions. Note that the existence of a bijection between the subset of invariant stability conditions in $\Stab(\Db(X))$ and a certain subset of $\Stab(\Db_G(X))$ was already observed in \cite{Pol}.

A possible application of this theorem could be in the
construction of stability conditions on projective Calabi-Yau
threefolds. Let us briefly outline the strategy. Often, the
equivariant derived category of coherent sheaves of a variety with
a finite automorphism is a category generated by a strong
exceptional collection, (see \cite{GL}, or \ref{ex:projline}).
Stability conditions on such categories are relatively easy to
manufacture. Starting thus with a suitable Calabi-Yau threefold
$X$ with a finite automorphism, one would want to construct a
stability condition on the equivariant category which can be
deformed into the image under $\Forg_G^{-1}$ of the stability
manifold of $X$. Retracing one's steps through the
forgetful functor, one would then obtain a stability condition on
$X$ itself. Stability conditions on Calabi-Yau threefolds have seen
considerable interest recently thanks to the work of Kontsevich
and Soibelman on counting invariants. In the announced preprint
\cite{KontSoib}, the authors develop a generalized theory of
Donaldson-Thomas invariants for 3-Calabi-Yau categories which obey
certain wall-crossing formulas on the space of stability
conditions. The reader is encouraged to consult \cite{Joyce4, PT1,
PT2, Arend, Toda5, Toda6} for motivation and applications.

Let us briefly mention the two easy examples we consider in Section \ref{subsec:exinvar}
 to illustrate Theorem \ref{thm:main2}.
In \cite{GL}, the authors study the derived categories of certain weighted projective lines which are in fact stacks obtained as quotients of some plane cubics $E$ by the action of a natural involution. Our result then realizes the stability manifold of $E$ as a closed submanifold of the stability manifold of the derived category of each of these (stacky) weighted projective lines (see Example \ref{ex:projline}).

More interesting is the example of Kummer surfaces (see Example \ref{ex:Kummer}). In \cite{B2} Bridgeland describes the connected component of maximal dimension of the space $\NStab(\Db(A))$ parametrizing numerical stability conditions on an abelian surface $A$. Using the equivalence $\Db(\Km(A))\cong\Db_G(A)$ (see \cite{BKR}), where $G$ is the group generated by the natural involution on $A$, we show that this is embedded as a closed submanifold into a distinguished component of the space $\NStab(\Db(\Km(A)))$ of numerical stability conditions on the Kummer surface $\Km(A)$. The latter component was also studied in \cite{B2} and its topology is related to the description of the group of autoequivalences of $\Db(\Km(A))$.

This general philosophical approach to inducing stability conditions between close geometric
relatives is clearly reflected in our treatment of Enriques surfaces, which form the main example
to which we apply the techniques of Section \ref{sec:inducing}. Recall that an Enriques surface $Y$ is
a minimal smooth projective surface with $2$-torsion canonical bundle $\omega_Y$ and $H^1(Y,\ko_Y)=0$. The universal cover $\pi:X\to Y$ is a K3 surface and it carries a fixed-point-free involution $\iota:X\to X$ such that $Y=X/G$, where $G=\langle\iota\rangle$.

In \cite{B2}, Bridgeland studied  a distinguished connected
component $\NStab^\dag(\Db(X))$ of the stability manifold
$\NStab(\Db(X))$ parametrizing numerical stability conditions on
$\Db(X)$. The idea here is to relate this component to some
interesting connected component of $\NStab(\Db(Y))$. Write
$\OO(\widetilde H(X,\ZZ))_G$ for the group of $G$-equivariant
Hodge isometries of the Mukai lattice of $X$ and denote by
$\OO_+(\widetilde H(X,\ZZ))_G\subset\OO(\widetilde H(X,\ZZ))_G$
the index-$2$ subgroup of $G$-equivariant orientation preserving
Hodge isometries. Then our second main result gives a precise
description of this relationship:

\begin{thm}\label{thm:main1} Let $Y$ be an Enriques surface and $\pi:X\to Y=X/G$ its
universal cover.
\begin{itemize}
\item[{\rm (i)}] There exists a connected component
$\NStab^\dag(\Db(Y))$ of $\NStab(\Db(Y))$ naturally embedded into
$\NStab(\Db(X))$ as a closed submanifold. Moreover, if $Y$ is generic, the category $\Db(Y)$ does not contain spherical objects and $\NStab^\dag(\Db(Y))$ is isomorphic to the distinguished connected component $\NStab^\dag(\Db(X))$.
\item[{\rm (ii)}] There is a natural homorphism of groups $$\Aut(\Db(Y))\lto\OO(\widetilde H(X,\ZZ))_G/G$$ whose image is the index-2 subgroup $\OO_+(\widetilde H(X,\ZZ))_G/G$.
\end{itemize}
\end{thm}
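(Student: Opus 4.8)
The plan is to prove the two parts of Theorem \ref{thm:main1} by combining the general machinery of Theorem \ref{thm:main2} with the specific geometry of the K3/Enriques cover and the lattice theory of the Mukai lattice.

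\medskip

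\noindent\textbf{Strategy for part (i).} The first step is to apply Theorem \ref{thm:main2} to the fixed-point-free involution $\iota$ with $G=\langle\iota\rangle$. Via the equivalence $\Db(Y)\cong\Db_G(X)$ coming from descent along the free quotient $\pi:X\to Y$, the forgetful functor $\Forg_G$ is identified with the pullback $\pi^*:\Db(Y)\to\Db(X)$. Theorem \ref{thm:main2} then gives a closed embedding of the invariant locus $\Gamma_X\subset\NStab(\Db(X))$ into $\NStab(\Db(Y))$, and I would take $\NStab^\dag(\Db(Y))$ to be the connected component containing the image of $\Gamma_X\cap\NStab^\dag(\Db(X))$. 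The key point is to show that the distinguished component $\NStab^\dag(\Db(X))$ is itself $G$-invariant: since $\iota$ acts on the Mukai lattice $\widetilde H(X,\ZZ)$ preserving the Hodge structure and the orientation of positive-definite planes, it preserves the period domain governing $\NStab^\dag(\Db(X))$, hence permutes its connected components and must fix the distinguished one. Granting this, $\Gamma_X\cap\NStab^\dag(\Db(X))$ is an open and closed (hence full) piece of the invariant locus inside that component, and the embedding lands in a single component $\NStab^\dag(\Db(Y))$. For the genericity statement, one uses that a generic Enriques surface has no $(-2)$-classes compatible with the involution, so $\Db(Y)$ contains no spherical objects; the wall-and-chamber structure then degenerates and the covering/embedding becomes an isomorphism onto the whole distinguished component, because the obstruction to surjectivity of $\Forg_G^{-1}$ is exactly the presence of spherical objects creating extra walls downstairs.

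\medskip

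\noindent\textbf{Strategy for part (ii).} The homomorphism $\Aut(\Db(Y))\to\OO(\widetilde H(X,\ZZ))_G/G$ should be constructed by lifting an autoequivalence $\Phi$ of $\Db(Y)\cong\Db_G(X)$ to an autoequivalence of $\Db(X)$ commuting with $G$ up to the residual $G$-action, and then taking its cohomological (Mukai) realization, which is automatically a $G$-equivariant Hodge isometry; quotienting by the image of $G$ itself (acting by the equivariant structure, i.e. the copy of $G$ coming from tensoring by characters) gives a well-defined map to $\OO(\widetilde H(X,\ZZ))_G/G$. The substantive claim is the identification of the image with the index-2 orientation-preserving subgroup $\OO_+(\widetilde H(X,\ZZ))_G/G$. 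For the inclusion of the image into $\OO_+$, I would invoke the fact (from \cite{B2, HMS}) that cohomological transforms of autoequivalences of a K3 derived category preserve orientation, and that this property is inherited equivariantly. For surjectivity onto $\OO_+$, the plan is to use the action on $\NStab^\dag(\Db(Y))$ established in part (i): the established embedding identifies this component with a covering of a period domain on which $\OO_+(\widetilde H(X,\ZZ))_G/G$ acts, and by Bridgeland's description of the component for K3 surfaces together with the invariance from part (i), every such isometry is realized by an autoequivalence.

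\medskip

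\noindent\textbf{Main obstacle.} The hardest part is expected to be the surjectivity in (ii) onto the orientation-preserving subgroup, since it requires both the full control of the autoequivalence group in the equivariant setting and the orientation-preservation argument, which for K3 surfaces is already delicate and must now be carried out $G$-equivariantly. A secondary difficulty is verifying that the distinguished component descends compatibly — that is, that the invariant locus meets $\NStab^\dag(\Db(X))$ in a connected set mapping into a single component downstairs, rather than spreading across several components — which is where the explicit lattice-theoretic structure of the Enriques involution on $\widetilde H(X,\ZZ)$ must be used carefully.
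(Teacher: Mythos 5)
Your outline of part (i) follows the paper's route (define $\Sigma(Y)=\Forg_G^{-1}(\Gamma_X\cap\NDStab(\Db(X)))$ and take the relevant connected component), but two points you treat as automatic are where the actual work lies. First, knowing that $\iota^*$ preserves the distinguished component setwise does not produce a \emph{fixed} stability condition inside it; the paper obtains non-emptiness of $\Gamma_X\cap\NDStab(\Db(X))$ by exhibiting $\iota^*$-invariant classes $\omega,\beta$ (via Horikawa's period computations) and checking $\iota^*\sigma_{\omega,\beta}=\sigma_{\omega,\beta}$. Second, for $\NStab^\dag(\Db(Y))$ to embed back into $\NStab(\Db(X))$ you need the image of $\Forg_G^{-1}$ to be \emph{open} in $\NStab(\Db(Y))$, so that it is a union of components rather than a positive-codimension closed submanifold of some component; this is a dimension count resting on the fact that $\pi_*:(\kn(X)\otimes\CC)_G\to\kn(Y)\otimes\CC$ is an isomorphism, and the embedding of the statement is then realized by the adjoint $\Inf_G$, not by $\Forg_G$. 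Finally, ``no $(-2)$-classes, hence no spherical objects'' is not a one-line implication: the paper shows that a spherical $\ke\in\Db(Y)$ would have a rigid lift $\kg\in\Db(X)$ with $\chi(\ke,\ke)=2\chi(\kg,\kg)=4$, contradicting $\chi(\ke,\ke)=2$.

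The serious gap is in part (ii). Your construction of $\Pi$ by lifting kernels is the paper's (Ploog's $2{:}1$ correspondence between equivariant kernels and $G$-invariant autoequivalences), and the inclusion of the image in $\OO_+$ does come from the orientation result of \cite{HMS1}. But your argument for surjectivity onto $\OO_+(\widetilde H(X,\ZZ))_G/G$ does not work: the covering-space picture coming from part (i) identifies the group of deck transformations with $\Aut^0(\Db(Y))/\langle(-)\otimes\omega_Y\rangle$, that is, with a piece of the \emph{kernel} of $\Pi$, so it carries no information about the image. Surjectivity requires actually constructing, for each equivariant orientation-preserving Hodge isometry $\psi$, an autoequivalence of $\Db(X)$ realizing $\psi$ \emph{and commuting with $\iota^*$ as a functor}, i.e.\ an equivariant Derived Torelli theorem. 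This occupies most of the paper's proof: a case division on $\psi(0,0,1)$, and in the crucial case the verification that the moduli space $M_h(v)$ of stable sheaves with invariant Mukai vector $v$ inherits a fixed-point-free involution making the universal family equivariant, together with the use of commuting pairs of spherical twists $T_{\ko_{C_i}(-1)}\circ T_{\ko_{\iota^*C_i}(-1)}$ along $\iota$-orthogonal $(-2)$-curves to correct the action on the Picard lattice. None of this is recoverable from the stability manifold alone.
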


To avoid confusion, it is perhaps worth pointing out that the
embedding in (i) is not realized by the forgetful functor as in
Theorem \ref{thm:main2} but by an adjoint of this functor.
Nevertheless, Theorem \ref{thm:main2} will be used in the proof of
Theorem \ref{thm:main1} to identify the connected component
$\NStab^\dag(\Db(Y))$. As we will see in Proposition
\ref{prop:rigid}, in the generic case one can be even more
explicit about the classification of (strongly) rigid objects in
$\Db(Y)$.

As explained in Section \ref{subsec:autoeq}, part (ii) of the
previous result can be seen as an improvement of the derived
version of the Torelli Theorem for Enriques surfaces proved in
\cite{BM}. This result asserts that, given two Enriques surfaces
$Y_1$ and $Y_2$ with universal covers $X_1$ and $X_2$,
$\Db(Y_1)\cong\Db(Y_2)$ if and only if there exists a Hodge
isometry $\widetilde H(X_1,\ZZ)\cong\widetilde H(X_2,\ZZ)$ of the
total cohomology groups which is equivariant with respect to the
involutions $\iota_1$ and $\iota_2$ defined on $X_1$ and $X_2$. As
a consequence of our results (see Corollary \ref{cor:dtt}) we get
a characterization of the Hodge isometries $\widetilde
H(X_1,\ZZ)\cong\widetilde H^2(X_2,\ZZ)$ induced by all possible
Fourier--Mukai equivalences $\Db(Y_1)\cong\Db(Y_2)$.

Furthermore, the relation between the connected component
$\NStab^\dag(\Db(Y))$ and the description of $\Aut(\Db(Y))$ is
quite deep. Indeed we prove that there exists a covering map from
an open and closed subset $\Sigma(Y)\subseteq\NStab(\Db(Y))$
containing $\NStab^\dag(\Db(Y))$, onto some period domain and a
subgroup of $\Aut(\Db(Y))$ acts as the group of deck
transformations. As it will be explained, we expect
$\Sigma(Y)=\NStab^\dag(\Db(Y))$ and the geometric picture is very
similar to the one given by Bridgeland for K3 surfaces. In
particular we state a conjecture (Conjecture \ref{conj:BriEnr})
about $\Aut(\Db(Y))$ which extends Conjecture 1.2 in \cite{B2} to
the case of Enriques surfaces.

Finally we study two other geometric situations for which the procedure of inducing stability conditions via faithful functors can be exploited. First of all, we compare stability conditions on projective spaces with those on their canonical bundles. Our result in this direction is Theorem \ref{thm:stab1} whose main part can be summarized as follows:

\begin{thm}\label{thm:main3} An open subset of $\Stab(\Db(\PP^1))$
embeds into the stability manifold of the total space of the
canonical bundle $\ww_{\PP^1}$ as a fundamental domain for the
action of the group of autoequivalences.\end{thm}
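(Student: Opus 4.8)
The plan is to obtain Theorem \ref{thm:main3} as one more instance of the inducing mechanism of Section \ref{sec:inducing}, applied to the faithful functor supplied by the bundle projection. Write $p\colon\ww_{\PP^1}\to\PP^1$ for the projection and $i\colon C=\PP^1\mono\ww_{\PP^1}$ for the zero section. Since $p$ is an affine morphism, the pushforward $p_*\colon\Db(\ww_{\PP^1})\to\Db(\PP^1)$ (restricted to the subcategory of objects with support proper over $\PP^1$) is exact, faithful and conservative, and it plays here precisely the role that $\Forg_G$ plays in Theorem \ref{thm:main2}, its left adjoint $p^*$ being the analogue of the inflation functor. Given $\sigma=(Z,\mc P)\in\Stab(\Db(\PP^1))$ one sets $Z'=Z\circ[p_*]$ on the numerical Grothendieck group of $\ww_{\PP^1}$ and declares $\ke$ to be semistable of phase $\phi$ exactly when $p_*\ke$ is $\sigma$-semistable of phase $\phi$. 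First I would verify, invoking the general results of Section \ref{sec:inducing}, that this data defines a stability condition $p_*^{-1}(\sigma)$ on $\Db(\ww_{\PP^1})$ precisely on an open subset $U\subseteq\Stab(\Db(\PP^1))$, the subset being cut out by the positivity requirement that $Z\circ[p_*]$ send every nonzero induced semistable object into the upper half-plane. Because the induced heart is generated by the images of finitely many line bundles, this reduces to open conditions on the central charges $Z(\ko(k))$, so that $U$ is open and nonempty.

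Next I would show that the resulting map $p_*^{-1}\colon U\to\Stab(\Db(\ww_{\PP^1}))$ is an embedding onto an open subset. Injectivity follows because $p_*\comp i_*=\id$ forces $[p_*]$ to be surjective on numerical Grothendieck groups, so the two central charges determine one another, while the slicings are matched objectwise; that the map is a local isomorphism is a formal consequence of the inducing construction, both spaces being complex manifolds of the same dimension $2$ (recall $\Stab(\Db(\PP^1))\iso\CC^2$ by \cite{Okada1}). This already yields the embedding of an open subset asserted in the statement.

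The heart of the matter is the fundamental-domain assertion, and this is where I expect the real work to lie. Here I would first describe the relevant connected component of $\Stab(\Db(\ww_{\PP^1}))$ together with the action of $\Aut(\Db(\ww_{\PP^1}))$: modulo the connected automorphism group, which acts trivially on numerical stability conditions, the acting group is generated by the shift, by the twist $\otimes\,p^*\ko_{\PP^1}(1)$, and by the spherical twist $T_{\ko_C}$ along the $(-2)$-curve $C$ (the line bundles $\ko_C(k)$ being the spherical objects). The plan is then to analyse the boundary $\partial U$: crossing a wall of $U$ corresponds to an induced semistable object, a twist of $\ko_C$, acquiring a phase outside the admissible range, and I would show that passing through such a wall is undone exactly by applying one of the generating autoequivalences above. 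Consequently the translates $g\cdot p_*^{-1}(U)$, for $g$ in the group, meet along these walls and have pairwise disjoint interiors.

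Finally, to conclude that these translates exhaust an entire connected component — the genuinely delicate step — I would set up the period map $\Stab(\Db(\ww_{\PP^1}))\to\Hom(K,\CC)\iso\CC^2$ and identify the component as a covering of the complement of the locus where the central charge degenerates on a spherical class, with $\Aut$ realised as the group of deck transformations; the image $p_*^{-1}(U)$ then maps homeomorphically onto a fundamental domain downstairs, and a local-finiteness argument shows that the $\Aut$-orbit of $p_*^{-1}(U)$ is open and closed, hence everything. The main obstacle is precisely this global step: controlling the wall-and-chamber structure well enough to guarantee that the tiling by the translates of $p_*^{-1}(U)$ leaves no gaps, which requires both the explicit classification of spherical objects on $\ww_{\PP^1}$ and a proof that the period covering has the expected deck group.
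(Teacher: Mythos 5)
The central device you propose --- inducing stability conditions on $\Db_0(|\ww_{\PP^1}|)$ by pulling back along the projection pushforward $p_*$ --- does not work, and the paper explicitly warns against it at the end of Section \ref{subsec:OmegaPN}: under the identification of $\Db_0(|\ww_{\PP^1}|)$ with a category of equivariant sheaves on $\PP^1$, the functor $p_*$ \emph{is} the forgetful functor, but unlike $\Forg_G$ for a finite group it fails condition \eqref{eq:ind}. Concretely, $\Db_0(|\ww_{\PP^1}|)$ is a $2$-Calabi--Yau category, so $\Hom^2(i_*\kf,i_*\kf)\cong\Hom(i_*\kf,i_*\kf)^\vee\neq 0$ for any sheaf $\kf$ on $\PP^1$, whereas $\Hom^2_{\Db(\PP^1)}(p_*i_*\kf,p_*i_*\kf)=\Hom^2_{\Db(\PP^1)}(\kf,\kf)=0$; taking $A=i_*\kf$ and $B=i_*\kf[2]$ gives $\Hom(p_*A,p_*B)=0$ but $\Hom(A,B)\neq 0$. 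Hence none of the general results of Section \ref{subsec:stabex} (Remark \ref{rmk:HN}, Lemmas \ref{prop:2.6} and \ref{prop:2.5}) apply to $p_*$, and the recipe ``$\ke$ is semistable iff $p_*\ke$ is'' is not guaranteed to produce a slicing at all. The paper goes the other way: it uses the faithful zero-section pushforward $i_*:\Db(\PP^1)\to\Db_0(|\ww_{\PP^1}|)$ and \emph{pushes forward} stability conditions via Proposition \ref{pro:existence}, applied to the $i_*$-admissible hearts $\cat{Q}_0^{F}=\langle\ke_0[1],\ke_1\rangle$ attached to the iterated mutations $F=\{\ke_0,\ke_1\}$ of $\{\ko_{\PP^1},\ko_{\PP^1}(1)\}$ (the shift pattern is exactly what makes Definition \ref{def:Fadmis}(ii) hold). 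The relevant open subset of $\Stab(\Db(\PP^1))$ is the union of the $\glpiu$-orbits of stability conditions with these hearts, not a locus cut out by a positivity condition on the $Z(\ko(k))$.

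Your plan for the fundamental-domain assertion is also incomplete exactly where you locate the difficulty. You propose to identify the $\Aut$-orbit of the embedded chamber by realizing the period map as a covering with $\Aut$ acting as deck transformations; but for $\Db_0(|\ww_{\PP^1}|)$ establishing that covering structure is essentially equivalent to the exhaustion statement you are trying to prove, so the argument as sketched is circular. The paper's proof of Theorem \ref{thm:stab1} argues directly: any locally finite $\sigma$ admits at least two stable spherical objects with independent classes; by the Ishii--Uehara classification (every spherical object is, up to shift, the image of $\ko_C$ under the group generated by spherical twists, line bundle twists, automorphisms and shifts, and its total cohomology has the form $\ko_C(a)^{\oplus r}\oplus\ko_C(a+1)^{\oplus s}$), one moves $\sigma$ by an autoequivalence and by $\glpiu$ until $\ko_C$ and some $\ko_C(a)$ with $a>0$ (or $\ko_C(b)[1]$ with $b<0$) are both stable, and then an explicit triangle argument forces the heart to be $\langle\ko_C(-1)[1],\ko_C\rangle$. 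The wall-crossing bookkeeping you outline would still need to be supplemented by this classification to guarantee that the translates of the chamber leave no gaps.
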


It is known by the work in \cite{Okada2,IUU} that the space of stability conditions on the canonical bundle of the projective line is connected and simply connected.  We provide a new simpler proof of these topological properties based on \cite{HMS}, on the way to proving the previous result.

The last example is concerned with the relationship between the spaces of stability conditions on resolutions of Kleinian singularities and those of the corresponding quivers, with particular attention to the $A_2$-singularity case. The picture we get is completely similar to the one presented in the previous theorem.

\medskip

The plan of the paper is as follows. In Section \ref{sec:inducing} we recall Bridgeland's construction of stability conditions and the metric properties of the stability manifold. We then show how a faithful functor induces stability conditions and apply these results to equivariant derived categories.

In Section \ref{sec:equiv} we study Enriques surfaces and show
that a connected component of the stability manifold of such a
surface embeds as a closed submanifold into the space of stability
conditions of its universal cover. An improvement of the derived
Torelli Theorem for Enriques surfaces is proved and we
conjecturally relate the topology of the distinguished connected
component to the group of autoequivalences. Some special
properties of generic Enriques surfaces are then described.

Finally, Section \ref{sec:CY} treats the case of projective spaces and of the corresponding cotangent bundles and further examples.

Unless otherwise stated all varieties considered in this paper are meant to be smooth projective and defined over the complex numbers.

\section{Exact functors and invariant stability conditions}\label{sec:inducing}

In this section we show how to induce stability conditions using
exact functors with nice properties. This permits us to relate the
stability manifolds of different projective varieties taking into
account certain geometric relations between the varieties
themselves. Notice that a criterion for the existence of induced
stability conditions in geometric contexts (including the ones we
will consider) can be easily deduced from \cite{Pol} (see Theorem
\ref{thm:Poli}).

As a first application, in Section \ref{subsec:gen} we prove
Theorem \ref{thm:main2} according to which, given a smooth
projective variety with an action of a finite group, the closed
submanifold of invariant stability conditions embeds into the
stability manifold of the equivariant derived category. We use
this to show that the unique connected component of maximal
dimension of the stability manifold of an abelian surface embeds
into Bridgeland's connected component of the stability manifold of
the associated Kummer surface.

\bigskip

\subsection{Bridgeland's framework}\label{subsec:bri}~

\medskip

In this section we recall a few results from \cite{B1} which will
be used throughout this paper. For the moment, let $\cat{T}$ be an
essentially small triangulated category and let $K(\cat{T})$ be
its Grothendieck group.

\begin{definition}\label{def:main} A \emph{stability condition} on $\cat{T}$
is a pair $\sigma=(Z,\kp)$ where $Z:K(\cat{T})\to\CC$ is a group
homomorphism (the \emph{central charge}) and
$\kp(\phi)\subset\cat{T}$ are full additive subcategories,
$\phi\in\RR$, satisfying the following conditions:

(a) If $0\ne E\in\kp(\phi)$, then $Z(E)=m(E)\exp(i\pi\phi)$ for
some $m(E)\in\RR_{>0}$.

(b) $\kp(\phi+1)=\kp(\phi)[1]$ for all $\phi\in\RR$.

(c) If $\phi_1>\phi_2$ and $E_i\in\kp(\phi_i)$, $i=1,2$, then
$\Hom_{\cat{T}}(E_1,E_2)=0$.

(d) Any $0\ne E\in\cat{T}$ admits a \emph{Harder--Narasimhan
filtration} (\emph{HN-filtration} for short) given by a collection
of distinguished triangles $E_{i-1}\to E_i\to A_i$ with $E_0=0$
and $E_n=E$ such that $A_i\in\kp(\phi_i)$ with
$\phi_1>\ldots>\phi_n$.\end{definition}

It can be shown that each subcategory $\mc{P} (\phi)$ is
extension-closed and abelian. Its non-zero objects are called
\emph{semistable} of phase $\phi$, while the objects $A_i$ in (d)
are the \emph{semistable factors} of $E$. The minimal objects of
$\kp(\phi)$ are called \emph{stable} of phase $\phi$ (recall that
a \emph{minimal object} in an abelian category, also called
\emph{simple}, is a non-zero object without proper subobjects or
quotients). A HN-filtration of an object $E$ is unique up to a
unique isomorphism. We write $\phi^+_\sigma (E) := \phi_1$,
$\phi^-_\sigma (E) := \phi_n$, and $m_\sigma (E) := \sum_j
|Z(A_j)|$.

For any interval $I \subseteq \RR$, $\mc{P} (I)$ is defined to be
the extension-closed subcategory of $\cat{T}$ generated by the
subcategories $\mc{P} (\phi)$, for $\phi \in I$. Bridgeland proved
that, for all $\phi \in \RR$, $\mc{P} ((\phi, \phi + 1])$ is the
heart of a bounded $t$-structure on $\cat{T}$. The category
$\mc{P} ((0, 1])$ is called the \emph{heart} of $\sigma$.
In general, the category $\kp((a,b))\subseteq\cat{T}$, for $a,b\in\RR$ with $0<b-a\leq1$, is \emph{quasi-abelian} (see \cite[Sect.\ 4]{B1}) and the strict short exact sequences are the distinguished triangles in $\cat{T}$ whose vertices are all in $\kp((a,b))$.

\begin{remark}\label{rmk:tstruct} As pointed out in \cite[Prop.\ 5.3]{B1} to exhibit a stability condition on a triangulated category $\cat{T}$, it is enough to give a bounded $t$-structure on $\cat{T}$ with heart $\cat{A}$ and a group homomorphism $Z:K(\cat{A})\to\CC$ such that $Z(E)\in\HH$, for all $0\neq E\in\cat{A}$, and with the Harder--Narasimhan property (see \cite[Sect.\ 5.2]{B1}). Recall that $\HH:=\{ z\in\CC^*:z=|z|\exp(i\pi\phi), \, 0< \phi \leq 1 \}$ and that the above homomorphism $Z$ is called a \emph{stability function}.

As a special case, if $\cat{A} \subseteq \cat{T}$ is the heart of
a bounded $t$-structure and moreover it is an abelian category of
finite length (i.e.\ artinian and noetherian), then a group
homomorphism $Z: K(\cat{A}) \to \CC$  with  $Z(E)\in\HH$, for all
minimal objects $E\in\cat{A}$, extends to a unique stability
condition on $\cat{T}$.\end{remark}

A stability condition is called \emph{locally-finite} if there
exists some $\epsilon > 0$ such that, for all $\phi\in\RR$, each
quasi-abelian subcategory  $\mc{P} ((\phi - \epsilon , \phi +
\epsilon))$ is of finite length. In this case $\mc{P} (\phi)$ has
finite length so that every object in $\mc{P} (\phi)$ has a finite
\emph{Jordan--H\"older filtration} (\emph{JH-filtration} for
short) into stable factors of the same phase. The set of stability
conditions which are locally finite will be denoted by $\Stab
(\cat{T})$.

By \cite[Prop.\ 8.1]{B1} there is a natural topology on $\Stab
(\cat{T})$ defined by the generalized metric
\begin{equation}\label{eq:metric}
d(\sigma_1, \sigma_2) := \underset{0 \neq E \in \cat{T}}{\sup}
\left\lbrace |\phi_{\sigma_2}^+ (E) - \phi_{\sigma_1}^+ (E)|,
|\phi_{\sigma_2}^- (E) - \phi_{\sigma_1}^- (E)|, \left\vert\log
\frac{m_{\sigma_2} (E)}{m_{\sigma_1} (E)}\right\vert \right\rbrace
\in [0, \infty].
\end{equation}

\begin{remark}\label{rmk:action} Bridgeland proved in \cite[Lemma 8.2]{B1} that there are two groups which naturally act on $\Stab(\cat{T})$. The first one is the group of exact autoequivalences $\Aut(\cat{T})$ which, moreover, preserves the structure of generalized metric space just defined.

The universal cover $\glpiu$ of $\glp$ acts on the right in the
following way. Let $(G,f) \in \glpiu$, with $G \in\glp$ and $f:
\RR \to \RR$ an increasing map such that $f(\phi+1)=f(\phi)+1$ and
$G\exp(i\pi \phi)/ | G\exp(i\pi \phi)| =\exp(2i \pi f(\phi))$, for
all $\phi \in \RR$. Then $(G, f)$ maps $(Z, \kp) \in \Stab
(\cat{T})$ to $(G^{-1} \circ Z, \kp \circ f)$.\end{remark}

The result from \cite{B1} that we will need for the rest of the
paper is the following:

\begin{thm}\label{thm:bridmain}{\bf (\cite{B1}, Theorem 1.2.)}
For each connected component $\Sigma \subseteq \Stab (\cat{T})$
there is a linear subspace $V(\Sigma) \subseteq (K(\cat{T})
\otimes \CC)^{\vee}$ with a well-defined linear topology such that
the natural map $$\mc{Z}:\Sigma\lto V(\Sigma),\;\;\;\;\;\;\;(Z,
\mc{P})\longmapsto Z$$ is a local homeomorphism. In particular, if
$K(\cat{T}) \otimes \CC$ is finite dimensional, $\Sigma$ is a
finite dimensional complex manifold.
\end{thm}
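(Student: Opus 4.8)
The plan is to realize $\mc{Z}$ as a local homeomorphism by equipping the target with a norm coming from a chosen base point and then establishing, separately, local injectivity (the easy half) and an openness/deformation statement (the hard half). Fix $\sigma=(Z,\kp)\in\Sigma$. On the dual $(K(\cat{T})\otimes\CC)^\vee$ I would introduce the (possibly infinite) norm
$$\|U\|_\sigma:=\sup\left\{\frac{|U(E)|}{m_\sigma(E)}\;:\;0\neq E\in\cat{T}\right\},$$
and define $V(\Sigma)$ to be the subspace of functionals of finite norm. A first lemma must check that if $\sigma'$ lies in the same component then $\|\cdot\|_{\sigma}$ and $\|\cdot\|_{\sigma'}$ are equivalent (indeed Lipschitz-comparable once $d(\sigma,\sigma')<\infty$), so that $V(\Sigma)$ and its linear topology are intrinsic to $\Sigma$; this is where connectedness enters. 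Granting this, the map $\mc{Z}$ lands in $V(\Sigma)$ because for any $\tau$ close to $\sigma$ the masses $m_\tau$ and $m_\sigma$ are comparable, by the $|\log(m_{\sigma}/m_{\tau})|$ term in the definition of $d$.

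For local injectivity I would exploit the fact that the central charge, together with the requirement that semistable objects have phases in a narrow window, already pins down the slicing. Concretely, the metric estimates built into $d$ give, for two stability conditions $\sigma=(Z,\kp)$ and $\tau=(W,\kq)$ at finite distance, two-sided bounds relating $\|Z-W\|_\sigma$ to $d(\sigma,\tau)$; in particular $\mc{Z}$ is continuous, and if $W=Z$ and $\tau$ is close enough to $\sigma$, then every $\sigma$-semistable object is forced to be $\tau$-semistable of the same phase, whence $\kp=\kq$ and $\sigma=\tau$. This yields injectivity on a metric ball.

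The hard part is the deformation (openness) statement: given $\sigma=(Z,\kp)$ and a group homomorphism $W:K(\cat{T})\to\CC$ with $\|W-Z\|_\sigma<\sin(\pi\epsilon)$ for small $\epsilon$, I must produce a genuine locally finite stability condition $\tau=(W,\kq)$ with $d(\sigma,\tau)<\epsilon$. The idea is that the smallness of $\|W-Z\|_\sigma$ keeps $W(E)$ inside an angular sector of half-width $\pi\epsilon$ about the $\sigma$-phase of each $\sigma$-semistable $E$, so one can assign $W$-phases by perturbing $\sigma$-phases. Building the slicing $\kq$ then amounts to re-grouping the $\sigma$-HN factors, refined through the Jordan--H\"older filtrations available in the finite-length categories $\kp(\phi)$, into new HN filtrations adapted to $W$. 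The genuine obstacle is verifying the Harder--Narasimhan axiom (d) for $(W,\kq)$: one must rule out the infinite chains of sub/quotient objects that a naive deformation could create. This is precisely the point at which local finiteness of $\sigma$ is indispensable, and the argument proceeds by a boundedness estimate showing the perturbed phases take only finitely many values on the relevant finite-length pieces. Uniqueness of $\tau$ then follows from the injectivity half applied to $W$.

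Finally, combining continuity, injectivity on balls, and the deformation statement shows that $\mc{Z}$ restricts to a homeomorphism from a neighbourhood of each $\sigma$ onto an open subset of $V(\Sigma)$, i.e.\ a local homeomorphism. When $K(\cat{T})\otimes\CC$ is finite dimensional, $V(\Sigma)$ carries its unique Hausdorff linear topology and the local charts $\mc{Z}$ endow $\Sigma$ with the structure of a finite dimensional complex manifold.
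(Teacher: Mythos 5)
This theorem is quoted from \cite{B1} and the paper offers no proof of its own, so the relevant comparison is with Bridgeland's original argument. Your outline reproduces that argument essentially verbatim: the norm $\|\cdot\|_\sigma$ and the finite-norm subspace $V(\Sigma)$, the equivalence of norms across a connected component, local injectivity from the metric (Lemma 6.4 of \cite{B1}), and the deformation theorem under the bound $\|W-Z\|_\sigma<\sin(\pi\epsilon)$ with local finiteness controlling the Harder--Narasimhan axiom (Theorem 7.1 of \cite{B1}) --- so the proposal is correct and takes the same route.
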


The complex manifold $\Stab (\cat{T})$ will be called the
\emph{stability manifold} of $\cat{T}$.


Suppose now that the category $\cat{T}$ is $\CC$-linear and of
finite type. The Euler--Poincar\'e form on $K(\cat{T})$
\[
\chi(E,F):=\sum_{i\in\ZZ}(-1)^i(E,F)^i,
\]
where $(E,F)^i:=\dim_\CC\Hom_{\cat{T}}(E,F[i])$, allows us to
define the \emph{numerical Grothendieck group}
$\kn(\cat{T})=K(\cat{T})/K(\cat{T})^\perp$ (the orthogonal is with
respect to $\chi$). We will say that $\cat{T}$ is
\emph{numerically finite} if the rank of $\kn(\cat{T})$ is finite.
To shorten notation, when $\cat{T}=\Db(X):=\Db(\coh(X))$, for $X$
smooth and projective variety over $\CC$, we will write $\kn(X)$
instead of $\kn(\Db(X))$. Notice that, in such a case, by the
Riemann--Roch Theorem guarantees that $\Db(X)$ is numerically
finite. For $X$ a K3 surface, then
$\kn(X)=H^0(X,\ZZ)\oplus\NS(X)\oplus H^4(X,\ZZ)$, where $\NS(X)$
denotes the N\'eron--Severi group of $X$.

Assume that $\cat{T}$ is numerically finite. A stability condition
$\sigma=(Z,\kp)$ such that $Z$ factors through
$K(\cat{T})\twoheadrightarrow\kn(\cat{T})$ is called
\emph{numerical}. We denote by $\NStab(\cat{T})$ the complex
manifold parametrizing numerical stability conditions. As an
immediate consequence of the definition, an analogue of Theorem
\ref{thm:bridmain} holds true in the numerical setting (see
\cite[Cor.\ 1.3]{B1}).

We conclude this section with a discussion of two examples of
stability conditions needed in the sequel.

\begin{ex}\label{ex:K3ab} {\bf (K3 and abelian surfaces)} We briefly recall
the construction in \cite{B2} for abelian or K3 surfaces $X$.
Fix
$\omega,\beta\in\NS(X)\otimes\QQ$ with $\omega$ in the ample cone
and define the categories $\kt(\omega,\beta)$ consisting of
sheaves whose torsion-free part have $\mu_\omega$-semistable
Harder--Narasimhan factors with slope greater than
$\omega\cdot\beta$ and $\kf(\omega,\beta)$ consisting of
torsion-free sheaves whose $\mu_\omega$-semistable
Harder--Narasimhan factors have slope smaller or equal to
$\omega\cdot\beta$. Next consider the abelian category
\[
\ka(\omega,\beta):=\left\{\ke\in\Db(X):\begin{array}{l}
\bullet\;\;\kh^i(\ke)=0\mbox{ for }i\not\in\{-1,0\},\\\bullet\;\;
\kh^{-1}(\ke)\in\kf(\omega,\beta),\\\bullet\;\;\kh^0(\ke)\in\kt(\omega,\beta)\end{array}\right\}
\]
and the $\CC$-linear map
\[
Z_{\omega,\beta}:\kn(X)\lto\CC,\;\;\;\;\;\;\;\;\;\ke\longmapsto\langle\exp{(\beta+i\omega)},v(\ke)\rangle,
\]
where $v(\ke)$ is the Mukai vector of $\ke\in\Db(X)$ and
$\langle-,-\rangle$ is the Mukai pairing (see \cite[Ch.\
10]{Hu1}). By \cite[Lemma 6.2, Prop.\ 7.1]{B2}, if
$\omega\cdot\omega>2$, the pair
$(Z_{\omega,\beta},\ka(\omega,\beta))$ defines a stability
condition (Remark \ref{rmk:tstruct}).

In the rest of the paper we will be particularly interested in the
connected component $$\NDStab(\Db(X))\subseteq\NStab(\Db(X))$$
extensively studied in \cite{B2}. It can be described as the
connected component containing the stability conditions
$(Z_{\omega,\beta},\ka(\omega,\beta))$ with $\omega$ and $\beta$
as above.\end{ex}

\begin{ex}\label{ex:PN} {\bf (Projective spaces)} Recall that an object $E$ in a triangulated category $\cat{T}$ is \emph{exceptional} if
\[
\Hom^i_{\cat{T}}(E,E)\cong\left\lbrace\begin{array}{ll} \CC &
\mbox{if } i=0\\0 & \mbox{otherwise}\end{array}\right.
\]
An ordered collection of exceptional objects
$E=\grf{E_0,\ldots,E_n}$ is \emph{strong exceptional} in $\cat{T}$
if $\Hom^k_{\cat{T}}(E_i,E_j) \neq 0$ only if $i\leq j$ and $k=0$.
A strong exceptional collection of two objects is a \emph{strong
exceptional pair}. Finally, a strong exceptional collection is
\emph{complete}, if $E$ generates $\cat{T}$ by shifts and
extensions.

By \cite{Beil, GR} we know that $\Db (\PP^N)$ admits a complete
strong exceptional collection given by $\grf{\mc{O}, \ldots ,
\mc{O}(N)}$. Fix $E=\grf{\ke_0, \ldots,\ke_N}$ to be a strong
complete exceptional collection on $\Db (\PP^N)$. We construct
some explicit examples of stability conditions associated to $E$
in the following way (we use freely \cite{emolo}).

By \cite[Lemma 3.14]{emolo}, the subcategory
$\langle E \rangle_{\ol{p}} := \langle\ke_0 [p_0], \ldots,\ke_N
[p_N] \rangle\subseteq\cat{T}$ generated by extensions by $\ke_0 [p_0], \ldots,\ke_N
[p_N]$, for a collection of integers $\ol{p}=\{
p_0,\ldots,p_N\}$, with $p_0 > p_1 > \ldots > p_N$, is the heart
of a bounded $t$-structure on $\Db (\PP^N)$. Then $\langle E
\rangle_{\ol{p}}$ is an abelian category of finite length and the
Grothendieck group $K(\PP^N)$ is a free abelian group of finite
rank isomorphic to $\ZZ^{\oplus (N+1)}$ generated by the classes
of $\ke_0,\ldots,\ke_N$. Fix $z_0,\ldots, z_N\in\HH$ and define a
stability function
\[
Z_{\ol{p}} : K(\langle E \rangle_{\ol{p}}) \lto
\CC,\;\;\;\;\;\;\;\;\;\ke_i [p_i]\longmapsto z_i,
\]
for all $i$. By Remark \ref{rmk:tstruct} this extends to a unique
locally finite stability condition $\sigma_{\ol{p}}^E$ on
$\Db(\PP^N)$.

Define $\Theta_{E}$ as the subset of $\Stab (\PP^N) := \Stab (\Db
(\PP^N)) = \NStab (\Db (\PP^N))$ consisting of stability
conditions $\sigma$ of the form $\sigma=\sigma_{\ol{p}}^E\cdot(G,f)$,
for some strictly decreasing collection of integers
$\ol{p}=\{p_0,\ldots,p_N\}$ and for $(G,f)\in\glpiu$. By
\cite[Lemma 3.16]{emolo}, $\ke_0,\ldots,\ke_N$ are stable in all
stability conditions in $\Theta_{E}$. Moreover, for a stability
condition $\sigma_{\ol{p}}^E$ having $\rk_{\RR} Z_{\ol{p}} =1$
(thinking of $Z_{\ol{p}}$ as a map from $K (\PP^N)\otimes\RR$ to
$\CC\cong\RR^2$), $\ke_0,\ldots,\ke_N$ are the only stable objects
(up to shifts). Lemma 3.19 of \cite{emolo} shows that
$\Theta_{E}\cont\Stab(\Db(\PP^N))$ is an open, connected and
simply connected $(N+1)$-dimensional submanifold.\end{ex}

\bigskip

\subsection{Construction of induced stability conditions}\label{subsec:stabex}~

\medskip

In general stability, conditions do not behave well with respect
to exact functors between triangulated categories. What we are
going to show is that in the particular cases discussed below, it
may be possible to induce stability conditions from one category
to another (see \cite{Pol} and also \cite{emolo2,Toda2}).

Let $F : \cat{T} \to \cat{T}'$ be an exact functor between two
essentially small triangulated categories. Assume that $F$
satisfies the following condition:
\begin{equation*}\tag{\rm Ind}\label{eq:ind}
\Hom_{\cat{T}'} (F(A), F(B)) = 0 \ \mbox{ implies }\
\Hom_{\cat{T}} (A, B) = 0, \qquad \mbox{for any } A, B \in
\cat{T}.
\end{equation*}
For example, if $F$ is faithful, condition \eqref{eq:ind} holds.
Notice that, in particular, if \eqref{eq:ind} holds, then
$F(A)\cong 0$, for some $A\in\cat{T}$, implies that $A\cong 0$.
Let $\sigma'= (Z', \mc{P}') \in \Stab (\cat{T}')$ and define
$\sigma = F^{-1} \sigma' = (Z, \mc{P})$ by
\begin{align*}
\ & Z = Z' \circ F_*, \\
\ & \mc{P} (\phi) = \{ E \in \cat{T} \, : \, F(E)\in \mc{P'}
(\phi) \},
\end{align*}
where $F_*:K(\cat{T})\otimes\CC\to K(\cat{T'})\otimes\CC$ is the
natural morphism induced by $F$.


\begin{remark}\label{rmk:HN} (i) The categories $\mc{P} (\phi)$ are additive and extension-closed. Moreover $\sigma$ satisfies the first three properties of Definition \ref{def:main}.
Hence, in order to prove that $\sigma$ is a stability condition on
$\cat{T}$, it will be sufficient to prove that HN-filtrations
exist.

(ii) Once we know that HN-filtrations exist in $\sigma$, then
local-finiteness is automatic. Indeed $F$ induces a functor
$\mc{P} ((\phi-\epsilon,\phi +\epsilon))\to\mc{P}' ((\phi -
\epsilon , \phi + \epsilon))$ which, by definition, maps strict
short exact sequences into strict short exact sequences. Now
condition \eqref{eq:ind} guarantees that if we have a strict
inclusion $A\mor[l]B$, with $A,B\in\mc{P} ((\phi-\epsilon,\phi
+\epsilon))$, such that the induced map $F(A)\mor[F(l)]F(B)$ is an
isomorphism, then also $A\mor[l]B$ is an isomorphism. Hence, an
easy check shows that $\kp((\phi-\epsilon,\phi +\epsilon))$ is of
finite-length, provided that $\mc{P}' ((\phi - \epsilon , \phi +
\epsilon))$ is of finite-length.

(iii) Let $\sigma'\in\Stab(\cat{T'})$ and suppose that
$\sigma:=F^{-1}\sigma'\in\Stab(\cat{T})$. Then
$F^{-1}(\sigma'\cdot(G,f))=\sigma\cdot(G,f)\in\Stab(\cat{T})$, for any
$(G,f)\in\glpiu$.\end{remark}

\begin{lem}\label{prop:2.6}
Assume $F$ satisfies \eqref{eq:ind}. Then the subset
$${\rm Dom} (F^{-1}) := \{ \sigma' \in \Stab (\cat{T}') \, : \, \sigma = F^{-1} \sigma' \in \Stab (\cat{T}) \}$$
is closed.
\end{lem}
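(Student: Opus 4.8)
The plan is to prove closedness by showing that ${\rm Dom}(F^{-1})$ contains each of its limit points. Fix $\sigma'$ in the closure of ${\rm Dom}(F^{-1})$ inside $\Stab(\cat{T}')$ and set $\sigma=F^{-1}\sigma'=(Z,\kp)$. By Remark \ref{rmk:HN}(i) this $\sigma$ already satisfies conditions (a)--(c) of Definition \ref{def:main}, and by Remark \ref{rmk:HN}(ii) local-finiteness comes for free once Harder--Narasimhan filtrations exist; so the whole problem reduces to producing, for every $0\neq E\in\cat{T}$, an HN-filtration of $E$ in $\sigma$. The idea is to borrow such a filtration from a single stability condition of ${\rm Dom}(F^{-1})$ lying very close to $\sigma'$, the key point being that the required closeness may be chosen \emph{after} fixing $E$, so no uniformity over $\cat{T}$ is needed.

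First I would fix $0\neq E\in\cat{T}$, so that $F(E)\neq0$ by \eqref{eq:ind}. Since $\sigma'=(Z',\kp')$ is a genuine stability condition, $F(E)$ has a $\sigma'$-HN-filtration with semistable factors $B_1,\dots,B_r$ of phases $\psi_1>\dots>\psi_r$; write $B_{(j)}$ for its $j$-th step, giving triangles $B_{(j)}\to F(E)\to C_j$ where $C_j$ has HN-factors $B_{j+1},\dots,B_r$. I then choose $\epsilon>0$ with $2\epsilon<\min_j(\psi_j-\psi_{j+1})$ and midpoints $t_j=\tfrac12(\psi_j+\psi_{j+1})$, and—using that $\sigma'$ is a limit point—select one $\tau=(W,\kq)\in{\rm Dom}(F^{-1})$ with $d(\sigma',\tau)<\epsilon$ in the metric \eqref{eq:metric}. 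The crucial comparison is that the inequalities defining \eqref{eq:metric} force $\kp'(\phi)\subseteq\kq((\phi-\epsilon,\phi+\epsilon))$ for every $\phi$ (a $\sigma'$-semistable object of phase $\phi$ has its $\tau$-phases within $\epsilon$ of $\phi$); since the gaps exceed $2\epsilon$, the factors $B_1,\dots,B_j$ land strictly above $t_j$ and $B_{j+1},\dots,B_r$ strictly below, whence $B_{(j)}\in\kq((t_j,\infty))$ and $C_j\in\kq((-\infty,t_j])$. As $(\kq((t_j,\infty)),\kq((-\infty,t_j]))$ is a torsion pair, this identifies the triangle $B_{(j)}\to F(E)\to C_j$ with the unique $\tau$-truncation of $F(E)$ at $t_j$.

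Next I would transfer this back to $\cat{T}$. Because $\tau\in{\rm Dom}(F^{-1})$, the pullback $F^{-1}\tau$ is an honest stability condition on $\cat{T}$, so $E$ has a $(F^{-1}\tau)$-truncation $\tilde E_j\to E\to E''_j$ at each $t_j$. Applying the exact functor $F$ and using that the slicing of $F^{-1}\tau$ at phase $\phi$ is $\{X:F(X)\in\kq(\phi)\}$, the image triangle $F(\tilde E_j)\to F(E)\to F(E''_j)$ is again a $\tau$-truncation of $F(E)$ at $t_j$; by the uniqueness just established, $F(\tilde E_j)\cong B_{(j)}$. With $\tilde E_0=0$ and $\tilde E_r=E$ (both endpoints forced by \eqref{eq:ind}), the $\tilde E_j$ assemble into a filtration of $E$ in $\cat{T}$ whose $j$-th factor maps under $F$ onto $B_{(j)}/B_{(j-1)}\cong B_j\in\kp'(\psi_j)$. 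By the defining relation $\kp(\psi_j)=\{X:F(X)\in\kp'(\psi_j)\}$ this factor lies in $\kp(\psi_j)$, and since $\psi_1>\dots>\psi_r$ this is exactly an HN-filtration of $E$ in $\sigma$. Hence $\sigma\in\Stab(\cat{T})$, i.e.\ $\sigma'\in{\rm Dom}(F^{-1})$, and the set is closed.

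The step I expect to be the real obstacle is the comparison identifying the $\sigma'$-HN steps $B_{(j)}$ with $\tau$-truncations: one must check that a gap of more than $2\epsilon$ between consecutive phases is precisely what is needed for the metric inequalities to separate the semistable pieces of $F(E)$ onto opposite sides of each $t_j$, and that the truncation of a slicing is governed by a torsion pair and is therefore unique. Everything else—that $F$ carries truncations to truncations, and that \eqref{eq:ind} pins down the two endpoints and the non-vanishing of the factors—is routine bookkeeping built on Remark \ref{rmk:HN}.
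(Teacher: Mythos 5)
Your argument is correct and is essentially the paper's own proof: both take the HN-filtration of $F(E)$ in the limiting condition, pass to a nearby $\tau\in{\rm Dom}(F^{-1})$ close enough that the phase gaps persist, and use a uniqueness statement to see that each step of the limiting filtration is the image under $F$ of the corresponding piece of a filtration of $E$ in $F^{-1}\tau$. The only difference is packaging: the paper refines the limiting filtration by the $\tau$-HN-filtrations of its factors and invokes uniqueness of HN-filtrations, whereas you compare against the unique truncation triangles of the torsion pairs $\left(\kq((t_j,\infty)),\kq((-\infty,t_j])\right)$ --- the same idea in slightly different clothing.
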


\begin{proof} Assume  $(\sigma')_s \to \overline{\sigma'} = (\overline{Z'}, \overline{\mc{P}'})$ in $\Stab (\cat{T}')$, with $(\sigma')_s \in {\rm Dom} (F^{-1})$. We want to prove that $\overline{\sigma} = F^{-1} \overline{\sigma'} = (\overline{Z}, \overline{\mc{P}})$ has HN-filtrations. If $0\neq E \in \cat{T}$, then in $\overline{\sigma'}$ there exists a HN-filtration of $F(E)$. Denote the semistable factors of $F(E)$ by $A_1,\ldots,A_n$, with $\overline{\phi'} (A_1) > \ldots > \overline{\phi'} (A_n)$, where $\overline{\phi'}$ denotes the phase in  $\overline{\sigma'}$.

Since $(\sigma')_s \to \overline{\sigma'}$, for $s\gg 0$ we can
assume $(\phi')_s^{\pm} (A_1) > \ldots > (\phi')_s^{\pm} (A_n)$.
By replacing each $A_i$ by its HN-filtration in $(\sigma')_s$ we
get the HN-filtration of $F(E)$ in $(\sigma')_s$. Since
$(\sigma')_s \in{\rm Dom} (F^{-1})$, this HN-filtration is, up to
isomorphism, the image via $F$ of the HN-filtration of $E$ in
$\sigma_s = F^{-1} (\sigma')_s$. Note that we are using here the
uniqueness of the semistable objects and of the morphisms in
HN-filtrations, up to isomorphism. This means that $A_i \in
F(\cat{T})$, for all $i\in\{1,\ldots,n\}$. By the uniqueness of
HN-filtrations, this proves that the HN-filtration just considered
is the image of a HN-filtration in $\sigma$ via the functor
$F$.\end{proof}

\begin{lem}\label{prop:2.5}
Assume $F$ satisfies \eqref{eq:ind}. Then the map $F^{-1}:{\rm
Dom}(F^{-1})\to\Stab(\cat{T})$ is continuous.
\end{lem}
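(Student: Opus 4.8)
The plan is to show that $F^{-1}$ is in fact distance non-increasing for the generalized metric \eqref{eq:metric}, from which continuity is immediate. The crucial input is that the construction of $\sigma=F^{-1}\sigma'$ is compatible with HN-filtrations in the strongest sense: if $E_{i-1}\to E_i\to A_i$ (with $A_i\in\mc{P}(\phi_i)$ and $\phi_1>\ldots>\phi_n$) is the HN-filtration of a nonzero $E\in\cat{T}$ in $\sigma$, then applying the exact functor $F$ yields distinguished triangles $F(E_{i-1})\to F(E_i)\to F(A_i)$ with $F(A_i)\in\mc{P}'(\phi_i)$ by the very definition of $\mc{P}(\phi_i)$, and with $F(A_i)\neq 0$ since $A_i\neq 0$ and $F$ satisfies \eqref{eq:ind}. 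As the inequalities $\phi_1>\ldots>\phi_n$ persist, uniqueness of HN-filtrations identifies this with the HN-filtration of $F(E)$ in $\sigma'$; this is exactly the correspondence already exploited in the proof of Lemma \ref{prop:2.6}.

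First I would record the two identities this yields for every $0\neq E\in\cat{T}$:
\[
\phi^\pm_\sigma (E) = \phi^\pm_{\sigma'} (F(E)), \qquad m_\sigma (E) = m_{\sigma'} (F(E)).
\]
The statement about phases is immediate from the identification of the semistable factors of $E$ with those of $F(E)$. For the masses, recall that $Z=Z'\circ F_*$, so that $|Z(A_j)|=|Z'(F(A_j))|$ for each factor; summing over $j$ gives $m_\sigma(E)=\sum_j|Z(A_j)|=\sum_j|Z'(F(A_j))|=m_{\sigma'}(F(E))$.

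Next I would feed these identities into the metric. For $\sigma'_1,\sigma'_2\in{\rm Dom}(F^{-1})$ put $\sigma_k=F^{-1}\sigma'_k$. Each of the three quantities inside the supremum defining $d(\sigma_1,\sigma_2)$, evaluated at a fixed nonzero $E\in\cat{T}$, equals the corresponding quantity for $\sigma'_1,\sigma'_2$ evaluated at $F(E)$. Since $F(E)\neq 0$ by \eqref{eq:ind}, that value is one of the quantities entering the supremum defining $d(\sigma'_1,\sigma'_2)$, hence is bounded above by $d(\sigma'_1,\sigma'_2)$. Taking the supremum over all $0\neq E\in\cat{T}$ gives $d(F^{-1}\sigma'_1,F^{-1}\sigma'_2)\leq d(\sigma'_1,\sigma'_2)$, so $F^{-1}$ is $1$-Lipschitz and in particular continuous.

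The argument is formal once the HN-compatibility is secured; the one point demanding care — and the place where \eqref{eq:ind} is genuinely used, beyond the mere existence of HN-filtrations from Lemma \ref{prop:2.6} — is the guarantee that $F$ carries the HN-filtration of $E$ to an \emph{honest} HN-filtration of $F(E)$, i.e. that no semistable factor collapses to zero. This non-collapsing is what turns the correspondence of semistable factors into a bijection, and therefore what makes the three metric quantities agree exactly rather than merely up to an inequality.
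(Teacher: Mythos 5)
Your proposal is correct and follows essentially the same route as the paper's proof: both establish that $F$ carries the HN-filtration of $E$ in $\sigma$ to that of $F(E)$ in $\sigma'$, deduce the identities $\phi^\pm_\sigma(E)=\phi^\pm_{\sigma'}(F(E))$ and $m_\sigma(E)=m_{\sigma'}(F(E))$, and conclude that $d(F^{-1}\sigma'_1,F^{-1}\sigma'_2)\leq d(\sigma'_1,\sigma'_2)$. Your extra remark on why no semistable factor collapses under $F$ is a point the paper leaves implicit, but it does not change the argument.
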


\begin{proof}
First of all notice that, given a nonzero object $E\in\cat{T}$ and
$\sigma=F^{-1}\sigma'\in\Stab(\cat{T})$, then the image via $F$ of
the HN-filtration of $E$ with respect to $\sigma$ is the HN-filtration of $F(E)$ with respect to $\sigma'$. Hence
$\phi^+_{\sigma}(E)=\phi^+_{\sigma'}(F(E))$,
$\phi^-_{\sigma}(E)=\phi^-_{\sigma'}(F(E))$, and
$m_\sigma(E)=m_{\sigma'}(F(E))$.

As a consequence, for
$\sigma_1=F^{-1}\sigma_1',\sigma_2=F^{-1}\sigma_2'\in\Stab(\cat{T})$,
the following inequality holds:
\[
\begin{split}
d(\sigma_1,\sigma_2)&=\underset{0 \neq E \in \cat{T}}{\sup} \left\lbrace |\phi_{\sigma_2}^+ (E) - \phi_{\sigma_1}^+ (E)|, |\phi_{\sigma_2}^- (E) - \phi_{\sigma_1}^- (E)|, \left\vert\log \frac{m_{\sigma_2} (E)}{m_{\sigma_1} (E)}\right\vert \right\rbrace\\
&=\underset{0 \neq E \in \cat{T}}{\sup} \left\lbrace |\phi_{\sigma_2'}^+ (F(E)) - \phi_{\sigma_1'}^+ (F(E))|, |\phi_{\sigma_2'}^- (F(E)) - \phi_{\sigma_1'}^- (F(E))|, \left\vert\log \frac{m_{\sigma_2'} (F(E))}{m_{\sigma_1'} (F(E))}\right\vert \right\rbrace\\
&\leq\underset{0 \neq G \in \cat{T'}}{\sup} \left\lbrace |\phi_{\sigma_2'}^+ (G) - \phi_{\sigma_1'}^+ (G)|, |\phi_{\sigma_2'}^- (G) - \phi_{\sigma_1'}^- (G)|, \left\vert\log \frac{m_{\sigma_2'} (G)}{m_{\sigma_1'} (G)}\right\vert \right\rbrace\\
&=d(\sigma_1',\sigma_2').
\end{split}
\]
Thus, since the topology on the stability manifold is induced by
the generalized metric $d$, $F^{-1}$ is continuous.
\end{proof}

We now show how to construct stability conditions using special
exact functors and abelian categories. Similar existence results
will be considered in Section \ref{subsec:gen}.

\begin{definition}\label{def:Fadmis} Let $F:\cat{T}\to\cat{T}'$ be an exact functor. An abelian category $\cat{A}\subseteq\cat{T}$ is called \emph{$F$-admissible} if
\begin{itemize}
\item[(i)] $\cat{A}$ is the heart of a bounded $t$-structure;

\item[(ii)] $\Hom^{<0}_{\cat{T'}}(F(A),F(B))=0$, for all
$A,B\in\cat{A}$;

\item[(iii)] $F$ is full when restricted to $\cat{A}$.
\end{itemize}\end{definition}

For a subcategory $\cat{C}$ of $\cat{T}$, we denote by
$\langle\cat{C}\rangle\subseteq\cat{T}$ the smallest
extension-closed full subcategory containing $\cat{C}$. We now
show how to produce hearts of bounded $t$-structures.

\begin{lem}\label{lem:inducedtstr}
Let $F:\cat{T}\to\cat{T}'$ be an exact functor, and assume that
$\langle F (\cat{T}) \rangle =\cat{T'}$. Let $\cat{A} \subseteq
\cat{T}$ be an $F$-admissible abelian category. Then the
subcategory $\cat{A'}:=\langle F
(\cat{A})\rangle\subseteq\cat{T}'$ is the heart of a bounded
$t$-structure on $\cat{T}'$.
\end{lem}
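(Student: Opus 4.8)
The plan is to build the required $t$-structure by hand: prescribe its two aisles as extension closures of shifts of $\cat{A}'$, and then check the axioms one at a time, reducing each to a property of $F$ on $\cat{A}$. Concretely, I would set
\[
\mathcal{D}^{\le 0} := \langle \cat{A}'[k] : k\ge 0\rangle, \qquad \mathcal{D}^{\ge 0} := \langle \cat{A}'[k] : k\le 0\rangle,
\]
the extension closures being taken inside $\cat{T}'$. Since $\cat{A}'=\langle F(\cat{A})\rangle$ and shifting commutes with forming extension closures, these equal $\langle F(\cat{A})[k] : k\ge 0\rangle$ and $\langle F(\cat{A})[k] : k\le 0\rangle$. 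By construction both are extension-closed and satisfy $\mathcal{D}^{\le 0}[1]\subseteq\mathcal{D}^{\le 0}$ and $\mathcal{D}^{\ge 0}[-1]\subseteq\mathcal{D}^{\ge 0}$, so the only substantive points are the orthogonality between the aisles and the existence of truncation triangles; the heart will then be $\mathcal{D}^{\le 0}\cap\mathcal{D}^{\ge 0}$, which I will identify with $\cat{A}'$.

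First I would prove $\Hom_{\cat{T}'}(\mathcal{D}^{\le 0}, \mathcal{D}^{\ge 1})=0$, where $\mathcal{D}^{\ge 1}:=\mathcal{D}^{\ge 0}[-1]$. On generators this is immediate: for $A,B\in\cat{A}$ and $k\ge 0$, $l\le -1$ one has $\Hom_{\cat{T}'}(F(A)[k],F(B)[l])=\Hom^{l-k}_{\cat{T}'}(F(A),F(B))$ with $l-k<0$, which vanishes by the $F$-admissibility condition (ii). To pass from generators to the whole aisles I would use that $\Hom_{\cat{T}'}(-,W)$ and $\Hom_{\cat{T}'}(W,-)$ are cohomological, so the vanishing propagates along the distinguished triangles defining the two extension closures; a double induction on the number of extensions then gives the claim. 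This is where condition (ii) does its essential work.

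Next I would produce, for every object of $\cat{T}'$, a triangle with first term in $\mathcal{D}^{\le 0}$ and third term in $\mathcal{D}^{\ge 1}$. For an object $F(E)$ with $E\in\cat{T}$ I invoke condition (i): as $\cat{A}$ is the heart of a bounded $t$-structure on $\cat{T}$, there is a truncation triangle $\tau^{\le 0}E\to E\to\tau^{\ge 1}E$ with $\tau^{\le 0}E\in\langle\cat{A}[k]:k\ge 0\rangle$ and $\tau^{\ge 1}E\in\langle\cat{A}[k]:k\le -1\rangle$. Applying the exact functor $F$, which carries extensions to extensions, yields $F(\tau^{\le 0}E)\in\mathcal{D}^{\le 0}$ and $F(\tau^{\ge 1}E)\in\mathcal{D}^{\ge 1}$, so $F(E)$ admits the desired triangle. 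To propagate this to all of $\cat{T}'=\langle F(\cat{T})\rangle$, I would show that the full subcategory of objects admitting such a decomposition triangle is extension-closed: given a triangle $X\to Z\to Y$ with $X,Y$ decomposable, the orthogonality of the previous step forces the composite $Y^{\le 0}\to Y\to X[1]\to X^{\ge 1}[1]$ (a map from $\mathcal{D}^{\le 0}$ into $\mathcal{D}^{\ge 1}[1]\subseteq\mathcal{D}^{\ge 1}$) to vanish; this lets me lift $Y^{\le 0}\to X[1]$ to $Y^{\le 0}\to X^{\le 0}[1]$, form $Z^{\le 0}$ as the extension of $Y^{\le 0}$ by $X^{\le 0}$ (which lies in $\mathcal{D}^{\le 0}$ since that aisle is extension-closed), form $Z^{\ge 1}$ symmetrically, and assemble the triangle for $Z$ via the octahedral axiom. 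Since this subcategory is extension-closed and contains $F(\cat{T})$, it contains $\langle F(\cat{T})\rangle=\cat{T}'$, so truncation triangles exist everywhere; boundedness is automatic, as each $F(E)$ is a finite extension of shifts $F(H^i_{\cat{A}}(E))[-i]$ occupying finitely many shifts of $\cat{A}'$.

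This establishes that $(\mathcal{D}^{\le 0},\mathcal{D}^{\ge 0})$ is a bounded $t$-structure. Its heart is $\mathcal{D}^{\le 0}\cap\mathcal{D}^{\ge 0}$, which clearly contains $\cat{A}'$; for the reverse inclusion I would argue that an object lying in both aisles has cohomology concentrated in degree $0$, and then rule out nonzero shifts in its $\mathcal{D}^{\le 0}$-filtration by computing the lowest-degree cohomology (using $\Hom(\mathcal{D}^{\le -1},\mathcal{D}^{\ge 0})=0$, a shift of the orthogonality above), so that the heart equals $\cat{A}'$. The main obstacle is the extension-closedness step of the previous paragraph, that is, upgrading truncations from the generating class $F(\cat{T})$ to all of $\cat{T}'$: this is exactly where the hypothesis $\langle F(\cat{T})\rangle=\cat{T}'$ enters, and it is the vanishing $\Hom^{<0}_{\cat{T}'}(F(\cat{A}),F(\cat{A}))=0$ that makes the octahedral gluing go through. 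I note that phrasing the construction in terms of the coarse two-step truncations, rather than trying to reorder an object's full filtration into strictly decreasing shifts, is precisely what avoids the delicate case of two consecutive factors whose degrees differ by exactly one, which cannot be separated by $\Hom$-vanishing alone.
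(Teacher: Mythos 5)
The architecture you propose (two aisles defined as extension closures, orthogonality checked on generators, truncation triangles pushed forward from $\cat{T}$ and then propagated by extension-closedness) breaks down at the propagation step, and the breakdown is not cosmetic. You assert that the composite $Y^{\le 0}\to Y\to X[1]\to X^{\ge 1}[1]$ vanishes because it lands in ``$\mathcal{D}^{\ge 1}[1]\subseteq\mathcal{D}^{\ge 1}$''; but with your conventions $\mathcal{D}^{\ge 1}[1]=\mathcal{D}^{\ge 0}$, which \emph{contains} $\mathcal{D}^{\ge 1}$ rather than being contained in it, and $\Hom_{\cat{T}'}(\mathcal{D}^{\le 0},\mathcal{D}^{\ge 0})$ is of course nonzero (it contains the identity of every object of $\cat{A}'$). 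So the obstruction to lifting $Y^{\le 0}\to X[1]$ through $X^{\le 0}[1]$ does not die for degree reasons, the octahedral gluing does not go through, and in fact the class of objects admitting a two-step truncation is genuinely not extension-closed under the hypotheses you actually use. Concretely, take $R=k[x]/(x^2)$, $\cat{T}'=\mathrm{Perf}(R)$, $\cat{A}=k\text{-}\mathrm{vect}\subseteq\cat{T}=\Db(k)$ and $F=R\otimes_k(-)$, so that $\cat{A}'=\mathrm{add}(R)$: conditions (i) and (ii) hold and $\langle F(\cat{T})\rangle=\cat{T}'$, yet the complex $Z=[R\stackrel{x}{\to}R]$ (in degrees $0,1$) is an extension of $R\in\cat{A}'$ by $R[-1]\in\mathcal{D}^{\ge 1}$, both trivially truncatable, while $Z$ itself admits no triangle $P\to Z\to Q$ with $P\in\mathcal{D}^{\le 0}$ and $Q\in\mathcal{D}^{\ge 1}$: the long exact sequence of cohomology would force $P\cong k$, which is not even a perfect complex. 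The nonvanishing obstruction in your scheme is exactly the connecting map $x:R\to R$.

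What rescues the lemma, and what your argument never invokes, is condition (iii): $F$ is full on $\cat{A}$ (it fails for $R\otimes_k(-)$ above, since $x$ is not of the form $f\otimes\mathrm{id}$). The dangerous configuration is an extension whose connecting morphism is a degree-zero map $g:F(B)\to F(A)$ between (shifts of) objects of $F(\cat{A})$; to split the corresponding cone into a $\mathcal{D}^{\le 0}$-piece and a $\mathcal{D}^{\ge 1}$-piece one must write $g=F(f)$ with $f:B\to A$ a morphism in $\cat{A}$, and then use the kernel and cokernel of $f$, computed in the abelian category $\cat{A}$ and transported by $F$. This is precisely the case $i_k=j'_{k-1}+1$ in the paper's proof, which reorders an arbitrary filtration of an object of $\cat{T}'$ into one with decreasing shifts and then applies \cite[Lemma 3.2]{B1}; the case of degrees differing by at least two is the only one that pure $\Hom$-vanishing handles. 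So the ``delicate case'' you intended to avoid is not avoided by passing to coarse truncations --- it reappears verbatim as the nonvanishing obstruction in your gluing step, and any correct proof must confront it using fullness.
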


\begin{proof}
Since, by assumption, the smallest triangulated subcategory of
$\cat{T'}$ containing $F(\cat{T})$ is $\cat{T'}$ itself, then, if
$0\neq E\in\cat{T'}$, there exist $M_1 , \ldots , M_k \in \cat{T}$
such that $E$ admits a filtration given by distinguished triangles
$E_{s-1}\to E_s\to F(M_s)$ ($s=1,\ldots,k$) with $E_0=0$ and
$E_k=E$. Moreover, due to the fact that $\cat{A}$ is the heart of
a $t$-structure, we may assume that $M_s=A_s[i_s]$ with
$A_1,\ldots,A_k\in\cat{A}$ and $i_1,\ldots,i_k$ integers.

We claim that there exist $B_1,\ldots,B_k\in\cat{A}$ (some of them could be zero) and
$j_1,\ldots,j_k$ integers such that $E$ can be filtered by
distinguished triangles $Q_{s-1}\to Q_s\to F(B_s[j_s])$
($s=1,\ldots,k$) with $Q_0=0$, $Q_k=E$, and $j_1\geq
j_2\geq\ldots\geq j_{k-1}\geq j_k=\min\{i_1,\ldots, i_k\}$. Notice
that $\cat{A}'$ is, by definition, a full additive subcategory of
$\cat{T}'$ and $\Hom^{<0}_{\cat{A'}} (A',B')=0$ if
$A',B'\in\cat{A'}$ . Hence by \cite[Lemma 3.2]{B1} the claim
implies  that $\cat{A}'$ is the heart of a bounded $t$-structure
on $\cat{T}$, as wanted.

\medskip

To prove the claim we proceed by induction on $k$. For $k=1$ there
is nothing to prove and so we may assume $k>1$. Consider
$E_{k-1}$. By the induction hypothesis, there exist
$B'_1,\ldots,B'_{k-1}\in\cat{A}$ and $j'_1,\ldots,j'_{k-1}$
integers such that $E_{k-1}$ can be filtered by distinguished
triangles $Q'_{s-1}\to Q'_s\to F(B'_s[j'_s])$ ($s=1,\ldots,k-1$)
with $Q'_0=0$, $Q'_{k-1}=E_{k-1}$, and $j'_1\geq\ldots\geq
j'_{k-1}=\min\{i_1,\ldots, i_{k-1}\}$. If $j'_{k-1}\geq i_k$ we
have our desired filtration by setting $B_k:=A_k$, $j_k:=i_k$,
$B_s:=B'_s$, $Q_s:=Q'_s$ and $j_s:=j'_s$ if $s=1,\ldots,k-1$.
Otherwise, we distinguish two cases.

If $i_k = j'_{k-1} +1$, let $A$ be a cone of $Q'_{k-2}\to E$. Then
we have a diagram of exact triangles
\begin{equation*}
\xymatrix{Q'_{k-2}\ar[d]\ar[r] & E\ar[d]^{\id}\ar[r] & A\ar[d]\\
          Q'_{k-1}=E_{k-1}\ar[d]\ar[r] & E\ar[d]\ar[r] & F(A_k)[i_k]\ar[d]^{g[i_k]}\\
          F(B'_{k-1})[i_k - 1]\ar[r] & 0\ar[r] & F(B'_{k-1})[i_k].\\
          }
\end{equation*}
The condition that $F$ is full when restricted to $\cat{A}$ yields
$A\cong F(D)$, where $D$ is a cone of
$A_k[i_k-1]\mor[f{[i_{k-1}]}]B'_{k-1}[i_k-1]$ and $F(f)=g$. Since
$\cat{A}$ is abelian, $D$ is the extension
$$\nk(f) [i_k] \to D \to \co(f) [i_k - 1].$$
So, the last part of the filtration becomes
\begin{equation*}
\mbox{\small
\xymatrix{Q'_{k-3}\ar[rr] && Q'_{k-2}\ar[rr]\ar[dl] && Q_{k-1} \ar[rr] \ar[dl] && E, \ar[dl] \\
          & F(B'_{k-2})[j'_{k-2}] \ar[ul]^{[1]} && F(\nk(f))[i_k] \ar[ul]^{[1]} && F(\co(f))[j'_{k-1}] \ar[ul]^{[1]}}
}
\end{equation*}
where $Q_{k-1}[1]$ is a cone of the composite map $E \to A \to
F(\co(f))[j'_{k-1}]$. Set $j_k:=j'_{k-1}$ and $B_k:=\co(f)$. Then
we have
$$\min\{i_1,\ldots,i_k\}=\min\{j'_1,\ldots,j'_{k-1},i_k\}=\min\{j'_1,\ldots,j'_{k-2},i_k,j_k\}=j_k.$$
Now consider $Q_{k-1}$. By induction there exist
$B_1,\ldots,B_{k-1}\in\cat{A}$ and $j_1,\ldots,j_{k-1}$ integers
such that $Q_{k-1}$ can be filtered by distinguished triangles
$Q_{s-1}\to Q_s\to F(B_s[j_s])$ ($s=1,\ldots,k-1$) with $Q_0=0$
and $j_1\geq\ldots\geq j_{k-1}=\min\{j'_1,\ldots,j'_{k-2},i_k\}$.
But then $j_1\geq\ldots j_{k-1}\geq j_k$ and we get our desired
filtration.

Assume instead that $i_k>j'_{k-1}+1$. Then, as before, if $A$ is a
cone of $Q'_{k-2}\to E_k$, $A$ is an extension of $F(A_k)[i_k]$ by
$F(B'_{k-1})[j'_{k-1}]$. Since by hypothesis there are no
non-trivial extensions, $A\cong F(A_{i_k})[i_k]\oplus
F(B'_{k-1})[j'_{k-1}]$ and we can filter $E$ as
\begin{equation*}
\mbox{\small
\xymatrix{
Q'_{k-3}\ar[rr] && Q'_{k-2} \ar[rr]  \ar[dl] && Q_{k-1}\ar[rr] \ar[dl] && E \ar[dl] \\
          & F(B'_{k-2})[j'_{k-2}] \ar[ul]^{[1]} && F(A_k)[i_k] \ar[ul]^{[1]} && F(B'_{k-1})[j'_{k-1}], \ar[ul]^{[1]}}
}
\end{equation*}
where $Q_{k-1}[1]$ is a cone of the composite map $E \to A \to
F(B'_{k-1})[j'_{k-1}]$. Set $j_k:=j'_{k-1}$ and $B_k:=B'_{k-1}$.
Then we have
$$\min\{i_1,\ldots,i_k\}=\min\{j'_1,\ldots,j'_{k-1},i_k\}=\min\{j'_1,\ldots,j'_{k-2},i_k,j_k\}=j_k.$$
As in the previous case, we can now conclude by induction.
\end{proof}

Let us move to the problem of inducing stability conditions using
an exact functor.

\begin{prop}\label{pro:existence}
Let $F: \cat{T} \to \cat{T}'$ be an exact functor which satisfies
\eqref{eq:ind} and assume that $\langle F (\cat{T}) \rangle =
\cat{T'}$. Let $\sigma = (Z, \mc{P}) \in \Stab(\cat{T})$ be such
that its heart $\mc{P}((0,1])$ is of finite length with a finite
number of minimal objects. Assume furthermore that $\mc{P}((0,1])$
is $F$-admissible. Then $F_*:K(\cat{T})\to K(\cat{T'})$ is an
isomorphism. Define $\sigma' = F(\sigma)=(Z' , \mc{P}')$, where
$Z' = Z \circ F_*^{-1}$ and $\mc{P}' ((0, 1]) = \langle F(\mc{P}
((0, 1]))\rangle$. Then $\sigma'$ is a locally finite stability
condition on $\cat{T}'$. Moreover, $F^{-1} \sigma' = \sigma$.
\end{prop}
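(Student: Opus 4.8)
The plan is to run everything through the hearts $\mc{A}:=\mc{P}((0,1])$ and $\mc{A}':=\langle F(\mc{A})\rangle$ (which will be the heart $\mc{P}'((0,1])$ of $\sigma'$), the latter being the heart of a bounded $t$-structure on $\cat{T}'$ by Lemma \ref{lem:inducedtstr}, where the hypothesis $\langle F(\cat{T})\rangle=\cat{T}'$ is used. First I would record that the restriction $F|_{\mc{A}}\colon\mc{A}\to\mc{A}'$ is \emph{exact and fully faithful}. Fullness is part (iii) of $F$-admissibility. Exactness holds because a short exact sequence in $\mc{A}$ is a distinguished triangle in $\cat{T}$ with all vertices in $\mc{A}$, and $F$ carries it to a triangle with all vertices in $\mc{A}'$, hence to a short exact sequence in $\mc{A}'$. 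Faithfulness is where \eqref{eq:ind} enters: if $f\colon A\to B$ in $\mc{A}$ has $F(f)=0$, factor $f$ through its image $I=\im f$; exactness gives $F(I)=\im F(f)=0$, and \eqref{eq:ind} (in the form $F(I)\cong 0\Rightarrow I\cong 0$) forces $I=0$, so $f=0$. Being exact and faithful, $F|_{\mc{A}}$ reflects zero objects, monomorphisms and epimorphisms.

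The technical heart of the argument, and the step I expect to be the main obstacle, is to show that $F|_{\mc{A}}$ induces a \emph{bijection between the simple objects of $\mc{A}$ and those of $\mc{A}'$}. For $S$ simple in $\mc{A}$ I would prove $F(S)$ simple as follows: given a nonzero subobject $T'\hookrightarrow F(S)$ in $\mc{A}'$, the generation $\mc{A}'=\langle F(\mc{A})\rangle$ lets me filter $T'$ with subquotients in $F(\mc{A})$, so its bottom piece yields a monomorphism $F(A_1)\hookrightarrow F(S)$ with $0\neq A_1\in\mc{A}$; by fullness it equals $F(g)$ for some $g\colon A_1\to S$, which is a monomorphism (reflection of monos), hence an isomorphism by simplicity of $S$, forcing $T'=F(S)$. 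Conversely every simple $T'$ of $\mc{A}'$ receives a nonzero $F(S)\hookrightarrow T'$ from a simple subobject $S$ of the bottom piece of a filtration of $T'$, whence $T'\cong F(S)$; and $F(S)\cong F(S')$ gives $S\cong S'$ by full faithfulness. Since $\mc{A}$ has finitely many simples and every object of $\mc{A}'$ has a finite filtration with subquotients in $F(\mc{A})\cong\mc{A}$, this shows $\mc{A}'$ is of finite length with the $F(S_j)$ as its simple objects. Matching the free bases $\{[S_j]\}$ and $\{[F(S_j)]\}$ of $K(\mc{A})=K(\cat{T})$ and $K(\mc{A}')=K(\cat{T}')$ then shows $F_*$ is an isomorphism.

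With this in hand, $\sigma'$ is produced via the finite-length version of Remark \ref{rmk:tstruct}: $\mc{A}'$ is a finite-length heart, and for each simple $F(S_j)$ one has $Z'(F(S_j))=Z(S_j)\in\HH$, because the simple objects of $\mc{A}$ are themselves semistable (a simple object of $\mc{A}$ has its HN factors inside $\mc{A}$, so by simplicity it coincides with a single such factor) of phase in $(0,1]$. This yields a unique stability condition $\sigma'=(Z',\mc{P}')$ with heart $\mc{A}'$, which is locally finite since its heart has finite length.

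Finally I would verify $F^{-1}\sigma'=\sigma$. The central charges agree, as $Z'\circ F_*=Z\circ F_*^{-1}\circ F_*=Z$. For the slicings it suffices, using $\mc{P}(\phi+1)=\mc{P}(\phi)[1]$ and that $F$ commutes with shifts, to treat $\phi\in(0,1]$. For $F(\mc{P}(\phi))\subseteq\mc{P}'(\phi)$ I take $E\in\mc{P}(\phi)$, filter it in $\mc{A}$ by its Jordan--H\"older factors (all simple of the same phase $\phi$), apply the exact $F$, and use that $\mc{P}'(\phi)$ is extension-closed together with the already-established matching $F(S_j)\in\mc{P}'(\phi(S_j))$. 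For the reverse inclusion, if $F(E)\in\mc{P}'(\phi)$ I apply $F$ to the HN filtration of $E$ in $\sigma$; the forward inclusion turns it into a filtration of $F(E)$ by semistable factors with strictly decreasing phases, that is, a HN filtration of $F(E)$. By uniqueness of HN filtrations and semistability of $F(E)$ this filtration is trivial, so $E$ is semistable of phase $\phi$. Hence $\mc{P}(\phi)=\{E:F(E)\in\mc{P}'(\phi)\}$ and $F^{-1}\sigma'=\sigma$.
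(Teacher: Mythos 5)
Your argument is correct and follows essentially the same route as the paper: it identifies the minimal objects of the new heart $\langle F(\mc{P}((0,1]))\rangle$ as the images of the minimal objects of $\mc{P}((0,1])$ (your ``bottom piece of a filtration'' step is exactly the mechanism of the paper's Lemma \ref{lem:accessory1} on closure of $F(\cat{A})$ under subobjects), deduces that $F_*$ is an isomorphism, invokes the finite-length case of Remark \ref{rmk:tstruct} to build $\sigma'$, and verifies $F^{-1}\sigma'=\sigma$ via uniqueness of HN-filtrations. The only difference is expository: you re-derive the needed special case of Lemma \ref{lem:accessory1} inline and spell out the final compatibility check that the paper leaves terse.
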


\begin{proof} First of all notice that $\mc{P}' ((0, 1])$ is of finite length
and has a finite number of minimal objects, which are nothing but
the images via $F$ of the minimal objects of $\mc{P} ((0, 1])$.
Indeed, if $S\in\kp((0,1])$ is minimal and $A'\mor[l']F(S)$ is a
monomorphism, then, by Lemma \ref{lem:accessory1}, $A'\cong F(A)$
and $l'=F(l)$, for $l:A\to S$. But then, since $F$ satisfies
\eqref{eq:ind}, $l$ is a monomorphism too. Hence either $A\cong 0$
or $A\cong S$. So $F(S)$ minimal. The fact that $\mc{P}' ((0, 1])$
is generated by its minimal objects follows now from its own
definition. As a consequence, $F_* : K (\cat{T}) \to K(\cat{T}')$
is an isomorphism and the definition of $\sigma'$ has meaning.

Now the first part of the proposition follows form the previous
lemma and from Remark \ref{rmk:tstruct}, since $\mc{P}' ((0, 1])$
is an abelian category of finite length. To prove that $F^{-1}
\sigma' = \sigma$, we only have to show that if $E \in \cat{T}$ is
$\sigma$-semistable, then $F(E)$ is $\sigma'$-semistable. This
follows again from Lemma \ref{lem:accessory1}.
\end{proof}

\begin{lem}\label{lem:accessory1}
Let $F:\cat{A}\to\cat{A'}$ be a full exact functor between abelian
categories. Assume that $\langle F(\cat{A})\rangle=\cat{A'}$. Then
$F(\cat{A})$ is closed under subobjects and quotients.
\end{lem}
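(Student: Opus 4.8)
The plan is to work throughout inside $\cat{A'}$ with the essential image $\kb:=F(\cat{A})$, i.e.\ the full subcategory of objects isomorphic to $F(A)$ for some $A\in\cat{A}$, and to reduce the whole statement to one structural fact: $\kb$ is closed under the kernels, cokernels and images of its own morphisms. To prove this, I would take an arbitrary morphism $\phi\colon F(A)\to F(B)$ of $\cat{A'}$; since $F$ is full it can be written as $\phi=F(f)$ for some $f\colon A\to B$ in $\cat{A}$, and since $F$ is exact it commutes with the formation of $\nk f$, $\co f$ and $\im f$. Hence $\nk\phi\iso F(\nk f)$, $\co\phi\iso F(\co f)$ and $\im\phi\iso F(\im f)$ all lie in $\kb$. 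This is the only step where the hypotheses on $F$ enter, and it is the technical core of the proof; the rest is formal manipulation of subobjects.

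Granting this, I would first note that quotients reduce to subobjects: if $X\epi Q$ with $X\in\kb$ and subobjects of $\kb$-objects are already known to lie in $\kb$, then $\nk(X\to Q)\in\kb$ and $Q$ is the cokernel of the inclusion $\nk(X\to Q)\mono X$ of two objects of $\kb$, so $Q\in\kb$ by the structural fact above. It therefore remains to treat a subobject $Y'\mono X$ with $X\in\kb$, and here I would induct on filtration length. The hypothesis $\langle F(\cat{A})\rangle=\cat{A'}$ forces $Y'\in\langle\kb\rangle$, hence $Y'$ admits a finite filtration $0=Y_0\cont Y_1\cont\cdots\cont Y_n=Y'$ with every $Y_s/Y_{s-1}$ in $\kb$; I induct on $n$, the cases $n\leq1$ being immediate. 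For $n\geq2$, the subobject $W:=Y_{n-1}\mono X$ has a filtration of length $n-1$, so $W\in\kb$ by induction. Then $W\mono X$ is a morphism of $\kb$-objects, so $X/W\in\kb$; the subobject $Y'/W=Y_n/Y_{n-1}$ of $X/W$ lies in $\kb$, so the cokernel of $Y'/W\mono X/W$, namely $X/Y'$, lies in $\kb$ as well; and finally $Y'=\nk(X\epi X/Y')$ is the kernel of a morphism of the $\kb$-objects $X$ and $X/Y'$, whence $Y'\in\kb$.

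The step I expect to be the genuine obstacle is precisely the structural fact of the first paragraph, and it is worth stressing that condition \eqref{eq:ind} is \emph{not} assumed in this lemma. Thus one cannot argue that $\phi$ monic forces $f$ monic; the point is rather to \emph{identify} $\nk\phi$, $\co\phi$ and $\im\phi$ as the images under $F$ of objects of $\cat{A}$, which uses fullness (to obtain $f$) together with exactness (to commute $F$ with these constructions). Once this is secured, the extension-generation hypothesis $\langle F(\cat{A})\rangle=\cat{A'}$ is used only to supply the filtration that feeds the induction, and the argument closes.
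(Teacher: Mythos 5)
Your proof is correct and follows essentially the same route as the paper's: both arguments induct on the length of the extension filtration of the subobject $M\subseteq F(A)$ supplied by $\langle F(\cat{A})\rangle=\cat{A'}$, peel off the top step, and repeatedly use fullness plus exactness to recognize the successive kernels and cokernels as objects of the essential image. The only difference is organizational — you isolate the fullness-and-exactness step as a standalone closure property and make the reduction of quotients to subobjects explicit, where the paper runs the same diagram chase inline and leaves the quotient case to the reader.
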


\begin{proof}
Let $0\to M\mor[f]F(A)$, with $M\in\cat{A'}$. Since $\cat{A'}$ is
generated by $F(\cat{A})$ by extensions, there exist
$A_1,\ldots,A_k\in\cat{A}$ such that  $M$ is an extension of
$F(A_1),\ldots,F(A_k)$. We want to show that there exists
$E\in\cat{A}$ such that $M\cong F(E)$. This is enough to conclude
the proof.

We proceed by induction on $k$. For $k=0$ there is nothing to
prove. Suppose $k>0$. Let $N$ be the kernel of the morphism
$M\twoheadrightarrow F(A_k)$. Then $N$ is a subobject of $F(A)$
which is an extension of $F(A_1),\ldots,F(A_{k-1})$. By the
inductive assumption, $N\cong F(B)$, for some $B\in\cat{A}$. Hence
we have a short exact sequence
\[
0\lto F(B)\lto M\lto F(A_k)\lto 0.
\]
We have the following diagram
\begin{equation*}
\xymatrix{
    & 0\ar[d] & 0\ar[d] & 0\ar[d] & \\
    0\ar[r] & F(B)\ar[d]^{\id}\ar[r] & M\ar[d]^{f}\ar[r] & F(A_k)\ar[d]\ar[r] & 0\\
    0\ar[r] & F(B)\ar[d]\ar[r]^{g} & F(A)\ar[d]\ar[r] & \co(g)\ar[d]\ar[r] & 0\\
         & 0\ar[r] & \co(f)\ar[r]^{\id}\ar[d] & \co(f)\ar[r]\ar[d] & 0.\\
         & & 0 & 0 &
          }
\end{equation*}
Since $F$ is full, $g=F(h)$, for $h:B\to A$ and so $\co(g)\cong
F(\co(h))$. Then, again since $F$ is full, $\co(f)\cong F(L)$ for
some $L\in\cat{A}$. As a consequence, in a similar way, $M\cong
F(E)$ for some $E\in\cat{A}$.
\end{proof}

\medskip

The result in Proposition \ref{pro:existence} will be applied in a
geometric context (Section \ref{sec:CY}) where a weaker form of it
would be enough. Notice that the latter is also a consequence of
more general results in \cite{Pol} and, for the convenience of the
reader, we include the precise statement here (although it will
not be explicitly used in the rest of this paper).

Let $\cat{\widetilde{T}}$ and $\cat{\widetilde{T}'}$ be two
triangulated categories in which all small coproducts exist and
let $\cat{T}\subseteq\cat{\widetilde{T}}$ and
$\cat{T'}\subseteq\cat{\widetilde{T}'}$  be full triangulated
essentially small subcategories. Consider an exact functor
$F:\cat{\widetilde{T}}\to\cat{\widetilde{T}'}$ such that:
\begin{itemize}
\item $F$ commutes with small coproducts; \item $F$ has a left
adjoint $G:\cat{\widetilde{T}'}\to\cat{\widetilde{T}}$; \item
$F(\cat{T})\subseteq\cat{T'}$ and if $E\in\cat{\widetilde{T}}$ and
$F(E)\in\cat{T'}$, then $E\in\cat{T}$; \item
$G(\cat{T'})\subseteq\cat{T}$; \item the induced functor
$F:\cat{T}\to\cat{T'}$ satisfies condition \eqref{eq:ind}.
\end{itemize}
We can now state the following:

\begin{thm}\label{thm:Poli} {\bf (\cite{Pol})} Let $\sigma'=(Z',\kp')\in\Stab(\cat{T'})$ be such that $FG(\kp'(\phi))\subseteq\kp'((\phi,+\infty))$, for all $\phi\in\RR$. Then $F^{-1}\sigma'\in\Stab(\cat{T})$.\end{thm}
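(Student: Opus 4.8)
The plan is to check directly that $\sigma:=F^{-1}\sigma'=(Z,\kp)$ is a stability condition on $\cat{T}$. By Remark \ref{rmk:HN}(i)--(ii) the induced slicing $\kp$ already satisfies axioms (a)--(c) of Definition \ref{def:main}, and local finiteness is automatic once Harder--Narasimhan filtrations are known to exist. Hence the whole content of the theorem is the existence of HN-filtrations for every $0\neq E\in\cat{T}$, and I would concentrate entirely on producing these.

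I would first reduce the full HN-filtration of $E$ to the existence, for each real number $\phi$, of a single \emph{torsion triangle}
\[
E_{>\phi}\lto E\lto E_{\leq\phi},\qquad F(E_{>\phi})\in\kp'((\phi,+\infty)),\quad F(E_{\leq\phi})\in\kp'((-\infty,\phi]).
\]
On $\cat{T}'$ the pair $(\kp'((\phi,+\infty)),\kp'((-\infty,\phi]))$ is the $t$-structure attached to the heart $\kp'((\phi,\phi+1])$, so applying $F$ to such a triangle yields, by uniqueness of truncation triangles in $\cat{T}'$, exactly the truncation triangle of $F(E)$; in particular $F(E_{>\phi})\cong\tau_{>\phi}F(E)$. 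Since $\sigma'\in\Stab(\cat{T}')$, the object $F(E)$ has finitely many HN-factors $A_1,\dots,A_n$ with $\phi_1>\dots>\phi_n$, so only finitely many cuts are relevant: splicing the torsion triangles at $\phi=\phi_i-\epsilon$ exhibits each $A_i$ as $F(B_i)$ for some $B_i\in\cat{T}$, and then, exactly as in the proof of Lemma \ref{prop:2.6}, condition \eqref{eq:ind} together with uniqueness of HN-filtrations descends the HN-filtration of $F(E)$ through $F$ to an HN-filtration of $E$ whose factors $B_i\in\kp(\phi_i)$ are $\sigma$-semistable.

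The core step is thus the construction of the torsion triangle at a fixed cut $\phi$, and this is precisely where the left adjoint $G$ and the positivity hypothesis enter. Following \cite{Pol}, I would lift the canonical truncation of $F(E)$ to $\cat{T}$ by applying $G$ to the positive part $\tau_{>\phi}F(E)$ and then correcting iteratively, assembling $E_{>\phi}$ as a homotopy colimit built from the unit $\id\to FG$ and counit $GF\to\id$ inside the cocomplete ambient categories $\cat{\widetilde{T}}$, $\cat{\widetilde{T}'}$. The lifted objects land in $\cat{T}$ because $G(\cat{T}')\subseteq\cat{T}$ and because $F(E')\in\cat{T}'$ forces $E'\in\cat{T}$; the membership $F(E_{\leq\phi})\in\kp'((-\infty,\phi])$ is then read off after applying $F$, using that $F$ commutes with small coproducts so that the $F$-image of the homotopy colimit may be computed termwise.

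The hard part is the convergence of this inducing procedure, and it is here that the \emph{strict} inequality $FG(\kp'(\phi))\subseteq\kp'((\phi,+\infty))$ is indispensable. A single application of $G$ to $P:=\tau_{>\phi}F(E)$ does not reproduce it: the unit $P\to FGP$ realises $P$ inside an object of strictly higher phase, so one lift overshoots by an error term supported in higher phase and must be corrected. The role of strict positivity is exactly that iterating pushes this error into ever higher phases, so that in each bounded range of phases the construction stabilises, the homotopy colimit exists, and its $F$-image lies in $\kp'((\phi,+\infty))$ with quotient in $\kp'((-\infty,\phi])$. Proving rigorously that the successive corrections escape to $+\infty$ in phase and that the resulting (co)limit is again an object of $\cat{T}$ with the predicted $F$-image is the main obstacle; the reduction to a single cut, the orthogonality of the pieces (condition (c), already known for $\sigma$) and local finiteness are formal given the material recalled above.
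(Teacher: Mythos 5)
Your proposal follows essentially the same route as the paper: reduce the HN property for $\sigma=F^{-1}\sigma'$ to the existence, at each phase cut $\phi$, of a triangle $A\to E\to B$ with $F(A)\in\kp'((\phi,+\infty))$ and $F(B)\in\kp'((-\infty,\phi])$, then use uniqueness of truncations/HN-filtrations together with \eqref{eq:ind} to identify $F(A)$, $F(B)$ with the truncations of $F(E)$ and descend the filtration. The single step you flag as ``the main obstacle'' --- the convergence of the iterative lifting producing the induced $t$-structure $(\kp((\phi,+\infty)),\kp((-\infty,\phi]))$ on $\cat{T}$ --- is precisely the point at which the paper does not argue at all but invokes \cite[Thm.\ 2.1.2]{Pol}, whose hypotheses (the adjunction $G\dashv F$, compatibility with coproducts, and the strict inclusion $FG(\kp'(\phi))\subseteq\kp'((\phi,+\infty))$) are exactly the standing assumptions listed before the statement; so your argument is correct and complete once that step is closed by citation rather than by carrying out the homotopy-colimit construction yourself.
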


\begin{proof} Due to what we have already observed, it is enough to show that, in $\sigma:=F^{-1}\sigma'$, every $E\in\cat{T}$ admits a HN-filtration.

Let $A'\to F(E)\to B'$ be the last triangle in the HN-filtration
of $F(E)$ in $\sigma'$, with $A'\in\kp'((\phi,+\infty))$ and
$B'\in\kp'(\phi)$. Let $I\subseteq\RR$ be an interval. Define
\[
\kp(I):=\{C\in\cat{T}:F(C)\in\kp'(I)\}.
\]
Under our assumptions \cite[Thm.\ 2.1.2]{Pol} applies. Thus the
pair $(\kp((\phi,+\infty)),\kp((-\infty,\phi]))$ defines a
$t$-structure on $\cat{T}$. Hence there exists a triangle $A\to
E\to B$ in $\cat{T}$, where $A\in\kp((\phi,+\infty))$ and
$B\in\kp((-\infty,\phi])$. Applying the functor $F$, we get
$F(A)\to F(E)\to F(B)$ in $\cat{T'}$, with
$F(A)\in\kp'((\phi,+\infty))$ and $F(B)\in\kp'((-\infty,\phi])$.

By uniqueness of the HN-filtration, $A'\cong F(A)$ and $B'\cong
F(B)$. Hence $B\in\kp(\phi)$ and, proceeding further with $A$, we
get the existence of the HN-filtration for $E$ in
$\sigma$.\end{proof}

\bigskip

\subsection{Invariant stability conditions}\label{subsec:gen}~

\medskip

Let $X$ be a smooth projective variety over $\CC$ with an action
of a finite group $G$. We denote by $\coh_G(X)$ the abelian
category of $G$-equivariant coherent sheaves on $X$, i.e.\ the
category whose objects are pairs $(\ke,\{\lambda_g\}_{g\in G})$,
where $\ke\in\coh(X)$ and, for any $g_1,g_2\in G$,
$\lambda_{g_i}:\ke\isomor g_i^*\ke$ is an isomorphism such that
$\lambda_{g_1g_2}=g_2^*(\lambda_{g_1})\circ\lambda_{g_2}$. The set
of these isomorphisms is a \emph{$G$-linearization} of $\ke$ (very
often a $G$-linearization will be simply denoted by $\lambda$).
The morphisms in $\coh_G(X)$ are just the morphisms of coherent
sheaves compatible with the $G$-linearizations (for more details
see, for example, \cite{BL,BKR}). We put
$\Db_G(X):=\Db(\coh_G(X))$. Since $G$ is finite, when needed,
$\Db_G(X)$ can equivalently be described in terms of
$G$-equivariant objects in $\Db(X)$ (see, for example,
\cite[Sect.\ 1.1]{Pl}).

The main aim of this section is to prove Theorem \ref{thm:main2}.
To this end, consider the functors
\[
\Forg_G:\Db_G(X)\lto\Db(X)
\]
which forgets the $G$-linearization, and
\[
\Inf_G:\Db(X)\lto\Db_G(X)
\]
defined by $$\Inf_G(\ke):=\left(\bigoplus_{g\in
G}g^*\ke,\lambda_\mathrm{nat}\right),$$ where
$\lambda_\mathrm{nat}$ is the natural $G$-linearization. These
functors are adjoint (see \cite[Sect.\ 8]{BL}):
\[
\Hom_{\Db_G(X)}(\Inf_G(\ke),(\kf,\beta))=\Hom_{\Db(X)}(\ke,\Forg_G((\kf,\beta))).
\]
Since both functors are faithful, the results of Section
\ref{subsec:stabex} apply and they induce stability conditions on
$\Db_G(X)$ and $\Db(X)$.

The group $G$ acts on $\Stab(\Db(X))$ and $\NStab(\Db(X))$ in the
obvious manner via the autoequivalences $\{g^*:g:\in G\}$. Hence
we can define the subset
\[
\Gamma_X:=\{\sigma\in\Stab(\Db(X)):g^*\sigma=\sigma\mbox{, for any
}g\in G\}.
\]

\begin{lem}\label{lem:clossub} $\Gamma_X$ is a closed submanifold of $\Stab(\Db(X))$ such that the diagram
\[
\xymatrix{\Gamma_X\cap\Sigma\ar@{^{(}->}[rr]\ar[d]_{\kz|_{\Gamma_X}}&
& \Sigma\ar[d]^{\kz}\\ (V(\Sigma))_G\ar@{^{(}->}[rr]& & V(\Sigma)}
\]
commutes and $\kz|_{\Gamma_X}$ is a local homeomorphism. Here
$(-)_G$ is the $G$-invariant part and $\Sigma$ is a connected
component of $\Stab(\Db(X))$. \end{lem}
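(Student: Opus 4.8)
The plan is to establish the statement in three stages: first show that $\Gamma_X$ is closed, then that it is a submanifold via the local-homeomorphism property of $\kz$, and finally verify the commutativity of the diagram. For the closedness, I would argue that for each fixed $g \in G$, the autoequivalence $g^*$ acts continuously (indeed isometrically) on $\Stab(\Db(X))$ by Remark \ref{rmk:action}, so the fixed-point set $\{\sigma : g^*\sigma = \sigma\}$ is closed as the preimage of the diagonal under the continuous map $\sigma \mapsto (\sigma, g^*\sigma)$. Since $\Gamma_X = \bigcap_{g \in G} \{\sigma : g^*\sigma = \sigma\}$ is a finite intersection of closed sets, it is closed.

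The heart of the argument is to identify $\Gamma_X$, locally, with the preimage under $\kz$ of the $G$-invariant subspace $(V(\Sigma))_G$. The key observation is that the action of $g^*$ on $\Stab(\Db(X))$ is compatible with the linear action of $g^*$ on the central charges: if $\sigma = (Z, \kp)$ then $g^*\sigma = (Z \circ (g^*)_*^{-1}, g^*\kp)$, so at the level of central charges the map $\kz$ intertwines the two $G$-actions, giving the commutative square with the inclusion $(V(\Sigma))_G \hookrightarrow V(\Sigma)$ along the bottom. Since $\kz : \Sigma \to V(\Sigma)$ is a local homeomorphism (Theorem \ref{thm:bridmain}), I would choose, around a point $\sigma \in \Gamma_X \cap \Sigma$, an open neighborhood $U$ on which $\kz$ restricts to a homeomorphism onto an open set of $V(\Sigma)$. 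The crucial point is that on such a small neighborhood the central charge \emph{determines} the stability condition, so $g^*\sigma = \sigma$ holds if and only if the central charges agree, i.e.\ if and only if $\kz(\sigma) \in (V(\Sigma))_G$. This matching of fixed-point sets downstairs and upstairs is what makes $\kz|_{\Gamma_X}$ a local homeomorphism onto an open subset of the linear space $(V(\Sigma))_G$, endowing $\Gamma_X$ with the structure of a submanifold.

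I anticipate the main obstacle to be the faithful verification that, on a sufficiently small neighborhood, equality of central charges forces equality of the full stability conditions $g^*\sigma = \sigma$ (and not merely that they lie in the same fiber). The subtlety is that $\kz$ is only a \emph{local} homeomorphism, so one must argue that $\sigma$ and $g^*\sigma$ both lie in the \emph{same} sheet over a neighborhood of $\kz(\sigma)$; this uses that $g^*$ is an isometry, hence moves $\sigma$ a controlled distance, so that for $\sigma$ close enough to being invariant both points fall in the domain where $\kz$ is injective. Once this is in place, the injectivity of $\kz$ on $U$ upgrades equality of central charges to equality of stability conditions, closing the gap. The remaining points — that $(V(\Sigma))_G$ is a linear subspace with its induced linear topology and that the diagram commutes — are then formal consequences of the linearity of the $G$-action on $(K(\Db(X)) \otimes \CC)^\vee$ and the definition of $\kz$.
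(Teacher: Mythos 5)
Your proposal is correct and follows essentially the same route as the paper: closedness of $\Gamma_X$ as an intersection of fixed-point sets of the continuous (isometric) maps $g^*$, and then the combination of the isometry property with the local injectivity of $\kz$ to show that a stability condition near an invariant $\sigma$ whose central charge lies in $(V(\Sigma))_G$ must itself be $G$-invariant. The only cosmetic difference is that the paper packages the ``same sheet'' step you flag as the main obstacle by working on the explicit ball $\{\sigma': d(\sigma',\sigma)<1/2\}$ and citing Bridgeland's result that two stability conditions at distance $<1$ with equal central charges coincide, which is precisely the injectivity statement your argument requires.
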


\begin{proof} The subset $\Gamma_X$ is closed because $\Gamma_X=\bigcap_{g\in G}(g,\id)^{-1}(\Delta)=(\Stab(\Db(X)))_G$, where $\Delta$ is the diagonal in $\Stab(\Db(X))\times\Stab(\Db(X))$, and $g$ acts continuously on $\Stab(\Db(X))$. Consider now the map $\kz|_{\Gamma_X}:\Gamma_X\to (V(\Sigma))_G$ and let $\sigma\in\Gamma_X$. It is enough to prove that $\kz|_{\Gamma_X}$ is a local homeomorphism in a neighbourhood of $\sigma$.

Take the open subset $U:=\{\sigma'\in\Stab(\Db(X)):
d(\sigma',\sigma)<1/2\}$ where $\kz|_U$ maps onto $V\subseteq
V(\Sigma)$. Then $\Gamma_X\cap U$ goes homeomorphically into
$V\cap (V(\Sigma))_G$. Indeed, $\kz(U\cap\Gamma_X)\subseteq V\cap
(V(\Sigma))_G$ and, if $Z'\in V\cap (V(\Sigma))_G$, take
$\sigma'\in U$ such that $\kz(\sigma')=Z'$. We now have
\[
d(\sigma',g^*\sigma')\leq
d(\sigma',\sigma)+d(g^*\sigma',\sigma)=d(\sigma',\sigma)+d(\sigma',\sigma)<1,
\]
where the last equality holds since $\sigma$ is in $\Gamma_X$ and
$G$ acts by isometries on the stability manifold. Moreover
$\kz(g^*\sigma')=Z'\circ (g^*)^{-1}=Z'=\kz(\sigma')$. Applying
\cite[Lemma 6.4]{B1}, we get $\sigma'=g^*\sigma'$ so that
$\sigma'\in U\cap\Gamma_X$.
\end{proof}

The following lemma is proved in \cite{Pol} in a more general form
and it follows from Theorem \ref{thm:Poli}. For the convenience of
the reader we outline the proof in our special case. We keep the
notation of Section \ref{subsec:stabex}.

\begin{lem}\label{lem:welldef} If $\sigma=(Z,\kp)\in\Gamma_X$, then $\Forg_G^{-1}\sigma\in\Stab(\Db_G(X))$.\end{lem}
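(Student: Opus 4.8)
The plan is to reduce the statement to the existence of Harder--Narasimhan filtrations and then to obtain them by descending the HN-filtration of $\Forg_G(\ke)$ through the forgetful functor, exploiting the $G$-invariance of $\sigma$. Since $\Forg_G$ is faithful it satisfies \eqref{eq:ind}, so by Remark \ref{rmk:HN}(i) the pair $\Forg_G^{-1}\sigma=(Z,\kp)$ already verifies conditions (a)--(c) of Definition \ref{def:main}, where an object $\ke\in\Db_G(X)$ is declared semistable of phase $\phi$ precisely when $\Forg_G(\ke)\in\kp(\phi)$; moreover by Remark \ref{rmk:HN}(ii) local-finiteness will follow automatically once HN-filtrations are known to exist. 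Thus it is enough to construct, for each $0\neq\ke\in\Db_G(X)$, a HN-filtration in $\Forg_G^{-1}\sigma$.

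To this end I would start from the HN-filtration of $F(\ke):=\Forg_G(\ke)$ in $\sigma$, given by triangles $E_{i-1}\to E_i\to A_i$ with $A_i\in\kp(\phi_i)$ and $\phi_1>\cdots>\phi_n$, and lift it to $\Db_G(X)$. The key point is that the equivariant structure on $\ke$ provides isomorphisms $\lambda_g\colon F(\ke)\isomor g^*F(\ke)$, while the $G$-invariance of $\sigma$ (i.e.\ $g^*\sigma=\sigma$, see the definition of $\Gamma_X$ and Remark \ref{rmk:action}) guarantees that each autoequivalence $g^*$ preserves $\kp(\phi_i)$ and hence carries the HN-filtration of $F(\ke)$ to the HN-filtration of $g^*F(\ke)$. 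Composing with $\lambda_g$ and invoking the uniqueness of HN-filtrations up to a unique isomorphism, I would obtain canonical identifications of $g^*E_i$ with $E_i$ and of $g^*A_i$ with $A_i$; these endow every stage $E_i$ and every factor $A_i$ with a canonical $G$-linearization compatible with the triangles, so the whole filtration lifts to a filtration of $\ke$ in $\Db_G(X)$ whose factors are equivariant objects with $\Forg_G$-image $A_i\in\kp(\phi_i)$, hence semistable of phase $\phi_i$ in $\Forg_G^{-1}\sigma$. Applying $\Forg_G$ returns the original HN-filtration, confirming that this is the desired one.

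The main obstacle is precisely the descent of the linearization: one must check that the isomorphisms produced by uniqueness of HN-filtrations satisfy the cocycle identity $\lambda_{g_1g_2}=g_2^*(\lambda_{g_1})\circ\lambda_{g_2}$, so that the lifted stages and factors are genuine objects of $\Db_G(X)$ and the connecting morphisms are equivariant. Here the phrase ``unique up to a unique isomorphism'' does the essential work, since it forces these identifications to be canonical and therefore to compose correctly, making the cocycle condition automatic. As an alternative route I would instead invoke Theorem \ref{thm:Poli}, taking $F=\Forg_G$ with left adjoint $\Inf_G$ (the adjunction displayed above) and realizing $\Db_G(X)\subseteq\D(\Qcoh_G(X))$ and $\Db(X)\subseteq\D(\Qcoh(X))$ to meet the hypotheses on coproducts; the required overlap condition then reduces to the observation that $\Forg_G\Inf_G(\ke)=\bigoplus_{g\in G}g^*\ke$ lies in $\kp(\phi)$ whenever $\ke\in\kp(\phi)$, once again by the $G$-invariance of $\sigma$.
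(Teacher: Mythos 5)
Your proposal is correct and follows essentially the same route as the paper: reduce to the existence of HN-filtrations via Remark \ref{rmk:HN}, then use the $G$-invariance of $\sigma$ together with the uniqueness of HN-filtrations to transport the linearization $\lambda$ of $\ke$ to canonical isomorphisms $\beta_g$ on the stages and factors, with the cocycle condition forced by uniqueness. The paper phrases this as an induction on the last triangle of the filtration (using $\Hom(\ke',g^*\ka)=0$ to get the unique $\beta^1_g,\beta^2_g$), but the mechanism is identical, and your fallback via Theorem \ref{thm:Poli} is exactly the alternative the authors themselves point out just before the lemma.
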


\begin{proof} As we pointed out in Remark \ref{rmk:HN}, it is sufficient to show that in $\Forg_G^{-1}\sigma$ any object has a HN-filtration. So, let $(\ke,\lambda)\in\Db_G(X)$ and take
\[
\xymatrix{\ke'\ar[r]^{s}&\ke\ar[r]^{t}&\ka}
\]
to be the last triangle in the HN-filtration of
$\ke(=\Forg_G((\ke,\lambda)))\in\Db(X)$ in $\sigma$, where
$\ka\in\kp(\phi)$ and $\ke'\in\kp(>\phi)$.

Since, by assumption, for any $g\in G$, we have
$g^*\ke'\in\kp(>\phi)$ and $g^*\ka\in\kp(\phi)$, it follows that
$\Hom_{\Db(X)}(\ke',g^*\ka)=0$. Hence, for any $g\in G$,  there
exist unique morphisms $\beta^1_g:\ke'\isomor g^*\ke'$ and
$\beta^2_g:\ka\isomor g^*\ka$, making the diagram
\[
\xymatrix{\ke'\ar[d]_{\beta^1_g}\ar[rr]^{s}&&\ke\ar[d]_{\lambda_g}\ar[rr]^{t}&&\ka\ar[d]_{\beta^2_g}\\
g^*\ke'\ar[rr]^{g^*s}&&g^*\ke\ar[rr]^{g^*t}&&g^*\ka}
\]
commutative. Notice that, by uniqueness,
$\beta^i_{hg}=h^*(\beta^i_g)\circ\beta^i_h$ and $\beta_\id^i=\id$,
for any $g,h\in G$ and $i\in\{1,2\}$. Moreover, the same argument
shows that $\beta^i_g$ is an isomorphism.

Since $(\ka,\beta^2)\in\Forg_G^{-1}\kp(\phi)$, proceeding further
with the object $(\ke',\beta^1)$ we get a HN-filtration for
$(\ke,\lambda)$.\end{proof}

This allows us to introduce a map
$\Forg_G^{-1}:\Gamma_X\to\Stab(\Db_G(X))$.

\begin{prop}\label{prop:clossubman} The morphism $\Forg^{-1}_G$ just defined is continuous and the subset $\Forg^{-1}_G(\Gamma_X)$ is a closed embedded submanifold.\end{prop}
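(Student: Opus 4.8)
The plan is to obtain continuity directly from Section~\ref{subsec:stabex}, and then to exhibit $\Forg_G^{-1}$ as a homeomorphism onto a closed embedded submanifold by playing the forgetful functor against its adjoint $\Inf_G$. For continuity, note that $\Forg_G$ is faithful, hence satisfies \eqref{eq:ind}, and Lemma~\ref{lem:welldef} gives $\Gamma_X\subseteq{\rm Dom}(\Forg_G^{-1})$. Applying Lemma~\ref{prop:2.5} to $F=\Forg_G$ shows at once that $\Forg_G^{-1}\colon\Gamma_X\to\Stab(\Db_G(X))$ is continuous.

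The core of the argument is an identity relating $\Forg_G^{-1}$ to $\Inf_G^{-1}$. The functor $\Inf_G$ is also faithful, so it induces a continuous map $\Inf_G^{-1}$ on the closed set ${\rm Dom}(\Inf_G^{-1})$. I claim that for $\sigma=(Z,\kp)\in\Gamma_X$ one has $\Forg_G^{-1}\sigma\in{\rm Dom}(\Inf_G^{-1})$ and
\[
\Inf_G^{-1}\bigl(\Forg_G^{-1}\sigma\bigr)=\sigma\cdot\gamma_0,\qquad \gamma_0:=\bigl(\tfrac1{|G|}\,\id,\id\bigr)\in\glpiu .
\]
Writing $\Forg_G^{-1}\sigma=(\til Z,\til\kp)$ with $\til\kp(\phi)=\{(\ke,\lambda):\Forg_G(\ke,\lambda)\in\kp(\phi)\}$, one uses $\Forg_G\comp\Inf_G\iso\bigoplus_{g\in G}g^*$ and the fact that $\kp(\phi)$ is closed under direct sums and summands to see that $\kf\in\Db(X)$ satisfies $\Inf_G(\kf)\in\til\kp(\phi)$ iff every $g^*\kf\in\kp(\phi)$, i.e.\ (by $G$-invariance of $\sigma$) iff $\kf\in\kp(\phi)$; thus the slicing of $\Inf_G^{-1}(\Forg_G^{-1}\sigma)$ is exactly $\kp$. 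On central charges, $(\Forg_G)_*\comp(\Inf_G)_*=\sum_{g\in G}g^*$, so $\til Z\comp(\Inf_G)_*=Z\comp\sum_g g^*=|G|\,Z$ since $Z$ is $G$-invariant. This is precisely $\sigma\cdot\gamma_0$, a genuine stability condition, proving both assertions. In particular $\Forg_G^{-1}$ is injective, and since $\Inf_G^{-1}$ and $(\,\cdot\,\gamma_0^{-1})$ are continuous, the inverse of $\Forg_G^{-1}$ on its image is continuous; hence $\Forg_G^{-1}$ is a homeomorphism onto $\Forg_G^{-1}(\Gamma_X)$.

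For closedness I would argue by sequences. Let $\tau_s\to\ol\tau$ in $\Stab(\Db_G(X))$ with $\tau_s=\Forg_G^{-1}\sigma_s$, $\sigma_s\in\Gamma_X$. By Lemma~\ref{prop:2.6} the set ${\rm Dom}(\Inf_G^{-1})$ is closed, so $\ol\tau\in{\rm Dom}(\Inf_G^{-1})$. By the identity above, $\Inf_G^{-1}(\tau_s)\cdot\gamma_0^{-1}=\sigma_s$, and by continuity these converge to $\ol\sigma:=\Inf_G^{-1}(\ol\tau)\cdot\gamma_0^{-1}$; since $\Gamma_X$ is closed (Lemma~\ref{lem:clossub}), $\ol\sigma\in\Gamma_X$. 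Continuity of $\Forg_G^{-1}$ then gives $\ol\tau=\lim\Forg_G^{-1}\sigma_s=\Forg_G^{-1}\ol\sigma\in\Forg_G^{-1}(\Gamma_X)$, so the image is closed.

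Finally, for the submanifold structure I would transport the chart of Lemma~\ref{lem:clossub} through the functor. The central charge of $\Forg_G^{-1}\sigma$ is $(\Forg_G)_*\dual(Z)$, and the computation $(\Forg_G)_*\comp(\Inf_G)_*=\sum_g g^*$ shows $(\Forg_G)_*\dual$ is injective on the invariant functionals $(V(\Sigma))_G$; hence, working on a connected component, $W:=(\Forg_G)_*\dual\bigl((V(\Sigma))_G\bigr)$ is a linear subspace of the relevant $V(\til\Sigma)$. Since $\kz\comp\Forg_G^{-1}=(\Forg_G)_*\dual\comp\kz|_{\Gamma_X}$, which is a local homeomorphism onto $W$ by Lemma~\ref{lem:clossub}, and $\Forg_G^{-1}$ is a homeomorphism onto its image, the central charge map restricts to a local homeomorphism $\Forg_G^{-1}(\Gamma_X)\cap\til\Sigma\to W$, realizing $\Forg_G^{-1}(\Gamma_X)$ as a closed embedded submanifold. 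The step I expect to be the main obstacle is the central identity $\Inf_G^{-1}\comp\Forg_G^{-1}=(\,\cdot\,\gamma_0)$: one must verify carefully that composing $\Inf_G$ with forgetting does not enlarge the slicing—this is exactly where closure of $\kp(\phi)$ under summands and the $G$-invariance of $\sigma$ are used—and that the central charge is merely rescaled, so the composite stays a genuine stability condition in ${\rm Dom}(\Inf_G^{-1})$.
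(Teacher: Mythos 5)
Your proof is correct and follows essentially the same route as the paper: continuity via Lemmas \ref{lem:welldef} and \ref{prop:2.5}, the key identity $\Inf_G^{-1}\comp\Forg_G^{-1}=(\,\cdot\,\gamma_0)$ on $\Gamma_X$ (which the paper writes as $\Inf_G^{-1}\sigma'=(|G|\cdot\id_{\glpiu})\sigma$, verified by exactly the same computation on central charges and slicings using closure of $\kp(\phi)$ under summands and $G$-invariance), and closedness by pushing a convergent sequence through $\Inf_G^{-1}$ and back. Your additional details --- the explicit appeal to Lemma \ref{prop:2.6} to place the limit in ${\rm Dom}(\Inf_G^{-1})$ and the final chart argument via Lemma \ref{lem:clossub} --- only spell out steps the paper leaves implicit.
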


\begin{proof} The first part of the statement follows from Lemma \ref{lem:welldef} and Lemma \ref{prop:2.5}.

To prove the second assertion, consider the functor $\Inf_G$. If
$\sigma=(Z,\kp)\in\Gamma_X$ and $\sigma'=\Forg_G^{-1}(\sigma)$,
then
$\Inf_G^{-1}\sigma'=(|G|\cdot\id_{\glpiu})\sigma\in\Stab(\Db(X))$.
This can be easily shown by recalling that, by definition, for any
$e\in K(X)\otimes\CC$, the following series of equalities holds
true
\[
(\Inf_G^{-1}\circ\Forg_G^{-1})Z(e)=Z(\Forg_{G*}(\Inf_{G*}(e)))=Z\left(\sum_{g\in
G}g^*e\right)=|G|Z(e).
\]
Moreover, on the level of slicings, we get the following
description
\[
(\Inf_G^{-1}\circ\Forg_G^{-1})(\kp)(\phi)=\left\lbrace\ke\in\Db(X):\bigoplus_{g\in
G}g^*\ke\in\kp(\phi)\right\rbrace,
\]
for any $\phi\in\RR$. Since $\kp(\phi)$ is closed under direct
summands,
$(\Inf_G^{-1}\circ\Forg_G^{-1})(\kp)(\phi)\subseteq\kp(\phi)$. On
the other hand, the fact that $g^*\sigma=\sigma$ implies that if
$\ke\in\kp(\phi)$, then $\bigoplus_{g\in G}g^*\ke\in\kp(\phi)$.
Hence
$\Inf_G^{-1}\sigma'=(|G|\cdot\id_{\glpiu})\sigma\in\Stab(\Db(X))$.
See also \cite[Prop.\ 2.2.3]{Pol}.

Using this fact and Lemma \ref{prop:2.5}, we get that
$\Inf_G^{-1}$ is continuous in its domain and that the morphism
$\Inf^{-1}_G\circ\Forg_G^{-1}:\Gamma_X\to\Gamma_X$ is an
isomorphism. In particular $\Forg^{-1}_G(\Gamma_X)$ is an embedded
submanifold.

To prove that $\Forg^{-1}_G(\Gamma_X)$ is also closed, take a
sequence $\sigma'_n:=\Forg_G^{-1}\sigma_n\in\Stab(\Db_G(X))$
converging to $\overline\sigma'$. We have to show that
$\overline\sigma'=\Forg_G^{-1}\overline\sigma$, for some
$\overline\sigma\in\Gamma_X$.

Since $\Inf^{-1}_G\overline\sigma'\in\Stab(\Db(X))$, applying
$\Inf^{-1}_G$ to the previous sequence, we have that the sequence
$\Inf_G^{-1}\sigma'_n$ converges to $\Inf_G^{-1}\overline\sigma'$
in $\Stab(\Db(X))$. By what we have proved before, we have
$\Inf_G^{-1}\sigma'_n=(|G|\cdot\id_{\glpiu})\sigma_n\in\Gamma_X$
and so $\Inf^{-1}_G\overline\sigma'=\overline\tau\in\Gamma_X$,
because $\Gamma_X$ is closed.

In particular, $\sigma_n$ converges to
$(1/|G|\cdot\id_{\glpiu})\overline\tau$ in $\Gamma_X$. Applying
now $\Forg_G^{-1}$, we get that the sequence
$\sigma'_n=\Forg_G^{-1}\sigma_n$ converges to
$\Forg_G^{-1}\overline\sigma$. By uniqueness of limits,
$\overline\sigma'=\Forg^{-1}_G\overline\sigma$.\end{proof}

The previous results show that the subset
$\Gamma_X=(\Stab(\Db(X)))_G$ of $G$-invariant stability conditions
on $\Db(X)$ embeds as a closed submanifold into $\Stab(\Db_G(X))$
by the forgetful functor, as stated in Theorem \ref{thm:main2}.

\begin{remark}\label{rmk:numstab} All the previous arguments remain
true when dealing with numerical stability conditions, just
substituting $(K(X)\otimes\CC)\dual_G$ and $(K(X)\otimes\CC)\dual$
with $(\kn(X)\otimes\CC)_G$ and $\kn(X)\otimes\CC$. More
precisely, the Euler--Poincar\'e pairing $\chi$ is non-degenerate
and $G$-invariant. Hence it gives a canonical identification
$(\kn(X)\otimes\CC)\dual_G\cong(\kn(X)\otimes\CC)_G$. (Here
$(-)_G$ stands again for the $G$-invariant part.) Moreover, the
map $\Forg_{G*}$ factorizes in the following way
\[
\xymatrix@1{\kn(\Db_G(X))\ar@/_2pc/[rr]_{\Forg_{G*}}\ar@{>>}[r]
&(\kn(X)\otimes\CC)_G\;\ar@{^{(}->}[r]&\kn(X)\otimes\CC.}
\]
As a consequence, we also get a submanifold $\Gamma_X$ in
$\NStab(\Db(X))$ with the properties stated in Theorem
\ref{thm:main2}.\end{remark}

\bigskip

\subsection{Examples}\label{subsec:exinvar}~

\medskip

We conclude this section with a discussion of the first geometric
cases where the previous results apply.

\subsubsection{Weighted projective lines}\label{ex:projline}
Suppose $E$ is an elliptic curve carrying an action of a finite
group $G$. Since the even cohomology of $E$ must remain fixed
under this action, the standard stability condition given by slope
stability on coherent sheaves is invariant under $G$. Thus, as
$\NStab(\Db(E))$ is a $\glpiu$-orbit by \cite[Thm.\  9.1]{B1}, we
have $(\NStab(\Db(E)))_G=\NStab(\Db(E))$ and Theorem
\ref{thm:main2} implies that $\NStab(\Db(E))$ is embedded as a
closed submanifold into $\NStab(\Db_G(E))$.

Section 5.8 in \cite{GL} presents examples consisting of an elliptic curve $E$ and an involution
$\iota$ such that the category of coherent sheaves of the stack $[E/\langle\iota\rangle]$ and that of a certain weighted projective line $C$ are equivalent. These examples are studied more
recently by Ueda in \cite{U}, where he proves (at least for two special classes of these examples)
that the derived category  $\Db(\coh(C))$
is equivalent to the derived Fukaya category of the elliptic singularity associated to $E$.
As mirror symmetry predicts an isomorphism between the complex and K\"{a}hler
moduli of mirror pairs, one thus expects $\NStab(\Db_{\langle\iota\rangle}(E))=
\NStab(\Db(\coh(C)))$ to be related  to
the unfolding space of the elliptic singularity.
The embedding of stability manifolds of the previous paragraph makes this
picture seem further plausible, for $\NStab(\Db(E))$ itself has the following
geometric interpretation: $\NStab(\Db(E))/\Aut(\Db(E))$ is a $\CC^*$-bundle over the
modular curve (\cite{B1}). Of course, one could try to understand
the geometry of $\NStab(\Db_{\langle\iota\rangle}(E))$ directly by making use of the
well-known Beilinson-type result giving a complete strong exceptional collection in $\Db(\coh(C))$.

Related examples have been studied in \cite{KST}.

\subsubsection{Kummer surfaces}\label{ex:Kummer} Let $A$ be
an abelian surface and $\Km(A)$ the associated Kummer surface
(i.e.\ the minimal resolution of the quotient $A/\langle
\iota\rangle$, where $\iota:A\isomor A$ is the involution such
that $\iota(a)=-a$). In \cite{HMS} it was proved that
$\NDStab(\Db(A))$ (Example \ref{ex:K3ab}) is the unique connected
component in $\NStab(\Db(A))$ of maximal dimension equal to
$\rk\NS(A)+2$. Since $\iota^*$ acts as the identity on harmonic $2$-forms,
$\iota^*:\kn(A)\isomor\kn(A)$ is the identity. Hence $\Gamma_A$ is
open and closed in $\NStab(\Db(A))$, since
$(\kn(A))_{\langle\iota\rangle}=\kn(A)$. Consequently, if
non-empty, $\Gamma_A$ is a connected component.

Take $\sigma_{\omega,\beta}\in\NDStab(\Db(A))$, determined by the
linear function $Z_{\omega,\beta}$ and the abelian category
$\ka(\omega,\beta)$, according to Bridgeland's definition in
Example \ref{ex:K3ab}. By the above remarks $\beta$ and $\omega$
are invariant and so $Z_{\omega,\beta}$ and $\ka(\omega,\beta)$
are invariant as well. In particular,
$\iota^*\sigma_{\omega,\beta}=\sigma_{\omega,\beta}$. Thus
$\Gamma_A=\NDStab(\Db(A))$ and, by Theorem \ref{thm:main2},
$\NDStab(\Db(A))$ is realized as a closed submanifold of
$\NDStab(\Db(\Km(A)))$, the connected component of
$\NStab(\Db(\Km(A))$ defined in Example \ref{ex:K3ab}.

\subsubsection{K3 surfaces with cyclic
automorphisms}\label{ex:K3autom} Let $X$ be a K3 surface with an
automorphism $f:X\to X$ of finite order $n$. If $G\cong\ZZ/n\ZZ$
is the group generated by $f$, then the quotient $Y:=X/G$ has a
finite number of singular points. If $f^*\sigma_X=\sigma_X$, where
$H^{2,0}(X)=\langle\sigma_X\rangle$, then $f$ is a
\emph{symplectic automorphism} and the minimal crepant
desingularization $Z$ of $Y$ is a K3 surface. In this case
$\rk\kn(X)_G<\rk\kn(Z)$. Indeed, by the discussion in \cite[Sect.\
3]{Mo}, $\Pic(X)_G$ is embedded in $\Pic(Z)\otimes\CC$ and its
orthogonal complement contains the (non-trivial) sublattice
generated by the curves obtained desingularizing $Y$. So
$\Gamma_X$ is a closed submanifold of $\NStab^\dag(\Db(Z))$ of
strictly smaller dimension.

On the other hand, there are cases where $f$ is not symplectic and
$f^*|_{\NS(X)}=\id_{\NS(X)}$. Examples are given by generic
elliptic fibrations $Y$ with a section. The natural involution
$\iota:Y\to Y$ obtained by sending a point $p$ on a fiber to $-p$
on the same fiber yields an automorphism with the desired
properties (indeed $\NS(X)$ is generated by the classes of the generic
fiber and of the section which are fixed by the involution).

Notice that, due to the argument in the proof of \cite[Thm.\
3.1]{Ni1}, all the desingularizations $Z$ of the quotient $X/G$,
with $f$ non-symplectic, are such that $H^{2,0}(Z)=0$, marking a
deep difference with the symplectic case. Moreover, if $f$ is not
symplectic, reasoning as in Example \ref{ex:Kummer}, one shows
that there exists a closed embedding
\[
\NStab^\dag(\Db(X))\hookrightarrow\NStab(\Db_G(X))\hookrightarrow\Stab(\Db_G(X))\cong\Stab(\Db([X/G])).
\]
One would expect $\Db([X/G])$ to behave like the derived category
of a weighted projective space. Those categories are quite well
understood and stability conditions can be possibly constructed
using the techniques in Example \ref{ex:PN}. Hence, hopefully,
this might give a different understanding of the stability
conditions on $\Db(X)$.

\section{Enriques surfaces}\label{sec:equiv}

\bigskip

In this section we prove Theorem \ref{thm:main1}. In particular,
we show that a connected component of the stability manifold of an
Enriques surface embeds as a closed submanifold into the stability
manifold of the associated K3 surface. In Section
\ref{subsec:autoeq} we improve the derived version of the Torelli
Theorem for Enriques surfaces (see \cite{BM}) and relate the
topology of the connected component just introduced to the
description of the group of autoequivalences. Finally, in Section
\ref{subsec:GeomEnr}, we treat the case of generic Enriques
surfaces $Y$ where the distinguished connected component of the
stability manifold of the Enriques surface is isomorphic to the
connected component of the space of stability conditions on the
universal cover described by Bridgeland. We also show that the
derived category of $Y$ does not contain any spherical objects,
concluding with a description of its (strongly) rigid objects.

\bigskip

\subsection{A distinguished connected component}\label{subsec:enr}~

\medskip

Let $Y$ be an Enriques surface, $\pi:X\to Y$ its universal cover
and $\iota:X\to X$ the fixed-point-free involution such that
$Y=X/G$, where $G$ is now the group generated by $\iota$. In this
special setting, $\coh(Y)$ is naturally isomorphic to the abelian
category $\coh_G(X)$. Notice that, via this equivalence, the
canonical bundle $\omega_Y$ is identified with the $G$-equivariant
sheaf $(\ko_X,-\id)$. The equivalence $\coh(Y)\cong\coh_G(X)$,
obviously yields an equivalence $\Db(Y)\cong\Db_G(X)$, which will
be used without mention for the rest of this paper. Notice that
with this identification, $\Forg_G=\pi^*$.


Consider the connected component $\NDStab(\Db(X))\subseteq\NStab(\Db(X))$ described in \cite[Thm.\ 1.1]{B2} and Example \ref{ex:K3ab}. As a consequence of Theorem \ref{thm:main2}, we can now prove the following:

\begin{prop}\label{prop:connEnr} The non-empty subset $\Sigma(Y):=\Forg_G^{-1}(\Gamma_X\cap\NDStab(\Db(X)))$ is open and closed in $\NStab(\Db(Y))$ and it is embedded as a closed submanifold into $\NDStab(\Db(X))\subseteq\NStab(\Db(X))$ via the functor $\Inf_G$. Moreover, the diagram
\begin{eqnarray}\label{eqn:commdiagr}
\xymatrix@1{\Gamma_X\cap\NDStab(\Db(X))\ar[d]\ar[r]^(.65){\Forg^{-1}_G}&\Sigma(Y)\ar[d]_{\kz}\ar[r]^(.35){\Inf_G^{-1}}&\Gamma_X\cap\NDStab(\Db(X))\ar[d]\\
(\kn(X)\otimes\CC)\dual_G\ar[r]^{\Forg\dual_{G*}}&(\kn(Y)\otimes\CC)\dual\ar[r]^{\Inf\dual_{G*}}&(\kn(X)\otimes\CC)\dual_G}
\end{eqnarray}
commutes.\end{prop}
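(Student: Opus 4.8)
The plan is to assemble the statement from the machinery already established, since all the genuinely new analytic content sits in Theorem \ref{thm:main2} and its proof (Lemmas \ref{lem:clossub}--\ref{prop:clossubman}). First I would observe that the key special feature here is the one recorded in Section \ref{subsec:enr}: under $\coh(Y)\cong\coh_G(X)$ one has $\Forg_G=\pi^*$, and $G=\langle\iota\rangle$ is generated by the fixed-point-free involution. The whole proposition is then an instance of Theorem \ref{thm:main2} applied to this $X$ and $G$, intersected with the distinguished Bridgeland component $\NDStab(\Db(X))$ inside $\NStab(\Db(X))$; so the first task is simply to check that the pieces are compatible with that intersection.

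For nonemptiness and the open/closed claim I would argue as follows. By Lemma \ref{lem:clossub}, $\Gamma_X=(\NStab(\Db(X)))_G$ is a closed submanifold, and by Lemma \ref{lem:welldef} every $\sigma\in\Gamma_X$ induces $\Forg_G^{-1}\sigma\in\Stab(\Db_G(X))=\Stab(\Db(Y))$; restricting to numerical stability conditions (Remark \ref{rmk:numstab}) keeps us inside $\NStab(\Db(Y))$. To see $\Gamma_X\cap\NDStab(\Db(X))$ is nonempty and that $\Sigma(Y)$ is open and closed, I would exhibit an explicit invariant stability condition: take $\sigma_{\omega,\beta}\in\NDStab(\Db(X))$ as in Example \ref{ex:K3ab} with $\omega,\beta$ chosen $\iota^*$-invariant (possible since the involution acts on $\NS(X)$), exactly as in Example \ref{ex:Kummer}; then $\iota^*\sigma_{\omega,\beta}=\sigma_{\omega,\beta}$, so the intersection is nonempty. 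Openness and closedness of $\Sigma(Y)$ in $\NStab(\Db(Y))$ then follow because $\Forg_G^{-1}$ is a homeomorphism onto its image of a subset that is open and closed in $\NStab(\Db(X))$: Proposition \ref{prop:clossubman} already gives that $\Forg_G^{-1}(\Gamma_X)$ is a closed embedded submanifold and that $\Inf_G^{-1}\circ\Forg_G^{-1}$ is an isomorphism onto $\Gamma_X$; one only needs that intersecting with $\NDStab(\Db(X))$, a connected component, preserves the open/closed property, which is immediate.

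For the closed-embedding-via-$\Inf_G$ assertion I would invoke Proposition \ref{prop:clossubman} verbatim: the composite $\Inf_G^{-1}\circ\Forg_G^{-1}\colon\Gamma_X\to\Gamma_X$ is the isomorphism $(|G|\cdot\id_{\glpiu})$ computed there, so $\Inf_G$ realizes $\Sigma(Y)$ as the closed submanifold $\Gamma_X\cap\NDStab(\Db(X))$ of $\NDStab(\Db(X))$. Finally, commutativity of diagram \eqref{eqn:commdiagr} is a purely linear-algebra check on central charges: the left square commutes because $Z\mapsto Z\circ\Forg_{G*}$ is exactly the definition of $\Forg_G^{-1}$ on central charges, compatibly with the factorization of $\Forg_{G*}$ through $(\kn(X)\otimes\CC)_G$ recorded in Remark \ref{rmk:numstab}; the right square is the dual statement for $\Inf_G$, using $\Forg_{G*}\circ\Inf_{G*}=|G|\cdot\id$ from the same computation in Proposition \ref{prop:clossubman}. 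I expect the only point requiring genuine care — the main obstacle — to be bookkeeping in the numerical setting: one must verify that the identification $(\kn(X)\otimes\CC)\dual_G\cong(\kn(X)\otimes\CC)_G$ via the nondegenerate $G$-invariant Euler pairing (Remark \ref{rmk:numstab}) is the one that makes both squares commute simultaneously, so that $\Forg^\vee_{G*}$ and $\Inf^\vee_{G*}$ in the bottom row are genuinely adjoint/dual to the maps used upstairs. Once that identification is pinned down, every square in \eqref{eqn:commdiagr} commutes by construction.
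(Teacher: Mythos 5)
Your outline is right about non-emptiness (invariant $\omega,\beta$ via Horikawa, as in Example \ref{ex:Kummer}), about closedness of $\Sigma(Y)$ (Theorem \ref{thm:main2} and Proposition \ref{prop:clossubman}), about the embedding via $\Inf_G$, and about the diagram. But the one genuinely non-trivial assertion of the proposition is that $\Sigma(Y)$ is \emph{open} in $\NStab(\Db(Y))$, and your argument for that has a real gap. You claim openness follows ``because $\Forg_G^{-1}$ is a homeomorphism onto its image of a subset that is open and closed in $\NStab(\Db(X))$''. There are two problems. First, $\Gamma_X\cap\NDStab(\Db(X))$ is in general \emph{not} open in $\NStab(\Db(X))$: by Lemma \ref{lem:clossub} it is a closed submanifold locally modelled on $((\kn(X)\otimes\CC)\dual)_G$, which has dimension $12$ (the $\iota^*$-invariant part of $\NS(X)$ has rank $10$ for every Enriques quotient), whereas $\dim(\kn(X)\otimes\CC)=\rho(X)+2\geq 12$ with equality only in the generic case; for $\rho(X)>10$ the invariant locus has positive codimension and cannot be open. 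Second, even if the source were open and closed, a homeomorphism onto its image says nothing about that image being open in the ambient target $\NStab(\Db(Y))$; one still needs an invariance-of-domain type argument, i.e.\ an equality of dimensions.

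The missing ingredient is precisely the dimension count that carries the proof: $\Inf_{G*}=\pi_*$ restricts to an isomorphism $(\kn(X)\otimes\CC)_G\isomor\kn(Y)\otimes\CC$ (this is \cite[Lemma 3.1]{Mo} and uses the specific geometry of the Enriques quotient), hence $\Forg_{G*}$ is an isomorphism as well by Remark \ref{rmk:numstab}, and therefore $\dim\Sigma(Y)=\dim(\Gamma_X\cap\NDStab(\Db(X)))=\dim(\kn(X)\otimes\CC)_G=\dim(\kn(Y)\otimes\CC)=\dim\NStab(\Db(Y))$. A closed embedded submanifold of full dimension in a complex manifold is open and closed, which is exactly the claim. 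You do use the identity $\Forg_{G*}\circ\Inf_{G*}=|G|\cdot\id$ for the commutativity of \eqref{eqn:commdiagr}, but you never invoke the surjectivity of $\pi_*$ onto $\kn(Y)\otimes\CC$, and that is where the openness actually comes from. Apart from this step, your proposal follows the same route as the paper.
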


\begin{proof} Consider the map $\Forg_G^{-1}:\Gamma_X\to\NStab(\Db(Y))$. Notice that $\Gamma_X$ is non-empty. Indeed, by \cite{Ho1, Ho2}, we can choose $\beta$ and $\omega$ as in Example \ref{ex:K3ab} which are invariant for the action of $\iota^*$. So $\iota^*\sigma_{\omega,\beta}=\sigma_{\omega,\beta}$ as in Example \ref{ex:Kummer}.

By Theorem \ref{thm:main2}, $\Forg_G^{-1}(\Gamma_X\cap\NDStab(\Db(X)))$ is closed. On the other hand, notice that $\Inf_{G*}:(\kn(X)\otimes\CC)_G\to\kn(Y)\otimes\CC$ is an isomorphism. Indeed $\Inf_{G*}=\pi_*$ and then the result follows, for example, from \cite[Lemma 3.1]{Mo}. Hence $\Forg_{G*}$ is an isomorphism as well by Remark \ref{rmk:numstab} and $$\dim(\Gamma_X\cap\NDStab(\Db(X)))=\dim(\kn(Y)\otimes\CC)=\dim(\Forg^{-1}_G(\Gamma_X\cap\NDStab(\Db(X)))).$$
Thus $\Forg^{-1}_G(\Gamma_X\cap\NDStab(\Db(X)))$ has the desired property and the diagram in the statement commutes by definition.\end{proof}

We denote by $\NStab^\dag(\Db(Y))$ the (non-empty) connected component of $\Sigma(Y)$ containing the images via $\Forg_G^{-1}$ of the stability conditions $(Z_{\omega,\beta},\ka(\omega,\beta))$ defined in Example \ref{ex:K3ab}, with $G$-invariant $\omega,\beta\in\NS(X)\otimes\QQ$. (By \cite[Prop.\ 11.2]{B2}, it is not difficult to see that the $G$-invariant stability conditions in Example \ref{ex:K3ab} are contained in a $G$-invariant connected subset of $\NStab^\dag(\Db(X))$.)

\begin{remark}\label{rmk:Bri} (i) It is perhaps worth noticing that $\NStab^\dag(\Db(Y))$ could be alternatively obtained by repeating the same construction as in \cite{B2} for Enriques surfaces. Since we will not need this in the sequel, the easy check is left to the reader.

(ii) It is not difficult to see that the functor $(-)\otimes\omega_Y$ preserves $\Sigma(Y)$.\end{remark}

\begin{ex}\label{ex:Keum} Take two non-isogenous elliptic curves $E_1$ and $E_2$ and choose two order-$2$ points $e_1\in E_1$ and $e_2\in E_2$. As remarked in \cite[Ex.\ 3.1]{K}, the abelian surface $A:=E_1\times E_2$ has an involution $\iota$ defined by
\[
\iota:(z_1,z_2)\longmapsto(-z_1+e_1,z_2+e_2).
\]
Notice that, since $\NS(A)$ is generated by the elliptic curves $E_1$ and $E_2$, $\iota^*|_{\NS(A)}=\id_{\NS(A)}$ while $\iota$ acts freely on the subgroup of order-$2$ points of $A$. Hence the induced involution $\tilde\iota:\Km(A)\to\Km(A)$ has no fixed points and the $\CC$-linear extension of $\tilde\iota$ restricted to the vector space $\NS(A)\otimes\CC\subset\NS(\Km(A))\otimes\CC$ is the identity.

If $Y$ is the Enriques surface $\Km(A)/\langle\tilde\iota\rangle$, combining Proposition \ref{prop:connEnr} and Example \ref{ex:Kummer} we obtain a connected component $\NStab^\dag(\Db(Y))\subseteq\NStab(\Db(Y))$ and embeddings
\[
\NStab^\dag(\Db(A))\hookrightarrow\NStab^\dag(\Db(Y))\hookrightarrow\NStab^\dag(\Db(\Km(A)))
\]
of closed submanifolds. More generally, a result of Keum \cite[Thm.\ 2]{K} shows that any Kummer surface is the universal cover of an Enriques surface.\end{ex}

\bigskip

\subsection{The group of autoequivalences}\label{subsec:autoeq}~

\medskip

As pointed out by Bridgeland in \cite{B2} for K3 surfaces, the
knowledge of some topological features of a special connected
component of the manifold parametrizing stability conditions can
give important information about the group of autoequivalences of
the derived category.

In this section we want to carry out the same strategy for
Enriques surfaces. The first step consists in proving a derived
version of the Torelli Theorem for Enriques surfaces first stated
in \cite{Ho1,Ho2} (see also \cite{BPV}). Notice that for K3
surfaces a Derived Torelli Theorem for K3 surfaces is already
present in the literature (see \cite{Or1} for the main result and
\cite{HLOY,Hu1,HS1,P} for further refinements). Similarly, for
Enriques surfaces, the following derived version of the classical
Torelli Theorem is available:

\begin{prop}\label{prop:BM}{\bf (\cite{BM})}
Let $Y_1$ and $Y_2$ be Enriques surfaces and let $X_1$ and $X_2$
be the universal covers endowed with the involutions $\iota_1$ and
$\iota_2$.

{\rm (i)} Any equivalence $\Db(Y_1)\cong\Db(Y_2)$ induces an
orientation preserving equivariant Hodge isometry $\widetilde
H(X_1,\ZZ)\cong\widetilde H(X_2,\ZZ)$.

{\rm (ii)} Any orientation preserving equivariant Hodge isometry
$\widetilde H(X_1,\ZZ)\cong\widetilde H(X_2,\ZZ)$ lifts to an
equivalence $\Db(Y_1)\cong\Db(Y_2)$.\end{prop}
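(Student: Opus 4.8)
The plan is to reduce the statement about Fourier--Mukai equivalences of Enriques surfaces to the corresponding statement for their K3 universal covers, for which a Derived Torelli Theorem is already available in the literature (Orlov's theorem, as refined in \cite{HLOY,Hu1,HS1,P}). The crucial bridge is the identification $\Db(Y_i)\cong\Db_{G_i}(X_i)$ and the observation that equivalences of equivariant derived categories are controlled by equivariant equivalences downstairs, since $G_i=\langle\iota_i\rangle$ is generated by the covering involution and $\omega_{Y_i}$ corresponds to the linearized sheaf $(\ko_{X_i},-\id)$.

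\medskip

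For part (i), I would start with an equivalence $\Phi:\Db(Y_1)\isomor\Db(Y_2)$, regarded as an equivalence $\Db_{G_1}(X_1)\isomor\Db_{G_2}(X_2)$. The first step is to \emph{lift} $\Phi$ to an equivalence $\widetilde\Phi:\Db(X_1)\isomor\Db(X_2)$ of the underlying K3 surfaces commuting appropriately with the involutions; this is exactly the kind of descent/induction statement for equivariant Fourier--Mukai kernels worked out in the equivariant setting (one passes from a $(G_1\times G_2)$-linearized kernel on $X_1\times X_2$ to its forgotten version, checking it is still a kernel of an equivalence). Once $\widetilde\Phi$ is in hand, Orlov's Derived Torelli Theorem for K3 surfaces produces a Hodge isometry $\widetilde H(X_1,\ZZ)\isomor\widetilde H(X_2,\ZZ)$, and I would verify that it is \emph{equivariant} with respect to $\iota_1^*,\iota_2^*$ (this follows from the compatibility of $\widetilde\Phi$ with the involutions) and \emph{orientation preserving} (using the results of \cite{HLOY,HS1} that every K3 equivalence preserves the natural orientation of the four positive directions in the Mukai lattice).

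\medskip

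For part (ii), the direction is reversed: given an orientation-preserving equivariant Hodge isometry $g:\widetilde H(X_1,\ZZ)\isomor\widetilde H(X_2,\ZZ)$, the refined Derived Torelli Theorem for K3's guarantees a Fourier--Mukai equivalence $\widetilde\Phi:\Db(X_1)\isomor\Db(X_2)$ inducing $g$ on cohomology. The task is then to \emph{promote} $\widetilde\Phi$ to an equivariant equivalence, i.e.\ to linearize its kernel with respect to $G_1\times G_2$, thereby descending it to an equivalence $\Db(Y_1)\cong\Db(Y_2)$. Here the equivariance of $g$ is what ensures $\widetilde\Phi$ intertwines $\iota_1^*$ and $\iota_2^*$ up to isomorphism of functors, which is the obstruction-free input needed to choose a linearization of the kernel.

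\medskip

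The main obstacle, in both directions, is the linearization step: passing between an equivalence of the K3 categories and an equivalence of the equivariant (Enriques) categories is not a formal consequence of abstract category theory but requires a careful analysis of Fourier--Mukai kernels and their $G$-linearizations. In particular, one must check that the isomorphism of functors $\widetilde\Phi\comp\iota_1^*\iso\iota_2^*\comp\widetilde\Phi$ can be chosen so that the induced descent datum is genuine (satisfying the cocycle condition), and that no cohomological obstruction in $H^1(G,\cdot)$ prevents descent; because $G\cong\ZZ/2\ZZ$ and the relevant groups of automorphisms of the kernel are well-controlled, this can be pushed through, but it is the technical heart of the argument. Since this is precisely the content of \cite{BM}, I would organize the proof as a citation of Proposition \ref{prop:BM}, indicating the above as the conceptual skeleton rather than reproving it in full.
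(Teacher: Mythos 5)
The paper does not actually prove this Proposition: it is stated as a quotation from \cite{BM}, and the version with ``orientation preserving'' is only established later, as Corollary \ref{cor:dtt}, by transporting the explicit construction in the proof of Proposition \ref{thm:DTT} to the two-surface setting. So ending with a citation is consistent with the paper; but since you offer a skeleton, the skeleton has to be assessed, and part (ii) of it contains a genuine gap. You argue: take \emph{any} Fourier--Mukai equivalence $\widetilde\Phi:\Db(X_1)\isomor\Db(X_2)$ inducing the given equivariant isometry $g$ (refined Derived Torelli), then claim that ``the equivariance of $g$ ensures $\widetilde\Phi$ intertwines $\iota_1^*$ and $\iota_2^*$ up to isomorphism of functors.'' That implication fails: equivariance of $g$ only says that $\widetilde\Phi$ and $\iota_2^{*-1}\circ\widetilde\Phi\circ\iota_1^*$ induce the \emph{same} map on the Mukai lattices, and since the homomorphism $\Aut(\Db(X))\to\OO(\widetilde H(X,\ZZ))$ has a large kernel (it contains, e.g., squares of spherical twists and $(-)\otimes\omega$-type phenomena), two equivalences with the same cohomological action need not be isomorphic. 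The descent/linearization problem you flag is therefore not the first obstacle; one must first produce a lift that commutes with the involutions \emph{as a functor}. This is exactly why the paper's proof of Proposition \ref{thm:DTT} is a three-case construction assembling the equivalence from pieces that visibly commute with $\iota^*$: tensorization by $G$-invariant line bundles, compositions $T_{\ko_{C_i}(-1)}\circ T_{\ko_{\iota^*C_i}(-1)}$ along $\iota$-conjugate $(-2)$-curves, pushforward along equivariant isomorphisms supplied by the classical Torelli theorem, and universal families on moduli spaces $M_h(v)$ which are shown (the Claim in Case 2) to carry an induced fixed-point-free involution making the kernel equivariant. Your plan can be repaired only by replacing ``take any $\widetilde\Phi$ inducing $g$'' with such an explicit construction, or by controlling the kernel of $\Aut(\Db(X))\to\OO(\widetilde H(X,\ZZ))$ -- which is not known.

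Two smaller points. First, in part (i) the assertion that the induced Hodge isometry is orientation preserving is not available from \cite{HLOY} or \cite{HS1}, which treat only particular classes of equivalences; the statement that \emph{every} equivalence of projective K3 surfaces preserves the orientation is the main result of \cite{HMS1}, and the paper is explicit that without this input one only obtains the weakened form (the unnumbered Proposition following Corollary \ref{cor:dtt}, where the image merely \emph{contains} the index-$2$ subgroup). Second, your descent count for the kernel is slightly off: a kernel for a functor $\Db(Y_1)\to\Db(Y_2)$ lives in $\Db_{G_1\times G_2}(X_1\times X_2)$, whereas an isomorphism $(\iota_1\times\iota_2)^*\ke\cong\ke$ only linearizes $\ke$ with respect to the diagonal subgroup $G_\Delta$; one must then induce up (apply $\Inf$, i.e.\ push forward along $\pi_1\times\pi_2$) and check that the resulting functor is still an equivalence, which is the $2{:}1$ correspondence of \cite{Pl} used in the proof of Proposition \ref{thm:DTT}.
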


Recall that, for a K3 surface $X$, the Mukai lattice $\widetilde
H(X,\ZZ)$ of X is the total cohomology group $H^*(X,\ZZ)$ endowed
with the Hodge and lattice structure defined, for example, in
\cite[Sect.\ 10.1]{Hu1}. The lattice $\widetilde H(X,\ZZ)$ has
signature $(4,20)$ and an isometry of $\widetilde H(X,\ZZ)$ is
\emph{orientation preserving} if it preserves the orientation of
the four positive directions in $\widetilde H(X,\RR)$. We will
denote by $\OO(\widetilde H(X,\ZZ))$ (respectively
$\OO_+(\widetilde H(X,\ZZ))$) the group of Hodge isometries
(respectively orientation preserving Hodge isometries) of
$\widetilde H(X,\ZZ)$. The subgroups consisting of the equivariant
isometries are denoted by $\OO(\widetilde H(X,\ZZ))_G$ and
$\OO_+(\widetilde H(X,\ZZ))_G$. Obviously, $G$ is naturally a
normal subgroup of $\OO(\widetilde H(X,\ZZ))_G$.

The aim of this section is to improve Proposition \ref{prop:BM}
giving a description of the Hodge isometries induced by all
possible Fourier--Mukai equivalences. As a first step, the
following result, partially relying on \cite{HMS1}, gives a first
description of the group of autoequivalences of an Enriques
surface.

\begin{prop}\label{thm:DTT} Let $Y$ be an Enriques surface and let $X$ and $G$ be as above. There
exists a natural morphism of groups
\[
\Pi:\Aut(\Db(Y))\lto\OO(\widetilde H(X,\ZZ))_G/G
\]
whose image is the index-2 subgroup $\OO_+(\widetilde H(X,\ZZ))_G/G$.
\end{prop}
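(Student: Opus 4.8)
The plan is to construct the homomorphism $\Pi$ by composing two maps whose existence we already have at hand, and then to pin down its image by a careful analysis of the equivariant structure. Let me lay out the strategy.

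=== PROOF PROPOSAL ===

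The plan is to build $\Pi$ in two stages and then compute its image by combining the derived Torelli statement of Proposition \ref{prop:BM} with a lifting argument for equivariant isometries. First I would produce the map itself. Given $\Phi\in\Aut(\Db(Y))\cong\Aut(\Db_G(X))$, the idea is to lift $\Phi$ to an autoequivalence $\widetilde\Phi$ of $\Db(X)$ compatible with the $G$-action, using the relationship $\Forg_G=\pi^*$ and the adjunction with $\Inf_G$ recorded in Section \ref{subsec:gen}. Such a lift is not canonical, but the induced cohomological Fourier--Mukai transform $\widetilde\Phi^H\in\OO(\widetilde H(X,\ZZ))$ is $G$-equivariant, hence lands in $\OO(\widetilde H(X,\ZZ))_G$; the ambiguity in the choice of lift is precisely the action of $G=\langle\iota\rangle$ on cohomology (two lifts differ by tensoring with $\omega_Y$, i.e.\ by $\iota^*$ on $\widetilde H(X,\ZZ)$). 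This is exactly why the target is the quotient $\OO(\widetilde H(X,\ZZ))_G/G$, and it shows $\Pi$ is well defined and a group homomorphism.

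Next I would prove that the image is contained in the orientation-preserving subgroup $\OO_+(\widetilde H(X,\ZZ))_G/G$. This is the content of part (i) of Proposition \ref{prop:BM}: any equivalence $\Db(Y_1)\cong\Db(Y_2)$, specialized to $Y_1=Y_2=Y$, induces an orientation-preserving equivariant Hodge isometry of $\widetilde H(X,\ZZ)$. Since $\iota^*$ itself is orientation preserving (it acts as a Hodge isometry fixing the positive four-space up to orientation—this should be checked, but it follows from $\iota$ being fixed-point-free of order two on a K3 surface, so that it preserves the holomorphic two-form up to sign and acts suitably on $H^0\oplus H^4$), the subgroup $G$ sits inside $\OO_+(\widetilde H(X,\ZZ))_G$, and passing to the quotient the image of $\Pi$ lands in $\OO_+(\widetilde H(X,\ZZ))_G/G$.

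The reverse inclusion—surjectivity onto the index-2 subgroup—is the main point, and here I would invoke part (ii) of Proposition \ref{prop:BM}: every orientation-preserving equivariant Hodge isometry lifts to an equivalence $\Db(Y)\cong\Db(Y)$, i.e.\ to an element of $\Aut(\Db(Y))$. This gives surjectivity of $\Pi$ onto $\OO_+(\widetilde H(X,\ZZ))_G/G$ directly, provided one checks that the lift produced in \cite{BM} is compatible with our construction of $\Pi$ (that the cohomological action of the lift is the prescribed isometry modulo $G$). The remaining assertion, that $\OO_+$ is genuinely of index $2$ in $\OO$, should be extracted from the structure theory of the Mukai lattice: the full group $\OO(\widetilde H(X,\ZZ))_G$ contains orientation-reversing isometries, and since orientation preservation is a homomorphism to $\{\pm1\}$ whose restriction to the equivariant subgroup is surjective, the kernel has index exactly $2$.

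The step I expect to be the main obstacle is the well-definedness and equivariance of the lift $\widetilde\Phi$: showing that an autoequivalence of $\Db_G(X)$ descends/ascends to a $G$-equivariant autoequivalence of $\Db(X)$ whose cohomological realization is well defined modulo the $G$-action, and that orientation behaviour is correctly tracked. This requires handling the non-canonical nature of the lift and verifying that the two sources of ambiguity (choice of linearization and tensoring by $\omega_Y$) both collapse to the single factor $G$ in the quotient. The orientation bookkeeping and the interplay with \cite{HMS1} are where the argument is most delicate.
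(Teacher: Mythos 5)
Your construction of $\Pi$ is in the right spirit and matches the paper's: the paper makes the lifting precise via Ploog's equivariant kernels, obtaining a surjective homomorphism $\mathrm{Lift}:\Aut(\Db_G(X))\to\Aut(\Db(X))_G/G$ with kernel $\langle(-)\otimes\omega_Y\rangle$, so that the only residual ambiguity is the factor $G$, exactly as you describe. The problem lies in how you determine the image.

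Both of your key steps outsource the actual content to Proposition \ref{prop:BM}, and this is circular with respect to the logical structure of the paper. The result of \cite{BM} gives the existence of \emph{some} equivariant Hodge isometry attached to an equivalence, and a lifting statement, but it does \emph{not} contain the orientation refinement: that refinement (stated as Corollary \ref{cor:dtt}) is deduced \emph{from} the proof of the present proposition, not the other way around. Concretely: (a) the containment $\mathrm{im}(\Pi)\subseteq\OO_+(\widetilde H(X,\ZZ))_G/G$ is not available from \cite{BM}; the paper obtains it from \cite[Cor.\ 3]{HMS1} (that $\Aut(\Db(X))\to\OO(\widetilde H(X,\ZZ))$ has image exactly $\OO_+$), and explicitly records that without \cite{HMS1} the proposition must be weakened to ``the image \emph{contains} $\OO_+/G$.'' (b) Surjectivity onto $\OO_+(\widetilde H(X,\ZZ))_G/G$ is the hard part and cannot be cited away: the paper proves it by running the entire Derived Torelli argument for K3 surfaces \emph{equivariantly}. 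This requires, in Case 1, choosing an invariant ample class and $(-2)$-curves in $\iota$-orbits $C_i,\iota^*C_i$ so that the corresponding spherical twists commute and assemble into a $G$-equivariant autoequivalence; in Case 2, a genuinely nontrivial geometric Claim that the moduli space $M_h(v)$ of stable sheaves with $\iota$-invariant primitive Mukai vector $v$ inherits a \emph{fixed-point-free} involution making the universal family equivariant; and in Case 3 a reduction to Case 2. None of this is supplied, or even signalled, by your appeal to Proposition \ref{prop:BM}(ii). You do correctly flag that compatibility of the \cite{BM} lift with your $\Pi$ would need checking, but the real missing idea is the equivariant lifting construction itself.
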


\begin{proof} The easy part is defining the morphism $\Pi$ in the statement. Consider the following set of objects
\[
\KE(\Db(X)):=\{(\kg,\lambda)\in\Db_{G_\Delta}(X\times X):\Phi_\kg\in\Aut(\Db(X))\}
\]
and take $\Aut(\Db(X))_G:=\{\Phi\in\Aut(\Db(X)):\iota^*\circ\Phi\circ\iota^*\cong\Phi\}$. Here $G_\Delta$ is the group generated by the involution $\iota\times\iota$.

As pointed out in \cite[Sect.\ 3.3]{Pl}, the functors
$\Forg_{G_\Delta}$ and $\Inf_{G_\Delta}$ are $2:1$, so that we
have the following diagram
\[
\xymatrix{ &\KE(\Db(X))\ar[dl]_{\Forg_{G_\Delta}}\ar[dr]^{\Inf_{G_\Delta}}& \\ \Aut(\Db(X))_G & &\Aut(\Db_G(X))=\Aut(\Db(Y)).}
\]
Observe that $\Forg_{G_\Delta}$ and $\Inf_{G_\Delta}$ can be thought of as group homomorphisms, since $\KE(\Db_G(X))$ has a natural group structure given by the composition of Fourier--Mukai kernels (see \cite[Sect.\ 2]{Pl}). This yields a natural surjective homomorphism
\[
\mathrm{Lift}:\Aut(\Db_G(X))\lto\Aut(\Db(X))_G/G.
\]
which, composed with the natural map $\Aut(\Db(X))_G/G\to\OO(\widetilde H(X,\ZZ))_G/G$ (see \cite{Or1} or \cite[Prop.\ 10.10]{Hu1}), gives the desired morphism $\Pi$. Notice that an easy computation shows that the morphism $\mathrm{Lift}$ is $2:1$ and the kernel is the group $\langle(-)\otimes\omega_Y\rangle$. In particular we have an isomorphism
\begin{equation}\label{eqn:iso1}
\Aut(\Db(Y))/\langle(-)\otimes\omega_Y\rangle\cong\Aut(\Db(X))_G/G.
\end{equation}

We first prove that $\OO_+(\widetilde H(X,\ZZ))_G/G$ is a subgroup of the image of $\Pi$ of index at most $2$. To this end define $j$
to be the Hodge isometry
$j:=-\id_{H^2(X,\ZZ)}\oplus\id_{H^0(X,\ZZ)\oplus H^4(X,\ZZ)}$ and
take an isometry $\psi\in\OO(\widetilde H (X,\ZZ))_G$. We want to
show that (up to composing $\psi$ with $j$), there exists
$\Phi\in\Aut(\Db(Y))$ such that $\Pi(\Phi)=\psi$. The proof is
divided up in many steps and, besides some key parts which will be
stressed, it is very similar to the one of the the Derived Torelli
Theorem for K3 surfaces (see, for example, \cite[Cor.\
10.12]{Hu1}).

Given an equivalence $\Phi\in\Aut(\Db(X))$, we write $\Phi^H$ for the natural Hodge isometry induced on the Mukai lattice (\cite[Ch.\ 10]{Hu1}).

\bigskip

\noindent{\sc Case 1.} Suppose that $\psi(0,0,1)=\pm (0,0,1)$.

\medskip

Under this hypothesis $\pm \psi(1,0,0)=(r,\ell,s)=:v$ with $r=1$. Hence $v=\exp(\mathrm{c}_1(L))$ for some line bundle $L$ on $X$. If $\psi(0,0,1)=(0,0,1)$ take the  equivalence $\Phi=(-)\otimes L\dual$ (if $\psi(0,0,1)=-(0,0,1)$ just consider $\Phi=(-)\otimes L\dual[1]$). Since $\psi$ is $G$-equivariant, $\iota^*v=v$ and thus $L$ and $\Phi$ are $G$-equivariant as well. The composition $\varphi=\Phi^H\circ\psi$ has the property that $\varphi|_{H^0(X,\ZZ)\oplus H^4(X,\ZZ)}=\id_{H^0(X,\ZZ)\oplus H^4(X,\ZZ)}$.

Since $X$ is the universal cover of $Y$, there exists an ample line bundle $L'\in\Pic(X)$ such that $\iota^*L'\cong L'$. By \cite[Prop.\ VIII.21.1]{BPV}, there are $(-2)$-curves $C_1,\ldots, C_r$ such that
\begin{itemize}
\item[(A.1)] for any $i\in\{1,\ldots,r\}$, $(C_i,\iota^*C_i)=0$ (by $(-,-)$ we denote the cup-product on $H^2(X,\ZZ)$);
\item[(A.2)] the isometry $w=s_{C_1}\circ s_{\iota^*C_1}\circ\cdots\circ s_{C_r}\circ s_{\iota^*C_r}$ is such that $w(\varphi(L'))=\pm L'$.
\end{itemize}
Here $s_{C_i}\in\OO(\widetilde H(X,\ZZ))$ stands for the reflection on cohomology with respect to the vector $(0,[C_i],0)$ (see \cite[Sect.\ VIII.21]{BPV}). Moreover, by its very definition, $\iota^*\circ w=w\circ\iota^*$. Hence, by the Torelli Theorem for K3 surfaces, there is an isomorphism $f:X\isomor X$ with $(j\circ)f_*=w\circ\varphi$ and $\iota^*\circ f_*=f_*\circ\iota^*$.

Take now the spherical twists $T_{\ko_{C_i}(-1)}$. Since $T^H_{\ko_{C_i}(-1)}=s_{C_i}$ and, by \cite[Thm.\ 1.2]{ST}, $T_{\ko_{C_i}(-1)}\circ T_{\ko_{\iota^*C_i}(-1)}=T_{\ko_{\iota^*C_i}(-1)}\circ T_{\ko_{C_i}(-1)}$, the equivalences $\iota^*\circ T_{\ko_{C_i}(-1)}\circ T_{\ko_{\iota^*C_i}(-1)}$ and $T_{\ko_{C_i}(-1)}\circ T_{\ko_{\iota^*C_i}(-1)}\circ\iota^*$ are isomorphic. Hence the equivalence
\[
\Psi:=(T_{\ko_{C_1}(-1)}\circ
T_{\ko_{\iota^*C_1}(-1)}\circ\cdots\circ T_{\ko_{C_r}(-1)}\circ
T_{\ko_{\iota^*C_r}(-1)}\circ\Phi)^{-1}\circ f_*
\]
is such that $\Psi^H=(j\circ)\psi$ and
$\iota^*\circ\Psi=\Psi\circ\iota^*$. Hence there exists
$\Psi'\in\Aut(\Db(Y))$ such that $\mathrm{Lift}(\Psi')=\Psi$ and
$\Pi(\Psi')=(j\circ)\psi$.

\bigskip

\noindent {\sc Case 2.} Suppose now that $\psi(0,0,1)=(r,\ell,s)=:v$, with $r\neq 0$.

\medskip

Up to composing with $-\id_{\widetilde H(X,\ZZ)}$ (which, on the
level of derived categories, just corresponds to the shift $[1]$),
we may assume $r>0$. Then consider the moduli space $M:=M_h(v)$ of
stable (with respect to a generic ample polarization $h$) sheaves
with Mukai vector $v$. The general theory of moduli spaces of
stable sheaves on K3 surfaces in \cite{Mu} ensures that, being $v$
primitive, $M$ is a K3 surface itself (see, for example, the
discussion in \cite[Ch.\ 10]{Hu1}). Moreover, the universal family
$\ke$ on $M\times X$ induces an equivalence
$\Phi_\ke:\Db(M)\isomor\Db(X)$.

\bigskip

\noindent{\sc Claim.} \emph{The involution $\iota:X\to X$ and the equivalence $\Phi_\ke$ induce a fixed-point-free involution $\tilde\iota$ on $M$ such that $\ke$ is equivariant with respect to the group $\widetilde G=\langle\tilde\iota\times\iota\rangle$.}

\medskip

\begin{proof} Consider the commutative diagram
\[
\xymatrix{\Db(M)\ar[d]_{\Phi_\ke}\ar[rr]^{\Psi}& & \Db(M)\ar[d]^{\Phi_\ke}\\ \Db(X)\ar[rr]^{\iota^*}& & \Db(X),}
\]
where $\Psi$ is the natural composition. For any $x\in M$, $\iota^*\ke_x$ is stable and $v(\iota^*\ke_x)=v(\ke_x)$ ($v$ is invariant under the action of $\iota^*$). Hence $\Psi(\ko_x)=\Phi^{-1}_\ke(\iota^*(\Phi_\ke(\ko_y)))=\Phi^{-1}_\ke(\iota^*\ke_x)=\ko_y$, for some closed point $y\in M$. By \cite[Cor.\ 5.23]{Hu1}, $\Psi=((-)\otimes L)\circ f_*$, for some line bundle $L\in\Pic(X)$ and an isomorphism $f:M\isomor X$.

Observe that $\Psi^H$ is not the identity. Indeed,
$\Psi^H|_{T(M)}=((\Phi_\ke^H)^{-1}\circ\iota^*|_{\Phi^H_\ke(T(M))}\circ\Phi_\ke^H)|_{T(M)}$.
By \cite[Thm.\ 1.5]{Mu},
$\Phi^H_\ke(T(M))=T(X)\subseteq\Lambda_-$, where $\Lambda_-$ is
the eigenspace of the eigenvalue $-1$ of $\iota^*$. Hence
$\Psi^H|_{T(M)}=-\id_{T(M)}$.

Take now the commutative diagram
\begin{eqnarray}\label{eqn:cd1}
\xymatrix{\widetilde H(M,\ZZ)\ar[d]_{\Phi_\ke^H}\ar[rr]^{\Psi^H}& & \widetilde H(M,\ZZ)\ar[d]^{\Phi_\ke^H}\\ \widetilde H(X,\ZZ)\ar[rr]^{\iota^*}\ar[d]_{\psi}& & \widetilde H(X,\ZZ)\ar[d]^{\psi}\\\widetilde H(X,\ZZ)\ar[d]_{\Psi_1^H}\ar[rr]^{\iota^*}& & \widetilde H(X,\ZZ)\ar[d]^{\Psi_1^H}\\ \widetilde H(X,\ZZ)\ar[rr]^{\iota^*}& & \widetilde H(X,\ZZ),}
\end{eqnarray}
where $\Psi_1^H$ is the isometry induced on cohomology by an equivalence $\Psi_1:\Db(X)\isomor\Db(X)$  with the following properties:
\begin{itemize}
\item[(B.1)] it is the composition of the tensorization with a(n equivariant) line bundle on $X$ and the composition of spherical twists $T_{\ko_{C_i}(-1)}\circ T_{\iota^*\ko_{C_i}(-1)}$, where $C_1,\ldots,C_r$ are rational curves on the K3 surface $X$ as in (A.1);
\item[(B.2)] there exists an isomorphism $f_1:M\isomor X$ such that $(j\circ)(f_1)_*=\Psi_1^H\circ \psi\circ\Phi_\ke^H$.
\end{itemize}
Notice that such a functor exists due to the fact that $\psi$ is equivariant.

Diagram \eqref{eqn:cd1} can be rewritten on the level of derived categories in the following way
\[
\xymatrix{\Db(M)\ar[d]_{(f_1)_*}\ar[rr]^{\Psi}& & \Db(M)\ar[d]^{(f_1)_*}\\ \Db(X)\ar[rr]^{\iota^*}& & \Db(X).}
\]
This proves that $\Psi=f_*$. Moreover, $f$ has no fixed points. Indeed, suppose that for, some closed point $x\in M$,
\[
\ko_x=\Psi(\ko_x)=f^*(\iota^*(f_*(\ko_x))).
\]
Then $f_*(\ko_x)=\iota^*(f_*(\ko_x))$. But $\iota^*$ does not fix any skyscraper sheaf by definition. This concludes the proof of the claim.\end{proof}

Now put $\Phi=\Phi_\ke$ if $r>0$ and $\Phi=\Phi_{\ke[1]}$ if $r<0$. By the previous claim, $M$ has a fixed-point-free involution $\tilde\iota$ making the equivalence $\Phi$ equivariant with respect to the group $\widetilde G$. Therefore the composition $\psi':=(\Phi^{-1})^H\circ\psi$ is such that $\psi'(0,0,1)=\pm(0,0,1)$ and one proceeds as in Case 1.

\bigskip

\noindent {\sc Case 3.} Finally assume that $\psi(0,0,1)=(0,\ell,s)=:v$, with $\ell\neq 0$.

\medskip

Since $\iota^* v=v$, take a $G$-equivariant line bundle $L'\in\Pic(X)$ such that $(L',\ell)+s\neq 0$. The equivalence $T_{\ko_X}$ is $G$-equivariant. Thus the composition $(T_{\ko_X}\circ((-)\otimes L'))^H\circ\psi$ is as in Case 2 and we proceed as before.

\bigskip

To prove that $\OO_+(\widetilde H(X,\ZZ))_G/G$ is
a subgroup of the image of $\Pi$ of index at most $2$, it is
enough to show that all the equivalences involved in the above
constructions are orientation preserving. But for this we can just
apply \cite[Sect.\ 5]{HS1}.

To conclude the proof, we need to observe that $\OO_+(\widetilde
H(X,\ZZ))$ is the image of the natural morphism
$\Aut(\Db(X))\to\OO(\widetilde H(X,\ZZ))$. This is the content of \cite[Corollary 3]{HMS1}.
\end{proof}

Going back to Proporision \ref{prop:BM} and hence to the case of
distinct Enriques surfaces, the same proof yields the following:

\begin{cor}\label{cor:dtt}
Let $Y_1$ and $Y_2$ be Enriques surfaces and let $X_1$ and $X_2$
be the universal covers endowed with the involutions $\iota_1$ and
$\iota_2$.

{\rm (i)} Any equivalence $\Db(Y_1)\cong\Db(Y_2)$ induces an
orientation preserving equivariant Hodge isometry $\widetilde
H(X_1,\ZZ)\cong\widetilde H(X_2,\ZZ)$.

{\rm (ii)} Any orientation preserving equivariant Hodge isometry
$\widetilde H(X_1,\ZZ)\cong\widetilde H(X_2,\ZZ)$ lifts to an
equivalence $\Db(Y_1)\cong\Db(Y_2)$.
\end{cor}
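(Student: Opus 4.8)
The plan is to rerun the proof of Proposition \ref{thm:DTT} essentially verbatim, with the single K3 surface $X$ replaced by the two universal covers $X_1$ and $X_2$; at no point did that argument use that source and target coincide. For part (i), an equivalence $\Db(Y_1)\cong\Db(Y_2)$ is, under the identifications $\Db(Y_i)\cong\Db_G(X_i)$, an equivalence $\Db_G(X_1)\cong\Db_G(X_2)$. Using the Fourier--Mukai kernel description together with the $2{:}1$ forgetful functor $\Forg_{G_\Delta}$, exactly as in the diagram of Proposition \ref{thm:DTT} (see \cite[Sect.\ 3.3]{Pl}), this lifts to an equivalence $\widetilde\Phi\colon\Db(X_1)\cong\Db(X_2)$ commuting with $\iota_1^*$ and $\iota_2^*$. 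By the Derived Torelli Theorem for K3 surfaces (\cite{Or1}; see also \cite[Prop.\ 10.10]{Hu1}), $\widetilde\Phi$ induces a Hodge isometry $\widetilde H(X_1,\ZZ)\cong\widetilde H(X_2,\ZZ)$, which is equivariant because $\widetilde\Phi$ intertwines the two involutions and orientation preserving by \cite[Cor.\ 3]{HMS1} (see also \cite[Sect.\ 5]{HS1}).

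For part (ii), I would take an orientation preserving equivariant Hodge isometry $\psi\colon\widetilde H(X_1,\ZZ)\cong\widetilde H(X_2,\ZZ)$ and run the same three-case reduction on $\psi(0,0,1)=(r,\ell,s)$. In Case 3 ($r=0$, $\ell\neq0$) one precomposes $\psi$ with the Hodge isometry induced by $T_{\ko_{X_2}}\circ((-)\otimes L')$, for a $G$-equivariant $L'\in\Pic(X_2)$ with $(L',\ell)+s\neq0$, landing in Case 2. In Case 2 ($r\neq0$, and we may assume $r>0$ after composing with the shift) one forms the moduli space $M:=M_h(v)$ of $h$-stable sheaves on $X_2$ with primitive Mukai vector $v=\psi(0,0,1)$; the universal family gives an equivalence $\Phi_\ke\colon\Db(M)\cong\Db(X_2)$, and the Claim of Proposition \ref{thm:DTT} (whose proof goes through unchanged) produces a fixed-point-free involution $\tilde\iota$ on $M$ with $\ke$ equivariant for $\widetilde G=\langle\tilde\iota\times\iota_2\rangle$. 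Thus $M$ is itself the universal cover of an Enriques surface and $(\Phi_\ke^{-1})^H\circ\psi$ reduces the problem to Case 1 for the pair $(X_1,M)$. In Case 1 ($\psi(0,0,1)=\pm(0,0,1)$) one has $\psi(1,0,0)=\exp(\mathrm{c}_1(L))$ on the target; tensoring by $L\dual$ (and shifting if the sign is $-$) makes $\psi$ act as the identity on $H^0\oplus H^4$, after which the reflections along the $\iota^*$-orbits of $(-2)$-curves from (A.1)--(A.2) together with the K3 Torelli Theorem yield an isomorphism $f\colon X_1\cong X_2$ commuting with the involutions. Composing the corresponding twists $T_{\ko_{C_i}(-1)}\circ T_{\ko_{\iota^*C_i}(-1)}$, the tensorizations, and $f_*$ gives an equivariant equivalence $\Db(X_1)\cong\Db(X_2)$ inducing $\psi$ up to the orientation-reversing isometry $j$, which cannot occur here since $\psi$ is orientation preserving (by \cite[Sect.\ 5]{HS1}). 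This descends through $\Forg_{G_\Delta}$ to the sought equivalence $\Db(Y_1)\cong\Db(Y_2)$.

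The step I expect to require the most care is the equivariant moduli-space construction in Case 2: one must verify, for two distinct surfaces, that the involution $\tilde\iota$ on $M=M_h(v)$ built from $\iota_2$ and $\Phi_\ke$ is genuinely fixed-point-free and that $\ke$ is $\widetilde G$-equivariant, so that $M$ is again a K3 cover of an Enriques surface and Case 1 legitimately applies to $(X_1,M)$. All the remaining ingredients---the lifting of equivariant K3 equivalences to the Enriques level, the reflection-plus-Torelli normalization, and the orientation bookkeeping---are formal consequences of \cite{Or1,ST,HS1,HMS1}, the Torelli Theorem, and \cite[Prop.\ VIII.21.1]{BPV}, none of which presuppose a common surface, so they transfer directly from the single-surface argument of Proposition \ref{thm:DTT}.
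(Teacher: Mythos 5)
Your proposal is correct and matches the paper exactly: the paper's entire proof of this corollary is the remark that ``the same proof [as Proposition \ref{thm:DTT}] yields the following,'' and your case-by-case transfer of that argument to two distinct surfaces (moduli space built on $X_2$, Case 1 applied to the pair $(X_1,M)$, orientation bookkeeping ruling out $j$) is precisely the intended elaboration.
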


If we do not allow to use the result in \cite{HMS1}, Proposition
\ref{thm:DTT} has to be weakened in the following way:

\begin{prop}
There exists a natural morphism of groups
$\Pi:\Aut(\Db(Y))\lto\OO(\widetilde H(X,\ZZ))_G/G$ whose image
contains the index-2 subgroup $\OO_+(\widetilde H(X,\ZZ))_G/G$.
\end{prop}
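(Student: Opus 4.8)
The plan is to run the argument for Proposition \ref{thm:DTT} essentially verbatim, discarding only its final step, which is the sole place where \cite{HMS1} intervenes. Concretely, the stronger statement of Proposition \ref{thm:DTT} splits into (a) the construction of $\Pi$, (b) the containment $\OO_+(\widetilde H(X,\ZZ))_G/G\subseteq\mathrm{Im}(\Pi)$, and (c) the reverse containment $\mathrm{Im}(\Pi)\subseteq\OO_+(\widetilde H(X,\ZZ))_G/G$. Parts (a) and (b) use no input from \cite{HMS1}, and they already give exactly the weaker conclusion claimed here.

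First I would construct $\Pi$ as before: the functors $\Forg_{G_\Delta}$ and $\Inf_{G_\Delta}$ produce the $2:1$ lifting homomorphism $\mathrm{Lift}:\Aut(\Db(Y))\to\Aut(\Db(X))_G/G$ with kernel $\langle(-)\otimes\omega_Y\rangle$, and composing with the standard cohomological realization $\Aut(\Db(X))_G/G\to\OO(\widetilde H(X,\ZZ))_G/G$ (see \cite{Or1} or \cite[Prop.\ 10.10]{Hu1}) yields $\Pi$. Next I would establish $\OO_+(\widetilde H(X,\ZZ))_G/G\subseteq\mathrm{Im}(\Pi)$ by reusing Cases 1--3 of the proof of Proposition \ref{thm:DTT}. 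For an arbitrary $\psi\in\OO(\widetilde H(X,\ZZ))_G$, those cases construct, possibly after composing with the isometry $j:=-\id_{H^2(X,\ZZ)}\oplus\id_{H^0(X,\ZZ)\oplus H^4(X,\ZZ)}$, an autoequivalence $\Phi\in\Aut(\Db(Y))$ with $\Pi(\Phi)=(j\circ)\psi$. The building blocks employed there (tensoring by equivariant line bundles, the equivariant pairs of spherical twists $T_{\ko_{C_i}(-1)}\circ T_{\ko_{\iota^*C_i}(-1)}$, push-forwards along equivariant isomorphisms, and the equivariant Fourier--Mukai equivalence attached to a moduli space of stable sheaves) are all orientation preserving by \cite[Sect.\ 5]{HS1}. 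Since $j$ reverses the orientation of the four positive directions, whenever $\psi$ is orientation preserving the constructed $\Phi$ cannot realize the orientation-reversing isometry $j\circ\psi$, and so must satisfy $\Pi(\Phi)=\psi$. This gives the asserted containment.

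Finally, I would simply stop here. In Proposition \ref{thm:DTT} the reverse containment (c), equivalently the statement that \emph{every} autoequivalence of $\Db(X)$ induces an orientation preserving Hodge isometry, was deduced from \cite[Corollary 3]{HMS1}; omitting precisely this input leaves the weaker assertion that $\mathrm{Im}(\Pi)$ merely \emph{contains} $\OO_+(\widetilde H(X,\ZZ))_G/G$. Consequently there is no genuinely new obstacle, since the heavy lifting of Cases 1--3 has already been carried out. The only point demanding care is the orientation bookkeeping in the second step: one must check that \cite[Sect.\ 5]{HS1}, together with the orientation-reversing character of $j$, is enough to fix the sign and force $\Pi(\Phi)=\psi$ for orientation preserving $\psi$, without appealing to the harder result of \cite{HMS1}.
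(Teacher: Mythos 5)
Your proposal is correct and coincides with the paper's intent: the paper states this proposition precisely as what survives of Proposition \ref{thm:DTT} when the appeal to \cite[Corollary 3]{HMS1} is dropped, and its proof is exactly steps (a) and (b) of the proof of Proposition \ref{thm:DTT}, including the orientation bookkeeping via \cite[Sect.\ 5]{HS1} that you single out. Nothing further is needed.
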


Following \cite{B2}, given an Enriques surface $Y$ and its universal cover $X$, define the open subset $\kp(X)\subseteq\kn(X)\otimes\CC$ consisting of those vectors whose real and imaginary parts span a positive definite two plane in $\kn(X)\otimes\RR$. Denote by $\kp^+(X)$ one of the two connected components of $\kp(X)$. If $\Delta(X)$ is the set of vectors in $\kn(X)$ with self-intersection $-2$, Bridgeland considers
\[
\kp_0^+(X):=\kp^+(X)\setminus\bigcup_{\delta\in\Delta(X)}\delta^\perp.
\]
Define $\kp^+_0(Y):=\Forg_{G*}\dual(\kp_0^+(X))_G$ and take the group $\Aut^0(\Db(Y))$ (respectively $\Aut^0(\Db(X))$) of those autoequivalences preserving $\Sigma(Y)$ (respectively $\NDStab(\Db(X))$) and inducing the identity on cohomology via the morphisms $\Pi$.

\begin{cor}\label{cor:cov} The map $\kz:\Sigma(Y)\to\kn(Y)\otimes\CC$ in Proposition \ref{prop:connEnr} defines a covering map onto $\kp_0^+(Y)$ such that $\Aut^0(\Db(Y))/\langle (-)\otimes\omega_Y\rangle$ acts as the group of deck transformations.\end{cor}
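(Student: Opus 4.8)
The plan is to transport Bridgeland's covering-space result for the K3 surface $X$ across the embedding of Proposition \ref{prop:connEnr}, using the fact that the central charge map $\kz$ on $\Sigma(Y)$ is, by the commuting diagram \eqref{eqn:commdiagr}, identified with the restriction of the central charge map on the invariant locus $\Gamma_X\cap\NDStab(\Db(X))$. Concretely, I would first recall from \cite{B2} that the analogous map $\kz_X:\NDStab(\Db(X))\to\kn(X)\otimes\CC$ is a covering onto $\kp_0^+(X)$. The key observation is then that taking $G$-invariants is compatible with this structure: restricting a covering map to the fixed locus of a group acting by deck-compatible isometries yields a covering onto the fixed locus of the base. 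Since $\iota^*$ acts on $\NDStab(\Db(X))$ by an autoequivalence preserving $\kz_X$ up to the linear action on $\kn(X)\otimes\CC$, the invariant stability manifold $\Gamma_X\cap\NDStab(\Db(X))$ maps via $\kz_X$ to $\kp_0^+(X)_G=\kp_0^+(X)\cap(\kn(X)\otimes\CC)_G$, and this restricted map is again a covering.

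Next I would translate this across the isomorphisms $\Forg_{G*}^\vee$ and $\Inf_{G*}$. By Proposition \ref{prop:connEnr} the map $\Inf_G^{-1}$ identifies $\Sigma(Y)$ with $\Gamma_X\cap\NDStab(\Db(X))$, and the bottom row of \eqref{eqn:commdiagr} identifies the base $(\kn(X)\otimes\CC)^\vee_G$ with $(\kn(Y)\otimes\CC)^\vee$ via the linear isomorphism $\Forg^\vee_{G*}$. Under this identification, the image $\kz(\Sigma(Y))$ is exactly $\Forg_{G*}^\vee(\kp_0^+(X))_G$, which is the definition of $\kp_0^+(Y)$. A small but necessary check is that the root system $\Delta(X)$ used to excise the hyperplanes $\delta^\perp$ behaves well under invariants: one needs that the invariant part of $\kp^+(X)\setminus\bigcup_\delta\delta^\perp$ agrees with $\kp^+(Y)$ minus the appropriate walls, so that the \emph{open} set $\kp_0^+(Y)$ is genuinely the image and the covering does not acquire spurious branch loci. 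This follows because an invariant vector lies in some $\delta^\perp$ with $\delta\in\Delta(X)$ iff it is orthogonal to the invariant/anti-invariant projections of $\delta$, and $\iota^*$-equivariance of the pairing keeps this a locally finite union of real hyperplanes in $(\kn(X)\otimes\CC)_G$.

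For the deck group statement, I would argue that the autoequivalences in $\Aut^0(\Db(Y))$ act on $\Sigma(Y)$ covering the identity on $\kp_0^+(Y)$, precisely because $\Pi$ records their cohomological action and $\Aut^0$ is defined as the subgroup inducing the identity there and preserving $\Sigma(Y)$. Via the isomorphism \eqref{eqn:iso1}, $\Aut^0(\Db(Y))/\langle(-)\otimes\omega_Y\rangle$ corresponds to the subgroup of $\Aut(\Db(X))_G/G$ acting on $\Gamma_X\cap\NDStab(\Db(X))$ over the identity; by Bridgeland's analysis on $X$ this is exactly the deck group of $\kz_X$ restricted to invariants. The kernel $\langle(-)\otimes\omega_Y\rangle$ must be quotiented out because $(-)\otimes\omega_Y=(-)\otimes(\ko_X,-\id)$ is precisely the nontrivial element lost in passing from $\Db(Y)$ to $\Aut(\Db(X))_G/G$ (it acts trivially on $X$ after forgetting linearizations), so it acts trivially on $\Sigma(Y)$ and genuinely belongs to the kernel of the deck action.

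The main obstacle I anticipate is making the covering-space argument on invariants fully rigorous rather than merely plausible: one must verify that $\kz|_{\Sigma(Y)}$ is not just a local homeomorphism (which is immediate from Lemma \ref{lem:clossub} together with Proposition \ref{prop:connEnr}) but has the path-lifting and evenly-covered-neighbourhood properties over all of $\kp_0^+(Y)$. The natural route is to lift paths in $\kp_0^+(Y)$ to paths in $\kp_0^+(X)_G$, lift those to $\NDStab(\Db(X))$ using Bridgeland's covering, and then check that the lift stays inside the \emph{invariant} locus $\Gamma_X$ — this last point uses that $\iota^*$ acts trivially on the base $\kp_0^+(X)_G$, so by uniqueness of lifts the lifted path is $\iota^*$-invariant and hence lands in $\Sigma(Y)$ after applying $\Forg_G^{-1}$. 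Verifying that the fibres are exactly the orbits of the deck group, i.e. that the covering is regular with the stated group, is where the interplay between Bridgeland's explicit wall-and-chamber description on $X$ and the equivariant bookkeeping is most delicate.
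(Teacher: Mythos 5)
Your overall architecture is the same as the paper's: transport Bridgeland's covering $\kz_X:\NDStab(\Db(X))\to\kp_0^+(X)$ to the $G$-invariant locus and then across the identifications of Proposition \ref{prop:connEnr} and the isomorphism \eqref{eqn:iso1}. The covering-map half of your argument (lift a path in $(\kp_0^+(X))_G$ through Bridgeland's covering starting at an invariant point, and use uniqueness of lifts together with the triviality of the $\iota^*$-action on the base to see the lift stays in $\Gamma_X$) is correct and in fact more explicit than the paper, which dismisses this part as an easy consequence of diagram \eqref{eqn:commdiagr}. Your observation that $(-)\otimes\omega_Y$ acts trivially on $\Sigma(Y)$ is also correct.

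The gap is in the deck-group half, at exactly the point you label ``most delicate'' and then leave unresolved. The assertion that the subgroup of $\Aut(\Db(X))_G/G$ acting over the identity ``is exactly the deck group of $\kz_X$ restricted to invariants'' is the entire content of the statement, not a consequence of Bridgeland's theorem. What \cite[Thm.\ 1.1]{B2} gives you is that two invariant stability conditions in the same fibre are related by \emph{some} $\Phi\in\Aut^0(\Db(X))$; to push $\Phi$ through \eqref{eqn:iso1} into $\Aut^0(\Db(Y))$ you must first show that $\Phi$ commutes with $\iota^*$, i.e.\ lies in $\Aut(\Db(X))_G$, and this is not formal. The paper's argument is a conjugation trick: if $\sigma$ and $\Phi(\sigma)$ are both invariant, then
$\Phi(\sigma)=\iota^*(\Phi(\sigma))=(\iota^*\circ\Phi\circ\iota^*)(\iota^*\sigma)=(\iota^*\circ\Phi\circ\iota^*)(\sigma)$,
and since $\iota^*\circ\Phi\circ\iota^*$ again lies in $\Aut^0(\Db(X))$ and the $\Aut^0(\Db(X))$-action on the connected component $\NDStab(\Db(X))$ is free by \cite[Thm.\ 1.1]{B2}, one concludes $\Phi=\iota^*\circ\Phi\circ\iota^*$. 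Without this step you only know that fibres of $\kz|_{\Sigma(Y)}$ are contained in orbits of $\Aut^0(\Db(X))$, not that they are orbits of $\Aut^0(\Db(Y))/\langle(-)\otimes\omega_Y\rangle$; supplying it would complete your proof.
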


\begin{proof} The fact that $\kz$ is a covering map is an easy consequence of the commutativity of diagram \eqref{eqn:commdiagr} in Proposition \ref{prop:connEnr}.

Let $\Phi\in\Aut^0(\Db(X))$ be such that $\Phi(\Gamma_X\cap\NDStab(\Db(X)))=\Gamma_X\cap\NDStab(\Db(X))$. By definition, $\Phi(\sigma)=\iota^*(\Phi(\sigma))=\Phi'(\iota^*(\sigma))=\Phi'(\sigma)$, for some autoequivalence $\Phi'\in\Aut^0(\Db(X))$ and for all $\sigma\in\Gamma_X\cap\NDStab(\Db(X))$. Due to \cite[Thm.\ 1.1]{B2}, this is enough to conclude that $\Phi=\Phi'$ and $\Phi\in\Aut^0(\Db(X))_G$.

Since $\Aut^0(\Db(X))_G$ injects into $\Aut(\Db(X))_G/G$, the commutativity of diagram \eqref{eqn:commdiagr} in Proposition \ref{prop:connEnr} and the isomorphism \eqref{eqn:iso1} allow us to conclude that $\Aut^0(\Db(Y))/\langle (-)\otimes\omega_Y\rangle$ acts as the group of deck transformations.\end{proof}

Therefore we can state the following conjecture generalizing \cite[Conj.\ 1.2]{B2} to the case of Enriques surfaces.

\begin{conjecture}\label{conj:BriEnr} The group $\Aut(\Db(Y))$ preserves $\Sigma(Y)$ and, moreover, $\Sigma(Y)$ is connected and simply connected.\end{conjecture}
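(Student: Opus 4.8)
The plan is to imitate Bridgeland's analysis of K3 surfaces \cite{B2}, taking as the central tool the covering map $\kz\colon\Sigma(Y)\to\kp_0^+(Y)$ produced in Corollary~\ref{cor:cov}. Both topological assertions would be reduced to the topology of the period domain $\kp_0^+(Y)$ together with an identification of the deck group. I would begin with the preservation statement. By Proposition~\ref{thm:DTT} every $\Phi\in\Aut(\Db(Y))$ acts on $\kn(X)\otimes\CC$ through an element of $\OO_+(\widetilde H(X,\ZZ))_G/G$; such an isometry preserves the component $\kp^+(X)$, fixes the set $\Delta(X)$ of $(-2)$-vectors, and commutes with $\Forg_{G*}\dual$, so it preserves $\kp_0^+(Y)=\Forg_{G*}\dual(\kp_0^+(X))_G$. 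Feeding this through the commutative square \eqref{eqn:commdiagr} of Proposition~\ref{prop:connEnr} shows that $\Phi$ carries fibres of $\kz$ to fibres of $\kz$, hence sends $\Sigma(Y)$ into the open and closed locus of $\NStab(\Db(Y))$ lying above $\kp_0^+(Y)$. Identifying that locus with $\Sigma(Y)$ itself is precisely the connectedness half of the conjecture, so the two clauses of the statement are intertwined from the outset.

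I would next settle the \emph{generic} case, where Theorem~\ref{thm:main1}(i) guarantees that $\Db(Y)$ has no spherical objects. Transporting this to the cover, the $G$-invariant $(-2)$-walls should disappear, so that $\kp_0^+(Y)=\kp^+(Y)$ is a full component of the period domain. This $\kp^+(Y)$ is not simply connected: it fibres over the contractible domain of positive-definite oriented two-planes in $\kn(Y)\otimes\RR$ with fibre $\glp$, whence $\pi_1(\kp^+(Y))\cong\ZZ$, generated by the loop that rotates the overall phase once. That loop lifts under $\kz$ to the shift $[2]$, which lies in the deck group $\Aut^0(\Db(Y))/\langle(-)\otimes\omega_Y\rangle$ of Corollary~\ref{cor:cov}. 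Thus, \emph{granting connectedness of} $\Sigma(Y)$, it would remain only to check that the deck group is generated by $[2]$, i.e.\ that generically no autoequivalence other than shifts and $(-)\otimes\omega_Y$ induces the identity on cohomology; the isomorphism \eqref{eqn:iso1} reduces this to an analogous and tractable statement for $\Aut(\Db(X))_G$. The covering $\kz$ would then be the universal cover and $\Sigma(Y)$ simply connected.

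For arbitrary $Y$ one must reinstate the walls $\delta^\perp$, and the expected picture, parallel to \cite{B2}, is that $\pi_1(\kp_0^+(Y))$ is generated by the phase loop together with loops encircling these walls, the wall-monodromy being realised inside the deck group by $G$-equivariant spherical twists $T_{\ko_{C}(-1)}\circ T_{\iota^*\ko_{C}(-1)}$ pulled back from $X$, exactly the equivalences appearing in the proof of Proposition~\ref{thm:DTT}. The heart of the matter, and the main obstacle, is \emph{connectedness} of $\Sigma(Y)$, equivalently the identity $\Sigma(Y)=\NStab^\dag(\Db(Y))$. This is genuinely hard and is not known even for the cover: Bridgeland only proves that $\NDStab(\Db(X))$ is \emph{one} connected component of $\NStab(\Db(X))$, so a priori $\Gamma_X\cap\NDStab(\Db(X))$ need not exhaust $\Gamma_X$ and $\Sigma(Y)$ could acquire further components. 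Even granting connectedness, matching the deck group with all of $\pi_1(\kp_0^+(Y))$ is delicate, since the twists must be taken $G$-equivariantly and it is not clear a priori that the equivariant twists generate the full wall-monodromy; it is precisely this pair of gaps — connectedness, and surjectivity of the deck group onto the fundamental group — that keeps the statement a conjecture rather than a theorem.
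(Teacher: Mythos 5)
The statement you were asked to prove is stated in the paper as a \emph{conjecture} (it is the Enriques analogue of \cite[Conj.\ 1.2]{B2}); the paper offers no proof of it, only the supporting structure you correctly identify: the covering map $\kz:\Sigma(Y)\to\kp_0^+(Y)$ with deck group $\Aut^0(\Db(Y))/\langle(-)\otimes\omega_Y\rangle$ from Corollary \ref{cor:cov}, and the connectedness of $\Sigma(Y)$ in the generic case from Proposition \ref{prop:conngen}. Your closing admission that connectedness of $\Sigma(Y)$ and surjectivity of the deck group onto $\pi_1(\kp_0^+(Y))$ are genuinely open is exactly the honest assessment; your text is a strategy sketch consistent with the paper's stance, not a proof, and no proof should have been expected.

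One concrete inaccuracy in your sketch: in the generic case the $(-2)$-walls do \emph{not} disappear. The domain $\kp_0^+(Y)=\Forg_{G*}\dual(\kp_0^+(X))_G$ is defined by deleting the hyperplanes $\delta^\perp$ for $\delta\in\Delta(X)$, the $(-2)$-vectors of $\kn(X)$ on the covering K3 surface, and these persist (e.g.\ $\delta=v(\ko_X)=(1,0,1)$ is $G$-invariant); Proposition \ref{prop:conngen} gives $\kp_0^+(Y)=\kp_0^+(X)$, still with all walls removed. The absence of spherical objects in $\Db(Y)$ (Proposition \ref{prop:rigid}) does not translate into the absence of walls, precisely because the walls are indexed upstairs. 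Consequently your computation $\pi_1(\kp^+(Y))\cong\ZZ$ is not the relevant one even generically, and the generic case of simple connectedness is not settled by your argument: the paper itself only establishes \emph{connectedness} of $\Sigma(Y)$ for generic $Y$, leaving simple connectedness open there as well. The preservation clause also remains conditional in your treatment, as you note, since identifying the full preimage of $\kp_0^+(Y)$ with $\Sigma(Y)$ already presupposes the connectedness statement.
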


From the previous conjecture would follow a complete description
of the kernel of the morphism $\Pi$ in Proposition \ref{thm:DTT}.
In particular, we would get the existence of the short exact
sequence
\[
1\lto\pi_1(\kp_0^+(Y))\lto\Aut(\Db(Y))/\langle(-)\otimes\omega_Y\rangle\lto\OO_+(\widetilde H(X,\ZZ))_G/G\lto 1
\]
and the fact that $\Sigma(Y)=\NStab^\dag(\Db(Y))$. In the next section we will prove (Proposition \ref{prop:conngen}) that $\Sigma(Y)$ is connected when $Y$ is a generic Enriques surface.

\bigskip

\subsection{Spherical objects and generic Enriques surfaces}\label{subsec:GeomEnr}~

\medskip

An Enriques surface $Y$ with universal cover $X$ is \emph{generic} if the rank of the Picard group $\Pic(X)$ is $10$. Due to the main results in \cite{Ho1,Ho2}, $10$ is the minimal possible Picard number of a K3 surface which is the universal cover of an Enriques surface. Moreover, the set of all generic Enriques surfaces is dense in the moduli space of such surfaces.

\begin{lem}\label{lem:inv} Let $Y$ be a generic Enriques surface. Then
the involution $\iota:X\to X$ on the universal cover of $Y$ is
such that $\iota^*|_{\Pic(X)}=\id_{\Pic(X)}$,
$\iota^*|_{T(X)}=-\id_{T(X)}$ and
$\Pic(X)\cong\Lambda_+:=U(2)\oplus E_8(-2)$.\end{lem}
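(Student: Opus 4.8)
The plan is to identify $\Pic(X)$ and $T(X)$ with the invariant and anti-invariant sublattices of the Enriques involution $\iota^*$ acting on $H^2(X,\ZZ)$, and then to combine the classical computation of these sublattices with the genericity hypothesis $\rk\Pic(X)=10$. First I would record the action of $\iota^*$ on the holomorphic two-form. Writing $H^{2,0}(X)=\langle\sigma_X\rangle$, the cover $\pi:X\to Y$ is \'etale of degree $2$, so $H^0(Y,\Omega^2_Y)=H^0(X,\Omega^2_X)^{\langle\iota\rangle}$; since $Y$ is an Enriques surface this invariant space vanishes, and therefore $\iota^*\sigma_X=-\sigma_X$ (this is also the content of the identification $\omega_Y=(\ko_X,-\id)$ recalled at the beginning of Section \ref{subsec:enr}).

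Because $\iota^*$ is a Hodge isometry of order $2$ and acts as $-\id$ on $H^{2,0}(X)=\CC\sigma_X$ and hence (by conjugation) on $H^{0,2}(X)=\CC\overline{\sigma_X}$, its $(+1)$-eigenspace in $H^2(X,\CC)$ meets neither $H^{2,0}$ nor $H^{0,2}$ and so lies entirely in $H^{1,1}(X)$. Consequently the invariant lattice $\Lambda_+:=\{x\in H^2(X,\ZZ):\iota^*x=x\}$ satisfies $\Lambda_+\otimes\CC\subseteq H^{1,1}(X)$, whence $\Lambda_+\subseteq H^{1,1}(X)\cap H^2(X,\ZZ)=\Pic(X)$. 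This inclusion holds for every Enriques surface and does not yet use genericity.

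Next I would bring in the genericity assumption. By the classical description of the Enriques involution (see \cite{BPV}) one has, as abstract lattices, $\Lambda_+\cong U(2)\oplus E_8(-2)$ and $\Lambda_-:=\{x\in H^2(X,\ZZ):\iota^*x=-x\}\cong U\oplus U(2)\oplus E_8(-2)$, of ranks $10$ and $12$. Since $Y$ is generic, $\rk\Pic(X)=10=\rk\Lambda_+$, so $\Lambda_+\subseteq\Pic(X)$ is a finite-index inclusion. Both lattices are primitive (saturated) in $H^2(X,\ZZ)$: the N\'eron--Severi lattice $\Pic(X)=\NS(X)$ is always saturated, and $\Lambda_+$ is saturated because $\iota^*(nx)=nx$ forces $\iota^*x=x$. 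Equality of ranks then forces $\Pic(X)=(\Lambda_+\otimes\QQ)\cap H^2(X,\ZZ)=\Lambda_+$. This yields simultaneously the third assertion $\Pic(X)\cong U(2)\oplus E_8(-2)$ and, since $\iota^*$ is the identity on $\Lambda_+$, the first assertion $\iota^*|_{\Pic(X)}=\id_{\Pic(X)}$.

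Finally I would treat the transcendental lattice. For any involution the invariant and anti-invariant sublattices are mutually orthogonal, so $\Lambda_-\subseteq\Lambda_+^\perp$; as $\rk\Lambda_+^\perp=22-10=12=\rk\Lambda_-$ and $\Lambda_-$ is saturated (for the same reason as $\Lambda_+$), this inclusion is an equality $\Lambda_-=\Lambda_+^\perp$. Hence $T(X)=\Pic(X)^\perp=\Lambda_+^\perp=\Lambda_-$, on which $\iota^*$ acts as $-\id$ by definition, giving $\iota^*|_{T(X)}=-\id_{T(X)}$. The only genuine input beyond this elementary rank-and-primitivity bookkeeping is the abstract identification $\Lambda_+\cong U(2)\oplus E_8(-2)$; I expect that to be the main point, and I would simply quote it from the theory of Enriques surfaces in \cite{BPV}, everything else being the saturation argument that the genericity hypothesis $\rk\Pic(X)=10$ is precisely designed to trigger.
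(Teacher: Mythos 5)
Your proof is correct and follows essentially the same route as the paper's: both rest on the classical lattice-theoretic description of the Enriques involution (the paper quotes Horikawa's Theorem 5.1 in \cite{Ho2} for the normal form $\theta$ with invariant lattice $U(2)\oplus E_8(-2)$ contained in $\Pic(X)$, you quote the equivalent statement from \cite{BPV}) combined with the rank count $\rk\Pic(X)=10=\rk\Lambda_+$ forced by genericity. You merely make explicit several steps the paper leaves implicit, namely the Hodge-theoretic reason for $\Lambda_+\subseteq\Pic(X)$, the saturation argument upgrading the finite-index inclusion to an equality, and the deduction $T(X)=\Lambda_+^\perp=\Lambda_-$.
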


\begin{proof} By \cite[Thm.\ 5.1]{Ho2}, for the K3 surface $X$ there
exist isometries $\psi:H^2(X,\ZZ)\isomor\Lambda:=U^{\oplus
3}\oplus E_8(-1)^{\oplus 2}$ and $\theta:\Lambda\isomor\Lambda$
such that the following diagram commutes
\begin{eqnarray}\label{eqn:inv}
\xymatrix{H^2(X,\ZZ)\ar[d]_{\psi}\ar[rr]^{\iota^*}& &
H^2(X,\ZZ)\ar[d]^{\psi}\\ \Lambda\ar[rr]^{\theta}& & \Lambda.}
\end{eqnarray}
Denote by $\Lambda_+\hookrightarrow\Lambda$ the eigenspace of the
eigenvalue $+1$ of $\theta$. By \cite{Ho1,Ho2},
$\Lambda_+=U(2)\oplus E_8(-2)$ (for a definition of the lattices
$U$ and $E_8(-1)$ appearing in the previous discussion, see for
example \cite[Ch.\ I]{BPV}) and
$\psi^{-1}(\Lambda_+)\subseteq\Pic(X)$. Since $\Lambda_+$ has rank
$10$ and $Y$ is generic, this concludes the proof.
\end{proof}

\begin{remark}\label{rmk:nocurves} In the generic case, by Lemma \ref{lem:inv},
$v^2$ is divisible by $4$, for any $v\in\Pic(X)$. Hence $X$ does
not contain rational curves. Since a rational curve in $Y$ lifts
to a pair of (disjoint) rational curves in $X$, $Y$ does not
contain rational curves either.\end{remark}

We can now say more about the set $\Sigma(Y)$ defined in Proposition \ref{prop:connEnr}.

\begin{prop}\label{prop:conngen} Let $Y$ be a generic Enriques surface. Then $\NDStab(\Db(X))\subseteq\NStab(\Db(X))$ is isomorphic to $\Sigma(Y)$, which is then connected. In particular $\Sigma(Y)=\NStab^\dagger(\Db(Y))$.\end{prop}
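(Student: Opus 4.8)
The plan is to reduce the whole statement to showing that $\Gamma_X\cap\NDStab(\Db(X))$ is in fact all of $\NDStab(\Db(X))$; granting this, Proposition \ref{prop:connEnr} immediately identifies $\Sigma(Y)$ with $\NDStab(\Db(X))$ and the remaining assertions follow formally. The heart of the matter is the observation that, in the generic case, the involution $\iota^*$ acts trivially on the numerical Grothendieck group $\kn(X)$, putting us in exactly the situation of the Kummer example (Example \ref{ex:Kummer}).

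First I would compute the action of $\iota^*$ on $\kn(X)$. For the K3 surface $X$ one has $\kn(X)=H^0(X,\ZZ)\oplus\NS(X)\oplus H^4(X,\ZZ)$. The involution $\iota^*$ fixes $H^0(X,\ZZ)$ and $H^4(X,\ZZ)$, and by Lemma \ref{lem:inv} it acts as the identity on $\Pic(X)=\NS(X)$. Hence $\iota^*=\id$ on $\kn(X)$, and therefore $(\kn(X)\otimes\CC)_G=\kn(X)\otimes\CC$.

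Next I would run the open-and-closed argument already used in Example \ref{ex:Kummer}. Since the $G$-invariant subspace of $\kn(X)\otimes\CC$ is the whole space, for the component $\Sigma=\NDStab(\Db(X))$ we have $(V(\Sigma))_G=V(\Sigma)$, and by Lemma \ref{lem:clossub} the restriction $\kz|_{\Gamma_X}$ is a local homeomorphism onto $V(\Sigma)$. Thus the inclusion $\Gamma_X\cap\NDStab(\Db(X))\hookrightarrow\NDStab(\Db(X))$ becomes a local homeomorphism between manifolds of the same dimension, hence an open map; combined with the closedness of $\Gamma_X$ from Lemma \ref{lem:clossub}, the intersection $\Gamma_X\cap\NDStab(\Db(X))$ is open and closed. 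As $\NDStab(\Db(X))$ is connected by definition and the intersection is non-empty by Proposition \ref{prop:connEnr} (choosing $G$-invariant $\omega,\beta$), I conclude $\Gamma_X\cap\NDStab(\Db(X))=\NDStab(\Db(X))$.

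Finally, Proposition \ref{prop:connEnr} shows that $\Forg_G^{-1}$ restricts to an isomorphism $\Gamma_X\cap\NDStab(\Db(X))\iso\Sigma(Y)$, with inverse induced by $\Inf_G$, so $\Sigma(Y)\iso\NDStab(\Db(X))$ is connected. Since $\NStab^\dagger(\Db(Y))$ is by definition the connected component of $\Sigma(Y)$ containing the images of the $G$-invariant Bridgeland conditions, connectedness of $\Sigma(Y)$ forces $\Sigma(Y)=\NStab^\dagger(\Db(Y))$. The only genuine input is Lemma \ref{lem:inv}, equivalently the genericity hypothesis, which trivializes $\iota^*$ on $\kn(X)$; I expect the sole delicate point to be invoking the local-homeomorphism statement of Lemma \ref{lem:clossub} correctly to pass from $(V(\Sigma))_G=V(\Sigma)$ to openness of the intersection, while everything else is a formal consequence of the earlier results.
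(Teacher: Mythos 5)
Your proposal is correct and follows essentially the same route as the paper: Lemma \ref{lem:inv} gives $\iota^*=\id$ on $\kn(X)$, hence the $G$-invariant part of the central-charge space (equivalently $\kp_0^+(X)_G=\kp_0^+(X)$ in the paper's phrasing) is everything, so $\Gamma_X\cap\NDStab(\Db(X))$ is open and closed, nonempty, and therefore the whole component, after which Proposition \ref{prop:connEnr} yields the identification with $\Sigma(Y)$. You are merely more explicit than the paper about the openness step via Lemma \ref{lem:clossub}, which is a welcome clarification rather than a deviation.
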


\begin{proof}  By Lemma \ref{lem:inv}, $\kp^+_0(Y)=\kp^+_0(X)_G=\kp^+_0(X)$. Hence $\Gamma_X$ is an open and closed subset in the connected component $\NDStab(\Db(X))$ of the same dimension.\end{proof}

In this section we will also be interested in characterizing special objects in $\Db(Y)$. In particular, recall the following definition:

\begin{definition}\label{def:sohrig} Let $Z$ be a smooth projective variety with canonical bundle $\omega_Z$.

(i) An object $\ke\in\Db(Z)$ is \emph{strongly rigid} if
$(\ke,\ke)^1=0$ and $\ke\cong\ke\otimes\omega_Z$;

(ii)  An object $\ke\in\Db(Z)$ is \emph{spherical} if $(\ke,\ke)^i=1$ for $i\in\{0,2\}$ and otherwise zero and $\ke\cong\ke\otimes\omega_Z$.\end{definition}

For a variety $Z$ we denote by $\Sph(Z)$ and $\Rig(Z)$ the sets of
spherical and strongly rigid objects in $\Db(Z)$ respectively.


\begin{lem}\label{lem:rigupst} Let $\pi:X\to Y$ be as before. If $\ke\in\Db(Y)$ is strongly rigid,
then there exists $\kf\in\Db(X)$ strongly rigid and such that
$\ke=\pi_*(\kf)$.\end{lem}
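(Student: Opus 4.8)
The natural candidate is a summand of $E:=\pi^*\ke=\Forg_G(\ke)$ singled out by the isomorphism $\ke\cong\ke\otimes\omega_Y$. The plan is to first show that $E$ is rigid on the K3 surface $X$, and then to split off the correct ``half'' of $E$ whose pushforward recovers $\ke$.

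\emph{Rigidity of $\pi^*\ke$.} Since $\pi$ is an \'etale double cover one has $\pi_*\ko_X\cong\ko_Y\oplus\omega_Y$, so by the projection formula together with the adjunction $\pi^*\dashv\pi_*$ (recall $\pi^*=\Forg_G$ and $\pi_*=\Inf_G$),
\[
\Hom^\bullet_X(\pi^*\ke,\pi^*\ke)\cong\Hom^\bullet_Y(\ke,\pi_*\pi^*\ke)\cong\Hom^\bullet_Y(\ke,\ke)\oplus\Hom^\bullet_Y(\ke,\ke\otimes\omega_Y).
\]
In degree $1$ the first summand vanishes because $\ke$ is strongly rigid, while the second is isomorphic to $\Hom^1_Y(\ke,\ke)$ since $\ke\cong\ke\otimes\omega_Y$; hence $(E,E)^1=0$, i.e. $E$ is rigid on $X$. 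Because $\omega_X=\ko_X$, every object of $\Db(X)$ is fixed by $(-)\otimes\omega_X$, so exhibiting $\kf$ as a rigid direct summand of $E$ will automatically make it strongly rigid.

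\emph{Descent.} The difficulty is that $\pi_*\pi^*\ke\cong\ke\oplus(\ke\otimes\omega_Y)\cong\ke^{\oplus2}$, so $E$ is ``twice too large'' and I must extract the right half. Fixing an isomorphism $u\colon\ke\isomor\ke\otimes\omega_Y$ and forgetting linearizations yields an automorphism $\til{u}=\Forg_G(u)\in\Aut(E)$ which is $G$-anti-invariant, in the sense that it intertwines the linearization $\lambda$ of $\ke$ with its sign-twist: $\iota^*(\til{u})=-\lambda\circ\til{u}\circ\lambda^{-1}$. When $\ke$ is simple one has $\End_X(E)=\CC\oplus\CC\,\til{u}$ with $\til{u}^2$ a nonzero scalar; after rescaling $u$ so that $\til{u}^2=\id$, the elements $\tfrac12(\id\pm\til{u})$ are orthogonal idempotents giving $E\cong E_+\oplus E_-$. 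The anti-invariance exchanges these idempotents, whence $\iota^*E_+\cong E_-$ and therefore $\Forg_G(\pi_*E_+)=E_+\oplus\iota^*E_+\cong E$. Setting $\kf:=E_+$, the inclusion $E_+\hookrightarrow E=\Forg_G(\ke)$ is adjoint, under $\Inf_G\dashv\Forg_G$, to a morphism $\pi_*\kf=\Inf_G\kf\to\ke$ whose image under the faithful (hence conservative) functor $\Forg_G$ is the above isomorphism; thus $\pi_*\kf\cong\ke$, and $\kf$ is rigid as a summand of the rigid $E$.

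\emph{The main obstacle} is the general, non-simple case, where $\til{u}^2$ is merely an automorphism of $\ke$ and the idempotents above are unavailable. There I would decompose $E$ into indecomposables by Krull--Schmidt, record the involution that $\iota^*$ induces on the set of indecomposable summands, and assemble $\kf$ from one representative of each $\iota^*$-orbit together with a descended half of each $\iota^*$-fixed summand. The delicate point is never the underlying splitting on $X$ but the matching of $G$-linearizations, i.e. upgrading an isomorphism $\Forg_G(\pi_*\kf)\cong\Forg_G(\ke)$ to an isomorphism $\pi_*\kf\cong\ke$ in $\Db_G(X)$. This is exactly where the hypothesis $\ke\cong\ke\otimes\omega_Y$ enters: it guarantees that the relevant descent datum is unobstructed (equivalently, that $\ke$ lies in the image of $\pi_*$, since $\pi_*\kf\otimes\omega_Y\cong\pi_*(\kf\otimes\pi^*\omega_Y)\cong\pi_*\kf$), and conservativity of $\Forg_G$ then promotes the adjunction morphism to the desired isomorphism.
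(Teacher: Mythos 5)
Your opening step---rigidity of $\pi^*\ke$ via the adjunction $\pi^*\dashv\pi_*$ and the projection formula $\pi_*\pi^*\ke\cong\ke\oplus(\ke\otimes\omega_Y)$---is correct, and so is the observation that any direct summand of the rigid object $\pi^*\ke$ on the K3 surface $X$ is automatically strongly rigid. Where you diverge from the paper is in how $\kf$ is produced. The paper does not construct it at all: it invokes the descent lemma for canonical covers (\cite[Lemma 7.16]{Hu1}, going back to Bridgeland--Maciocia), which says that the single hypothesis $\ke\otimes\omega_Y\cong\ke$ already guarantees $\ke\cong\pi_*\kf$ for some $\kf\in\Db(X)$; rigidity of $\kf$ then drops out of $\Hom^1_{\Db(Y)}(\ke,\ke)\cong\Hom^1_{\Db(X)}(\kf,\kf)\oplus\Hom^1_{\Db(X)}(\iota^*\kf,\kf)$. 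You instead try to re-prove this descent by splitting $\pi^*\ke$ with the idempotents $\tfrac12(\id\pm\til{u})$. That argument is essentially complete, and rather pleasant, when $\ke$ is simple.

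The gap is the general case, which is the one actually needed: in Proposition \ref{prop:rigid} the lemma is applied to arbitrary strongly rigid sheaves and complexes, which are typically decomposable. Your sketch (Krull--Schmidt, the $\iota^*$-action on indecomposable summands, ``a descended half of each $\iota^*$-fixed summand'') points at exactly where the difficulty lives but does not resolve it. Two steps are unjustified. First, for an $\iota^*$-invariant indecomposable summand you must actually produce a linearization and control multiplicities; this is the content of the cited descent lemma, which is proved by induction on the length of the complex, not by formal bookkeeping of summands. Second, the claim that conservativity of $\Forg_G$ ``promotes the adjunction morphism to the desired isomorphism'' is not automatic: conservativity helps only once you have exhibited a specific morphism $\pi_*\kf\to\ke$ and verified that its image under $\pi^*$ is an isomorphism, and knowing merely that $\pi^*\pi_*\kf\cong\pi^*\ke$ as objects is insufficient---for instance $\km\oplus\kn$ and $\km\oplus(\kn\otimes\omega_Y)$ have isomorphic pullbacks without being isomorphic or $\omega_Y$-twists of one another. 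Finally, the parenthetical ``equivalently, that $\ke$ lies in the image of $\pi_*$, since $\pi_*\kf\otimes\omega_Y\cong\pi_*\kf$'' is circular, since it presupposes the $\kf$ you are constructing. The clean repair is to quote the descent lemma as the paper does; your computation then stands as a correct (and mildly different) proof of the rigidity half of the statement.
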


\begin{proof} Since $\ke$ is strongly rigid, $\ke\cong\ke\otimes\omega_Y$.
By \cite[Lemma 7.16]{Hu1}, there exists $\kf\in\Db(X)$ with
$\ke=\pi_*\kf$. The observation that
$\pi^*\pi_*\ke=\ke\oplus\iota^*\ke$ yields  the following list of
isomorphisms:
\begin{equation*}\begin{split}
\Hom_{\Db(Y)}^i(\ke,\ke)&\cong\Hom_{\Db(Y)}^i(\pi_*\kf,\pi_*\kf)\\&\cong\Hom_{\Db(X)}^i(\pi^*\pi_*\kf,\kf)\\&\cong\Hom_{\Db(X)}^i(\kf\oplus\iota^*\kf,\kf)\\&\cong\Hom_{\Db(X)}^i(\kf,\kf)\oplus\Hom_{\Db(X)}^i(\iota^*\kf,\kf).
\end{split}\end{equation*}
Therefore if $\ke$ is strongly rigid, $\kf$ is strongly rigid as
well.\end{proof}

\begin{lem}\label{lem:Mukai} Consider in $\Db(Y)$ a triangle
\[
\ka\lto\ke\lto\kb\lto\ka[1]
\]
such that $(\ka,\kb)^r=(\kb,\kb)^s=0$, for $r\leq 0$ and $s<0$. Assume moreover that $\ke\cong\ke\otimes\omega_Y$ and $\Hom_{\Db(Y)}^{\leq 0}(\ka,\kb\otimes\omega_Y)=0$. Then $\ka\cong\ka\otimes\omega_Y$, $\kb\cong\kb\otimes\omega_Y$ and $(\ka,\ka)^1+(\kb,\kb)^1\leq (\ke,\ke)^1$.
\end{lem}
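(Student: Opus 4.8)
The plan is to establish the two isomorphisms first, by comparing the given triangle with its twist by $\omega_Y$, and then to deduce the inequality from the long exact sequences attached to the triangle together with Serre duality on $Y$. Write the given triangle as $\ka \xrightarrow{u} \ke \xrightarrow{v} \kb \xrightarrow{w} \ka[1]$. Tensoring by $\omega_Y$ and using $\ke\iso\ke\otimes\omega_Y$ I obtain a second triangle $\ka\otimes\omega_Y \xrightarrow{u'} \ke \xrightarrow{v'} \kb\otimes\omega_Y \xrightarrow{w'} (\ka\otimes\omega_Y)[1]$ on the \emph{same} object $\ke$. The composite $v\circ u'\colon\ka\otimes\omega_Y\to\kb$ lies in $\Hom^0(\ka\otimes\omega_Y,\kb)\iso\Hom^0(\ka,\kb\otimes\omega_Y)$ (since $\omega_Y^{\vee}\iso\omega_Y$), which vanishes by hypothesis; hence $u'$ factors as $u\circ f$ for some $f\colon\ka\otimes\omega_Y\to\ka$. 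Symmetrically, $v'\circ u=0$ in $\Hom^0(\ka,\kb\otimes\omega_Y)=0$ yields $g\colon\ka\to\ka\otimes\omega_Y$ with $u=u'\circ g$.

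Next I would check that $f$ and $g$ are mutually inverse. From $u=u'\circ g=u\circ f\circ g$ one gets $u\circ(f\circ g-\id_\ka)=0$; since post-composition with $u$ is injective on $\Hom^0(\ka,\ka)$ — its kernel being the image of $\Hom^{-1}(\ka,\kb)=(\ka,\kb)^{-1}=0$ in the long exact sequence of the triangle — it follows that $f\circ g=\id_\ka$, and $g\circ f=\id_{\ka\otimes\omega_Y}$ is obtained the same way from the second triangle. Thus $\ka\iso\ka\otimes\omega_Y$. The commuting square formed by $f$, $\id_\ke$ and the maps $u,u'$ extends (by the axioms) to a morphism between the two triangles; as two of its three vertical arrows are isomorphisms, so is the induced map $\kb\otimes\omega_Y\to\kb$, giving $\kb\iso\kb\otimes\omega_Y$.

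For the inequality I would apply $\Hom(\ke,-)$ to the triangle; the exact piece $\Hom^1(\ke,\ka)\xrightarrow{u_*}\Hom^1(\ke,\ke)\xrightarrow{v_*}\Hom^1(\ke,\kb)$ gives $(\ke,\ke)^1=\dim\im u_*+\dim\im v_*$. The decisive input is Serre duality on the Enriques surface $Y$, which yields $(\kb,\ka)^2=\dim\Hom^0(\ka,\kb\otimes\omega_Y)=0$. Using it I would bound both summands. First, $\dim\im v_*\geq(\kb,\kb)^1$: precomposition with $v$ embeds $\Hom^1(\kb,\kb)$ into $\Hom^1(\ke,\kb)$ (injective since $(\ka,\kb)^0=0$), and the image lies in $\ker\partial=\im v_*$, because the connecting map sends $\beta\circ v$ to $w[1]\circ\beta\circ v$ with $w[1]\circ\beta\in\Hom^2(\kb,\ka)=0$. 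Second, $\dim\im u_*\geq(\ka,\ka)^1$: the vanishing $(\kb,\ka)^2=0$ makes $u^*\colon\Hom^1(\ke,\ka)\to\Hom^1(\ka,\ka)$ surjective, while $\ker u_*$ (the image of the connecting map from $\Hom^0(\ke,\kb)$) is contained in $\ker u^*$, because every $\psi\in\Hom^0(\ke,\kb)$ factors as $\beta\circ v$ (again by $(\ka,\kb)^0=0$), so $w\circ\psi$ factors through $v$; hence $u_*$ is injective on a complement of $\ker u^*$, of dimension $(\ka,\ka)^1$. Combining gives $(\ke,\ke)^1\geq(\ka,\ka)^1+(\kb,\kb)^1$.

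The main obstacle is the presence of the unconstrained groups $\Hom^{\bullet}(\kb,\ka)$ in the long exact sequences: the whole argument hinges on killing $\Hom^2(\kb,\ka)$, which is exactly what Serre duality plus the hypothesis on $\Hom^0(\ka,\kb\otimes\omega_Y)$ provide, and on the observation that $\ker u_*\subseteq\ker u^*$, which is what lets the sub- and quotient-contributions to $(\ke,\ke)^1$ be separated cleanly rather than interfering through the connecting maps.
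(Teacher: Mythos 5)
Your proposal is correct and follows essentially the same route as the paper: the isomorphisms $\ka\iso\ka\otimes\omega_Y$ and $\kb\iso\kb\otimes\omega_Y$ are obtained by comparing the triangle with its $\omega_Y$-twist, using $\Hom_{\Db(Y)}(\ka,\kb)=\Hom_{\Db(Y)}(\ka,\kb\otimes\omega_Y)=0$ to produce the comparison maps and $\Hom^{-1}_{\Db(Y)}(\ka,\kb)=0$ to force the composites to be identities, exactly as in the paper's ladder diagram. For the inequality the paper simply defers to the argument of \cite[Lemma 2.7]{HMS} adapted via Serre duality, and your long-exact-sequence computation (with $\Hom^2(\kb,\ka)\cong\Hom^0(\ka,\kb\otimes\omega_Y)^\vee=0$ as the decisive vanishing) is precisely that argument written out in full.
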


\begin{proof} Let $f:\ke\to\ke\otimes\omega_Y$ be the isomorphism in the hypotheses. Clearly $\Hom_{\Db(Y)}(\ka,\kb)=0$ and, since $\omega_Y^{\otimes 2}=\ko_Y$, $\Hom_{\Db(Y)}(\ka,\kb\otimes\omega_Y)\cong\Hom_{\Db(Y)}(\ka\otimes\omega_Y,\kb)=0$. Thus there exist morphisms $f_1$, $f_2$, $g_1$ and $g_2$ making the following diagram commutative:
\begin{eqnarray}\label{eqn:dia1}
\xymatrix@1{\ka\ar[d]_{f_1}\ar[r]\ar@/_2pc/[dd]_{h_1} &
\ke\ar[d]_{f}\ar[r]\ar@/^2pc/[dd]|\hole^(.3){\id}&
\kb\ar[d]_{f_2}\ar@/^2pc/[dd]^{h_2}\\
\ka\otimes\omega_Y\ar[d]_{g_1}\ar[r]&
\ke\otimes\omega_Y\ar[d]_{f^{-1}}\ar[r]&
\kb\otimes\omega_Y\ar[d]_{g_2}\\
\ka\ar[r]&\ke\ar[r]&\kb.}
\end{eqnarray}
Since $\Hom^{-1}_{\Db(Y)}(\ka,\kb)=0$, the morphisms $h_1$ and $h_2$ are uniquely determined and so they must be the identity. Repeating the same argument, starting from the triangle $\ka\otimes\omega_Y\lto\ke\otimes\omega_Y\lto\kb\otimes\omega_Y\lto\ka\otimes\omega_Y[1]$, one immediately concludes that $f_1$ and $f_2$ are isomorphisms.

The fact that $(\ka,\ka)^1+(\kb,\kb)^1\leq (\ke,\ke)^1$ is now obtained repeating the proof of \cite[Lemma 2.7]{HMS}, simply using Serre duality and the isomorphisms $f_1$ and $f_2$ previously defined.\end{proof}

In the specific case of Enriques surfaces we will need a third class of objects in the derived category.

\begin{definition}\label{def:quasisph} An object $\ke\in\Db(Y)$ is \emph{quasi-spherical} if $\ke\cong\kf\oplus\kf\otimes\omega_Y$ where $\pi^*\kf$ is a spherical object.\end{definition}

Notice that, by definition, $\pi^*\kf$ in the previous definition is automatically $G$-invariant. We now complete the proof of the last statement in Theorem \ref{thm:main1}.

\begin{prop}\label{prop:rigid} Let $Y$ be a generic Enriques surface. Then $\Sph(Y)$ is empty, while
$\Rig(Y)$ consists of objects which are extensions of quasi-spherical objects.\end{prop}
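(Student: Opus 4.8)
The plan is to reduce both assertions to the K3 cover $X$ via Lemma~\ref{lem:rigupst} and to exploit that, for generic $Y$, $\iota^*$ acts as the identity on $\kn(X)$ (Lemma~\ref{lem:inv}). The emptiness of $\Sph(Y)$ should follow from a purely numerical computation. A spherical object is strongly rigid, so any $\ke\in\Sph(Y)$ has the form $\pi_*\kf$ with $\kf$ rigid on $X$. Using $\pi^*\pi_*\kf\cong\kf\oplus\iota^*\kf$ together with adjunction, exactly as in the proof of Lemma~\ref{lem:rigupst}, one gets $\chi(\ke,\ke)=\chi(\kf,\kf)+\chi(\iota^*\kf,\kf)$; since $v(\iota^*\kf)=v(\kf)$ by Lemma~\ref{lem:inv}, the two summands agree and this equals $2\chi(\kf,\kf)=-2\,v(\kf)^2$. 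Writing $v(\kf)=(r,\ell,s)$ we have $v(\kf)^2=\ell^2-2rs$ with $\ell^2$ even by Remark~\ref{rmk:nocurves}, so $v(\kf)^2$ is even and $\chi(\ke,\ke)\in 4\ZZ$; but a spherical object satisfies $\chi(\ke,\ke)=(\ke,\ke)^0+(\ke,\ke)^2=2$, a contradiction. I expect no difficulty here.

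For the structure of $\Rig(Y)$ I would again write a strongly rigid $\ke$ as $\pi_*\kf$ with $\kf$ rigid on $X$, and first decompose $\kf$ on the K3 side into spherical pieces. Fixing a $G$-invariant stability condition on $X$ (available since $\Gamma_X\neq\emptyset$, cf.\ the proof of Proposition~\ref{prop:connEnr}), I would pass to the Harder--Narasimhan filtration of $\kf$ and apply the inequality \cite[Lemma~2.7]{HMS} to its successive cones: as the phases strictly decrease, the relevant $\Hom$'s in non-positive degrees vanish and the rigidity of $\kf$ propagates to each semistable factor. A semistable rigid object lies in a finite-length heart, where one argues that its stable Jordan--H\"older factors $S_i$ are spherical and that extensions between distinct factors split, so that $\kf$ is a direct sum of stable spherical objects.

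The genericity of $Y$ then enters through the observation that each $S_i$ is automatically $G$-invariant. Indeed $\iota^*S_i$ is again stable and spherical of the same phase (the chosen stability condition being $G$-invariant), and $v(\iota^*S_i)=v(S_i)$ by Lemma~\ref{lem:inv}, so $\chi(\iota^*S_i,S_i)=-v(S_i)^2=2$; were $\iota^*S_i\not\cong S_i$, stability would force $\Hom(\iota^*S_i,S_i)=\Hom^2(\iota^*S_i,S_i)=0$ and hence $\chi(\iota^*S_i,S_i)=-(\iota^*S_i,S_i)^1\le 0$, a contradiction. Being $G$-invariant and simple, $S_i$ admits a linearization and descends, $S_i\cong\pi^*\kf_i$, so that $\pi_*S_i\cong\kf_i\oplus\kf_i\otimes\omega_Y$ is quasi-spherical (Definition~\ref{def:quasisph}). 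Since $\pi_*$ is exact, it carries the spherical filtration of $\kf$ to a filtration of $\ke=\pi_*\kf$ whose subquotients are the quasi-spherical objects $\pi_*S_i$, which is the assertion; the computation of the first paragraph gives the reverse inclusion.

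The hard part will be the within-phase step isolated above: proving that the stable factors of a semistable rigid object on $X$ are spherical. The Harder--Narasimhan reduction through \cite[Lemma~2.7]{HMS} cannot reach inside a single phase, where $\Hom^0$ between factors need not vanish, so instead one must argue that a stable factor $S$ with $(S,S)^1=v(S)^2+2>0$ would vary in a positive-dimensional family and thereby deform $\kf$, contradicting $(\kf,\kf)^1=0$. Turning this deformation heuristic into a rigorous statement about iterated extensions in the $2$-Calabi--Yau category $\Db(X)$ is the technical heart of the proof; the remaining steps are formal or follow at once from Lemma~\ref{lem:inv}.
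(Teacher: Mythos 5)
Your argument that $\Sph(Y)=\emptyset$ is correct and is essentially the paper's: lift via Lemma \ref{lem:rigupst}, use $\Hom^i(\ke,\ke)\cong\Hom^i(\kf,\kf)\oplus\Hom^i(\iota^*\kf,\kf)$ and the triviality of $\iota^*$ on $\kn(X)$ to get $\chi(\ke,\ke)=2\chi(\kf,\kf)\in 4\ZZ$, incompatible with $\chi(\ke,\ke)=2$. (You do not even need Remark \ref{rmk:nocurves} here: $v(\kf)^2$ is even for any Mukai vector.)

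The second half, however, has a genuine gap, and it sits exactly where you say it does. Your plan hinges on the claim that a rigid object $\kf\in\Db(X)$ on the K3 cover, decomposed with respect to a $G$-invariant Bridgeland stability condition, has all its stable factors spherical; the Harder--Narasimhan reduction via \cite[Lemma 2.7]{HMS} only controls the interaction between distinct phases, and within a single phase the required $\Hom$-vanishing between a subobject and a Jordan--H\"older quotient simply fails, so the Mukai-type inequality cannot be applied as stated. You flag this as ``the technical heart'' but do not supply it, and the deformation-theoretic heuristic you invoke is not a proof. Worse, your intermediate assertion that ``extensions between distinct factors split, so that $\kf$ is a direct sum of stable spherical objects'' is false in general (a non-split extension of two non-isomorphic stable spherical objects of the same phase can again be rigid without being a direct sum), and it contradicts the very statement you are proving, which only claims that elements of $\Rig(Y)$ are \emph{extensions} of quasi-spherical objects. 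The paper avoids all of this by working entirely downstairs on $Y$ with classical slope stability for an ample class $\ell$: it first reduces to sheaves and kills the torsion part (using Remark \ref{rmk:nocurves}), then reduces to $\mu_\ell$-semistable sheaves via the HN filtration and Lemma \ref{lem:Mukai}, and finally performs the delicate within-slope step by explicitly constructing the short exact sequence \eqref{eqn:1} so that the hypotheses $\Hom(\kg_1,\kg_2)=\Hom(\kg_1,\kg_2\otimes\omega_Y)=0$ of Lemma \ref{lem:Mukai} hold, inducting on the length of the JH filtration; the quasi-spherical pieces $\ke\oplus\ke\otimes\omega_Y$ then emerge from the case analysis on the $\mu_\ell$-stable factors, with $\pi^*\ke$ checked to be spherical by adjunction. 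Your $G$-invariance and descent observations for the stable factors are correct, but without a rigorous substitute for the within-phase decomposition on $X$ the proof is incomplete.
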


\begin{proof} Let $\ke\in\Db(Y)$ be spherical. Due to Lemma \ref{lem:rigupst}, there exists $\kg\in\Rig(X)$ such that $\ke=\pi_*\kg$. Then, by the argument in the proof of Lemma \ref{lem:rigupst},
\[
\Hom^i_{\Db(Y)}(\ke,\ke)\cong\Hom^i_{\Db(X)}(\kg,\kg)\oplus\Hom^i_{\Db(X)}(\iota^*\kg,\kg).
\]
This implies that $\chi(\ke,\ke)=\chi(\kg,\kg)+\chi(\iota^*\kg,\kg)=2\chi(\kg,\kg)=4$. The penultimate equality is due to the fact that, being $Y$ generic, $\iota^*$ acts as the identity on $H^0(X,\ZZ)\oplus\NS(X)\oplus H^4(X,\ZZ)$ (Lemma \ref{lem:inv}). In particular, $\ke$ is not a $(-2)$-class.

Suppose now that $\ke'\in\coh(Y)$ is strongly rigid and consider
the short exact sequence
\[
0\lto\ke'_\mathrm{tor}\lto\ke'\lto\kf\lto 0,
\]
where $\ke'_\mathrm{tor}$ is the torsion part of $\ke'$ and $\kf$
is torsion free. Since, clearly,
$$\Hom_{\Db(Y)}(\ke'_\mathrm{tor},\kf)=\Hom_{\Db(Y)}(\ke'_\mathrm{tor},\kf\otimes\omega_Y)=0,$$
we can now apply Lemma \ref{lem:Mukai}, concluding that
$\ke'_\mathrm{tor}$ and $\kf$ are both strongly rigid objects.
Since $Y$ is generic, by Remark \ref{rmk:nocurves}, it does not
contain rational curves and $\ke'_{\mathrm{tor}}$ must be
supported on points. On the other hand, a very easy computation
shows that there are no strongly rigid objects in $\coh(Y)$
supported on points and so $\ke'_\mathrm{tor}=0$.

Fix an ample polarization $\ell$ on $Y$ and consider the
HN-filtration of $\kf$ and the exact sequence
\[
0\lto\kf_1\lto\kf\lto\kf_2\lto 0,
\]
where $\kf_2$ is the first $\mu_\ell$-semistable factor in the
filtration. Since $\omega_Y$ is a torsion class,
$\mu_\ell^-(\kf_1)>\mu_\ell(\kf_2)=\mu_\ell(\kf_2\otimes\omega_Y)$
(here $\mu_\ell^-(\kf_1)$ is the slope of the last
$\mu_\ell$-semistable factor in the HN-filtration of $\kf_1$).
Therefore,
$\Hom_{\Db(Y)}(\kf_1,\kf_2)=\Hom_{\Db(Y)}(\kf_1,\kf_2\otimes\omega_Y)=0$
and Lemma \ref{lem:Mukai} applies allowing us to conclude that
$\kf_1$ and $\kf_2$ are strongly rigid as well. Thus, by induction
on the length of the HN-filtration, we can assume that $\kf$ is
$\mu_\ell$-semistable.

Notice that if $\ke\in\coh(Y)$ is $\mu_\ell$-stable then
$\ke\otimes\omega_Y$ is $\mu_\ell$-stable as well. Hence there are
two possibilities: Either $\kf$ has at most two $\mu_\ell$-stable
factors $\ke$ and $\ke\otimes\omega_Y$ or there are
$\ke_1,\ke_2\in\coh(Y)$ which are $\mu_\ell$-stable factors of
$\kf$ but $\ke_1\not\cong\ke_2$ and
$\ke_1\not\cong\ke_2\otimes\omega_Y$.

\medskip

If we are in the first case, since
$\chi(\ke,\ke)=\chi(\ke,\ke\otimes\omega_Y)=\chi(\ke\otimes\omega_Y,\ke\otimes\omega_Y)$,
the following holds true
\[
0<\chi(\kf,\kf)=k^2\chi(\ke,\ke),
\]
where $k$ is the number of $\mu_\ell$-stable factors of $\kf$.

Since $\ke$ is stable and
$\mu_\ell(\ke)=\mu_\ell(\ke\otimes\omega_Y)$, if
$\Hom^2_{\Db(Y)}(\ke,\ke)\neq 0$, then
$\ke\cong\ke\otimes\omega_Y$. Hence
$\chi(\ke,\ke)=2-(\ke,\ke)^1>0$ implies $(\ke,\ke)^1=0$ (notice
that $(\ke,\ke)^1$ is even in this case). In particular $\ke$
should be spherical, which is impossible.

Suppose now that $\Hom_{\Db(Y)}^2(\ke,\ke)=0$. Obviously,
$\chi(\ke,\ke)=1-(\ke,\ke)^1>0$ and so $(\ke,\ke)^1=0$. Thus
$\kf\cong\ke^{\oplus m}\oplus(\ke\otimes\omega_Y)^{\oplus m}$, for
some positive integer $m$.  Let us show that $\ke$ is locally
free. Indeed, it is enough to apply Lemma \ref{lem:Mukai} to the
triangle
\[
\kt[-1]\lto\kf\lto\kf^{\vee\vee}\lto\kt,
\]
where $\kt$ is a torsion sheaf supported on points. Since
$$\Hom_{\Db(Y)}^{\leq
0}(\kt[-1],\kf^{\vee\vee})=\Hom_{\Db(Y)}^{\leq
0}(\kt[-1],\kf^{\vee\vee}\otimes\omega_Y)=0,$$ Lemma
\ref{lem:Mukai} implies once more that $\kt$ should be strongly
rigid which is a contradiction unless $\kt=0$. Hence $\kf$ is
locally free. Observe that $\pi^*\ke$ is spherical. Indeed, by
adjunction,
$\Hom_{\Db(X)}^i(\pi^*\ke,\pi^*\ke)=\Hom_{\Db(Y)}^i(\ke,\pi_*\pi^*\ke)=\Hom_{\Db(Y)}^i(\ke,\ke\oplus\ke\otimes\omega_Y)$
and thus, by Serre duality,
\[
\Hom_{\Db(X)}^i(\pi^*\ke,\pi^*\ke)\cong\left\lbrace\begin{array}{ll}\Hom_{\Db(Y)}(\ke,\ke)&\mbox{if
}i\in\{0,2\}\\0&\mbox{otherwise}.\end{array}\right.
\]
This completes the case of $\kf$ with at most two stable factors $\ke$ and
$\ke\otimes\omega_Y$.

\medskip

Assume that there are two sheaves $\ke_1$ and $\ke_2$ which are
$\mu_\ell$-stable factors of $\kf$ but $\ke_1\not\cong\ke_2$ and
$\ke_1\not\cong\ke_2\otimes\omega_Y$. We want to show that there
exists a short exact sequence
\begin{eqnarray}\label{eqn:1}
0\lto\kg_1\lto\kf\lto\kg_2\lto 0,
\end{eqnarray}
with $\kg_2$ which is extension only of $\ke$ and
$\ke\otimes\omega_Y$, for some $\mu_\ell$-stable sheaf $\ke$, and
$\Hom_{\Db(Y)}(\kg_1,\ke)=\Hom_{\Db(Y)}(\kg_1,\ke\otimes\omega_Y)=0$.

In this case, by construction,
$\Hom_{\Db(Y)}(\kg_1,\kg_2)=\Hom_{\Db(Y)}(\kg_1,\kg_2\otimes\omega_Y)=0$
and so, applying Lemma \ref{lem:Mukai} to \eqref{eqn:1}, we
conclude that $\kg_1$ and $\kg_2$ are both strongly rigid, $\kg_2$
is as in the previous case and the number of $\mu_\ell$-stable
factors of $\kg_1$ is smaller than the one of $\kf$. Hence, one
proceed recursively analyzing further $\kg_1$. Since the
JH-filtration of $\kf$ is finite, the process terminates in a
finite number of steps.

To produce the short exact sequence \eqref{eqn:1}, take a
$\mu_\ell$-stable factor $\ke$ of $\kf$ with a morphism
$\kf\to\ke$. Then there exist $\kf_1$ and $\kf_2$ such that
$\kf_2$ is extension of $\ke$, they fit in the short exact
sequence
\[
0\lto\kf_1\lto\kf\lto\kf_2\lto 0,
\]
and $\Hom_{\Db(Y)}(\kf_1,\kf_2)=\Hom_{\Db(Y)}(\kf_1,\ke)=0$. If
$\Hom_{\Db(Y)}(\kf_1,\ke\otimes\omega_Y)=0$, then set
$\kg_1:=\kf_1$ and $\kg_2:=\kf_2$. Otherwise, if
$\Hom_{\Db(Y)}(\kf_1,\ke\otimes\omega_Y)\neq 0$, then, as before,
there exist $\kf_3$, $\kf_4$ and a short exact sequence
\[
0\lto\kf_3\lto\kf_1\lto\kf_4\lto 0,
\]
where $\kf_4$ is extension of $\ke\otimes\omega_Y$ and
$\Hom_{\Db(Y)}(\kf_3,\ke\otimes\omega_Y)=0$. If
$\Hom_{\Db(Y)}(\kf_3,\ke)\neq 0$, we continue filtering until we
get \eqref{eqn:1}. Again, the process terminates because the
JH-filtrations are finite.

\medskip

Consider a strongly rigid complex $\ke\in\Rig(Y)$ and let $N$ be
the maximal integer such that $\kh^N(\ke)\neq 0$. Hence we have
the following triangle
\[
\kf\lto\ke\lto\kh^N(\ke)[-N]
\]
with $\Hom_{\Db(Y)}^{\leq
0}(\kf,\kh^N(\ke)[-N])=\Hom_{\Db(Y)}^{\leq
0}(\kf,\kh^N(\ke)\otimes\omega_Y[-N])=0$. Lemma \ref{lem:Mukai}
implies that $\kh^N(\ke)$ is strongly rigid and hence locally
free. The same happens for the other cohomology sheaves,
proceeding by induction on $\kf$.\end{proof}

\begin{remark}\label{rmk:K3Enriques} Generic Enriques surfaces provide examples
of smooth surfaces with no spherical objects but plenty of
(strongly) rigid objects. The absence of spherical objects for
those surfaces is in marked contrast with the case of their
closest relatives: K3 surfaces. Indeed, in that case, spherical
objects are always present (at least in the untwisted case). As
was proved in \cite{HMS}, the only way to reduce drastically the
number of (strongly) rigid and spherical objects is to pass to
twisted or generic analytic K3 surfaces.
\end{remark}

\section{Local Calabi--Yau varieties}\label{sec:CY}

We consider some further situations in which the techniques of
Section \ref{subsec:stabex} can be applied. In Section
\ref{subsec:OmegaPN} we compare the spaces of stability conditions
on projective spaces $\PP^N$and those on their canonical bundles
$|\omega_{\PP^N}|$. In this case the relation is slightly weaker
than before and we obtain only a map between particular open
subsets. For $N=1$, where the two stability manifolds can be
completely described, the open subset of $\Stab(\PP^1)$ in
question corresponds to the ``non-degenerate'' stability
conditions, while the one of  $\Stab(|\omega_{\PP^2}|)$ to a
fundamental domain with respect to the action of the group of
autoequivalences.

In Section \ref{subsec:moreex} we examine spaces of stability
conditions for resolutions of Kleinian singularities (with
particular attention to the case of $A_2$-singularities) and
compare them with the spaces of stability conditions of the
corresponding quivers. We then conclude by studying the case of
local K3 surfaces (analytically) embedded into projective ones.
Here the relation is even weaker: only few stability conditions on
projective K3 surfaces induce stability conditions on the local
ones. This can be intuitively understood by thinking of the space
of stability conditions on the canonical bundle over the
projective line as a sort of ``limit'' of the stability manifold
for projective K3 surfaces.

\bigskip

\subsection{Stability conditions on canonical bundles on projective spaces}\label{subsec:OmegaPN}~

\medskip

We start by recalling briefly the notion of mutation of
exceptional objects in $\Db(\PP^N)$. Let $(\ke,\kf)$ be a strong
exceptional pair. We define the objects $\mc{L}_\ke \kf$ and
$\mc{R}_\kf \ke$ (which we call \emph{left mutation} and
\emph{right mutation} respectively) by means of the distinguished
triangles
$$\mc{L}_\ke \kf\to\Hom_{\Db(\PP^N)}(\ke,\kf)\otimes\ke\to\kf,$$
$$\ke\to\Hom_{\Db(\PP^N)}(\ke,\kf)^\vee\otimes\kf\to \mc{R}_\kf \ke.$$
A \emph{mutation} of a strong exceptional collection $E=\grf{\ke_0,\ldots,\ke_n}$ is defined as a mutation of a pair of adjacent objects in $E$ and it will be denoted by
\begin{align*}
\mc{R}_i E=& \grf{\ke_0, \ldots ,\ke_{i-1},\ke_{i+1}, \mc{R}_{\ke_{i+1}}\ke_i,\ke_{i+2}, \ldots,\ke_n},\\
\mc{L}_i E =& \grf{\ke_0,\ldots,\ke_{i-1}, \mc{L}_{\ke_i} \ke_{i+1},\ke_i, \ke_{i+2}, \ldots ,\ke_n},
\end{align*}
for $i\in\{0, \ldots , n-1\}$. By a result of Bondal (\cite[Assertion 9.2]{Bond}), applying a mutation to a complete strong exceptional collection consisting of sheaves on $\PP^N$ we get a complete strong exceptional collection consisting of sheaves as well. An \emph{iterated mutation} of a strong exceptional collection is the iterated application of a finite number of mutations.

\medskip

An open subset of the stability manifold of $\Db (\PP^N)$ was studied in \cite{emolo1}.
Let $E=\grf{\ke_0, \ldots,\ke_N}$ be a strong complete exceptional collection on $\Db (\PP^N)$ consisting of sheaves. Define $\Ss_{E} (\PP^N)$ as the union of the open subsets $\Theta_{F}$ (see Example \ref{ex:PN}) over all iterated mutations $F$ of $E$. It is proved in \cite[Cor.\ 3.20]{emolo} that $\Ss_E  (\PP^N) \cont \Stab (\PP^N)$ is an open and connected $(N+1)$-dimensional submanifold. For $N=1$ all strong complete exceptional collections are obtained as iterated mutations of $$O := \grf{\ko_{\PP^1}, \ko_{\PP^1} (1)}$$ and $\Sigma_{O} (\PP^1)$ is equal to the full stability manifold $\Stab (\PP^1)$ (\cite{emolo,Okada1}).

Let $X$ be the total space of the canonical bundle $V := \ww_{\PP^N} \os{\pp}{\to} \PP^N$ and let $i : \PP^N \hookrightarrow X$ denote the zero-section and $C$ its image. Denote by $\Stab (X)$ the stability manifold of $\Db_0 (X) := \Db_C (\coh(X))$, the full triangulated subcategory of $\Db (\coh (X))$ whose objects have cohomology sheaves supported on $C$.

We want to compare the open subset $\Ss_E (\PP^N)$ with the space of stability conditions on $X$.
Consider the exact faithful functor $i_* : \Db (\PP^N) \to \Db_0 (X)$. By d\'{e}vissage, the essential image of $i_*$ generates $\Db_0 (X)$.
Take the abelian category
\begin{equation}\label{eqn:Q}
\cat{Q}_0^{E} := \langle \ke_0 [N], \ke_1 [N-1] \ldots, \ke_N \rangle
\end{equation}
and consider the open set $U_0^{E} \cont \Theta_E$ consisting of those stability conditions $\sigma\in\Theta_E$ of the form $\sigma_0^E\cdot(G,f)$, for $(G,f)\in\glpiu$, and $\sigma_0^E$ having $\cat{Q}_0^{E}$ as heart.
It is easy to see that $\cat{Q}_0^{E}$ is $i_*$-admissible.
Then, by Proposition \ref{pro:existence} (or Theorem \ref{thm:Poli}), we have an open connected subset $\til{U}_0^{E}$ of $\Stab (X)$ such that $i_*^{-1} : \til{U}_0^{E} \isomor U_0^{E}$.
Denote by $\Gg^{E}$ the open subset of $\Stab (X)$ defined as the union of the open subsets $\til{U}_0^{F}$, for $F$ an iterated mutation of $E$. By \cite[Cor.\ 3.20]{emolo} it is connected. By Lemma \ref{prop:2.5}, we have the following result.

\begin{prop}\label{prop:compPN}
The morphism $i_*^{-1} : \Gg^{E} \to \Ss_{E}  (\PP^N)$ is an open embedding.
\end{prop}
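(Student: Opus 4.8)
The plan is to show that $i_*^{-1}$ restricted to $\Gg^{E}$ is a continuous, injective, open map onto an open subset of $\Ss_{E}(\PP^N)$; since a continuous injective open map is a homeomorphism onto its (open) image, this is exactly an open embedding. Continuity is Lemma \ref{prop:2.5}. That $i_*^{-1}$ sends $\Gg^E$ into $\Ss_{E}(\PP^N)$ is built into the definitions: $\Gg^E=\bigcup_F\til U_0^F$ over the iterated mutations $F$ of $E$, and $i_*^{-1}(\til U_0^F)=U_0^F\cont\Theta_F\cont\Ss_{E}(\PP^N)$. So two points remain, namely that the map is open and that it is injective on $\Gg^E$.

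For openness I would use the open cover $\{\til U_0^F\}$ of $\Gg^E$. Each $\til U_0^F$ is open in $\Stab(X)$, and, as recalled just before the statement (via Proposition \ref{pro:existence}), the restriction $i_*^{-1}\rest{\til U_0^F}\colon\til U_0^F\isomor U_0^F$ is a homeomorphism onto the open subset $U_0^F$ of $\Stab(\PP^N)$. As $\Ss_{E}(\PP^N)$ is open in $\Stab(\PP^N)$, each $U_0^F$ is open in $\Ss_{E}(\PP^N)$ as well. Hence $i_*^{-1}\rest{\Gg^E}$ is a local homeomorphism onto $\bigcup_F U_0^F$, in particular an open map whose image is open in $\Ss_{E}(\PP^N)$.

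The heart of the matter is injectivity, i.e.\ that the sheets $\til U_0^F$ are glued compatibly. Suppose $i_*^{-1}\sigma_1'=i_*^{-1}\sigma_2'=:\sigma$ with $\sigma_j'\in\til U_0^{F_j}$. Since $(i_*)_*$ is an isomorphism of Grothendieck groups (Proposition \ref{pro:existence}) and the central charge of $i_*^{-1}\sigma_j'$ is $Z_{\sigma_j'}\circ(i_*)_*$, the equality $Z_{\sigma_1'}\circ(i_*)_*=Z_{\sigma_2'}\circ(i_*)_*$ gives $Z_{\sigma_1'}=Z_{\sigma_2'}$; by Remark \ref{rmk:tstruct} it then suffices to show that $\sigma_1'$ and $\sigma_2'$ have the same heart. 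Writing $\sigma_j'=i_*(\sigma_0^{F_j})\cdot(G_j,f_j)$ with $\sigma_0^{F_j}$ of heart $\cat{Q}_0^{F_j}$, and using the $\glpiu$-equivariance of $i_*^{-1}$ (Remark \ref{rmk:HN}(iii)), after translating by $(G_2,f_2)^{-1}$ one reduces to comparing $i_*(\sigma_0^{F_1})\cdot(H,h)$ with $i_*(\sigma_0^{F_2})$, where $(H,h):=(G_1,f_1)(G_2,f_2)^{-1}$ and $\sigma_0^{F_1}\cdot(H,h)=\sigma_0^{F_2}$. Setting $b:=h(0)$, the heart of the first is $\mc{P}_{i_*(\sigma_0^{F_1})}((b,b+1])$, while by Proposition \ref{pro:existence} the heart of the second is $\langle i_*(\cat{Q}_0^{F_2})\rangle$; moreover $\cat{Q}_0^{F_2}=\mc{P}_{\sigma_0^{F_1}}((b,b+1])$ is $i_*$-admissible, so $\langle i_*(\cat{Q}_0^{F_2})\rangle$ is the heart of a bounded $t$-structure by Lemma \ref{lem:inducedtstr}. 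Because $i_*$ carries $\sigma_0^{F_1}$-semistable objects to $i_*(\sigma_0^{F_1})$-semistable objects of the same phase (Proposition \ref{pro:existence}) and $\mc{P}_{i_*(\sigma_0^{F_1})}((b,b+1])$ is extension-closed, we get the inclusion $\langle i_*(\cat{Q}_0^{F_2})\rangle\cont\mc{P}_{i_*(\sigma_0^{F_1})}((b,b+1])$ of hearts of bounded $t$-structures. Since an inclusion of one bounded heart into another forces equality, the two hearts coincide, and with $Z_{\sigma_1'}=Z_{\sigma_2'}$ this yields $\sigma_1'=\sigma_2'$.

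The delicate point, which I expect to be the main obstacle, is exactly this gluing: the inducing construction of Proposition \ref{pro:existence} is available only for stability conditions with an $i_*$-admissible heart, and a generic point of $U_0^F$ is merely a $\glpiu$-translate of $\sigma_0^F$, hence need not have admissible heart. The way around it is to translate back so that both lifts are compared through the single admissible heart $\cat{Q}_0^{F_2}=\mc{P}_{\sigma_0^{F_1}}((b,b+1])$, at which point Lemma \ref{lem:inducedtstr} together with the rigidity of bounded $t$-structures (containment implies equality) closes the gap. With injectivity in hand, $i_*^{-1}\rest{\Gg^E}$ is a continuous injective open map onto the open set $\bigcup_F U_0^F\cont\Ss_{E}(\PP^N)$, which is precisely the asserted open embedding.
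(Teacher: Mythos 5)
Your proof is correct and takes essentially the same route as the paper, which deduces the statement from Lemma \ref{prop:2.5} together with the homeomorphisms $i_*^{-1}\colon\til{U}_0^F\isomor U_0^F$ coming from Proposition \ref{pro:existence}. The paper offers no further detail beyond citing Lemma \ref{prop:2.5}, so your explicit check that the charts $\til{U}_0^F$ glue injectively (reducing to a single admissible heart and using that a containment of bounded hearts forces equality) spells out a point the paper leaves implicit.
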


\begin{remark}\label{rmk:imagePN}
Notice that the image of the morphism $i_*^{-1}$ consists of those stability conditions in $\Ss_E (\PP^N)$ whose heart, up to the action of $\glpiu$, is \emph{faithful}, i.e.\ its bounded derived category is equivalent to $\Db (\PP^N)$. More precisely, up to the action of $\glpiu$, the heart of a stability condition in the image of $i_*^{-1}$ is equivalent to the abelian category of finitely generated modules over the algebra $\End\left(\bigoplus \kg_i\right)$, for $G = \grf{\kg_0, \ldots, \kg_N}$ an iterated mutation of $E$.
\end{remark}

\begin{remark}\label{rmk:quivery}
The abelian categories $\Ff (\langle i_* \cat{Q}_0^{E} \rangle)$,
for a Fourier--Mukai autoequivalence $\Ff$ of $\Db_0 (X)$, are
called in \cite{B3} \emph{quivery subcategories}. If $\grf{\ks_0,
\ldots ,\ks_N}$ are the minimal objects of a quivery subcategory,
then there is a canonical cyclic ordering in which
$$\Hom^k_{\Db_0 (X)}(\ks_i,\ks_j) = 0, \ \text{unless} \ 0 \leq k \leq N \ \text{and} \ i - j \equiv k\pmod{N}.$$
An ordered quivery subcategory is a quivery subcategory in which an ordering of its minimal objects is fixed and is compatible with the canonical cyclic ordering. Bridgeland proved \cite[Thm.\ 4.11]{B3} that there is an action of the braid group $B_{N+1}$ (i.e.\ the group generated by elements $\tau_i$, indexed by the cyclic group $\ZZ/(N+1)\ZZ$, together with a single element $r$, subject to the relations $r \tau_i r^{-1} = \tau_{i+1}$, $r^{N+1} = 0$ and, if $N \geq 2$, $\tau_i \tau_{i+1} \tau_i = \tau_{i+1} \tau_i \tau_{i+1}$ and for $j-i \neq \pm 1$, $\tau_i \tau_j = \tau_j \tau_i$\footnote{Notice that the condition $N\geq 2$ is missing in \cite{B3}. Indeed, if $N=1$, $\tau_0\tau_{1}\tau_0\neq\tau_{1} \tau_0\tau_{1}$.}) on the set of ordered quivery subcategories of $\Db_0 (X)$ which essentially corresponds to tilting at minimal objects:
\begin{align*}
r \grf{\ks_0, \ldots ,\ks_N} &:= \grf{\ks_N,\ks_0, \ldots ,\ks_{N-1}}\\
\tau_i \grf{\ks_0, \ldots ,\ks_N} & := \grf{\ks_0, \ldots ,\ks_{i-2},\ks_i [-1], T_{\ks_i}\ks_{i-1},\ks_{i+1}, \ldots, \ks_N},
\end{align*}
with $i\in\{1, \ldots, N\}$. For the lower dimensional cases $N\in\{1, 2\}$ this action is free (see \cite[Thm.\ 5.6]{B3} for $N=2$, whose proof works also for $N=1$).
\end{remark}

Consider the spherical twists $T_{i_*\ke_k}$, for $\ke_k\in E$,
and take the subgroup $G_E$ of $\Aut(\Db_0 (X))$ generated by
these functors, tensorizations by line bundles, automorphisms of
$X$, and shifts. When $N=1$, \cite[Thm.\ 1.3]{IU} or, in a more
intrinsic way, Theorem \ref{thm:stab1}, \cite[Thm.\ 1.1]{B4} and
the classification of the spherical objects in $\Db_0 (X)$ (see
\cite{IU}) yields $G_E=\Aut(\Db_0 (X))$. (Notice that, for $N=1$,
all autoequivalences of $\Db_0 (X)$ are of Fourier--Mukai type, by
\cite[Appendix A]{IUU}.)

Define $\Ss_{E} (\ww_{\PP^N})$ as the open subset of $\Stab (X)$ consisting of stability conditions $\sigma\in\Stab(X)$ of the form $G\cdot\widetilde{\sigma}$, for $G\in G_E$ and $\widetilde{\sigma}\in\Gg^E$.
Using \cite[Prop.\ 4.10]{B3} and \cite[Cor.\ 3.20]{emolo}, it follows that $\Ss_{E} (\ww_{\PP^N})$ is connected. A further topological study of $\Ss_{E} (\ww_{\PP^N})$, using the description of $\Ss_E (\PP^N)$, is contained in \cite{MM}.

\begin{remark} Note that the unique open subset $\Ss (\ww_{\PP^2}) := \Ss_{\{\ko_{\PP^2}, \ko_{\PP^2} (1), \ko_{\PP^2} (2)\}} (\ww_{\PP^2})$ is a little larger then the space $\Stab^0 (|\ww_{\PP^2}|)$ of \cite[Prop.\ 2.4]{B5}. Essentially we include the action of $\glpiu$ and more degenerate stability conditions.
\end{remark}

Unfortunately, for $N>1$, Proposition \ref{prop:compPN} cannot be improved. Thus we can just prove that there is an open subset of $\Ss_E (\PP^N)$, consisting of ``non-degenerate'' stability conditions, which is isomorphic to a fundamental domain in $\Ss_E (\ww_{\PP^N})$ with respect to the action of the group $G_E$.

This picture is clearer for the case $N=1$, when both the two
spaces cover the full stability manifolds. Indeed, we have the
following result, which is a particular case of a more general
theorem due to Ishii, Uehara and Ueda \cite{IUU}. Another proof of
it may be found in \cite{Okada2}. Here we present a shorter proof
which first appeared in \cite{emolo2}. The proof of
simply-connectedness may be useful in more general situations (see
\cite[Cor.\ 4.12]{emolo1} and \cite{MM}). Moreover, it does not
rely on the structure of the group of autoequivalences, but only
on Remark \ref{rmk:quivery}.

\begin{thm}\label{thm:stab1}
$\Ss (\ww_{\PP^1}) := \Ss_{O} (\ww_{\PP^1})=\Stab (|\ww_{\PP^1}|)$. In particular, $\Stab (|\ww_{\PP^1}|)$ is connected and its open subset $\Gg:=\Gamma^O$ is a fundamental domain for the action of the autoequivalences group. Moreover, $\Stab (|\ww_{\PP^1}|)$ is simply-connected.
\end{thm}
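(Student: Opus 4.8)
The plan is to exhibit $\Stab(|\ww_{\PP^1}|)=\Stab(\Db_0(X))$ as a covering space of a period domain and to argue in the spirit of \cite{HMS}. Since $X$ is the total space of $\ww_{\PP^1}$, the category $\Db_0(X)$ is a $2$-Calabi--Yau category arising from the $A_1$-resolution, and its spherical objects are, up to shift, the line bundles $\ko_C(n)$ supported on the $(-2)$-curve $C$ (see \cite{IU}); moreover $\kn(\Db_0(X))$ has rank $2$, with $\ko_C(n)$ of class $[\ko_C]+n[\ko_x]$. Inside $\kn(\Db_0(X))\otimes\CC$ I would introduce the period domain $\mc{D}$ obtained by deleting the locally finite family of hyperplanes $\{Z:Z(\ko_C(n))=0\}$, $n\in\ZZ$, from the locus where the class of a point has non-zero central charge. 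Two facts are then needed. First, $\kz(\Stab(X))\cont\mc{D}$: this is the standard fact that a spherical object cannot have vanishing central charge in a locally finite stability condition, so that $Z(\ko_C(n))\neq 0$ for all $n$. Second, and this is where \cite{HMS} enters, the local homeomorphism $\kz$ of Theorem \ref{thm:bridmain} is in fact a \emph{covering map} $\kz:\Stab(X)\to\mc{D}$; one verifies path-lifting over $\mc{D}$ by using the spherical objects to bound which objects become semistable along a path, exactly as in \cite{HMS}.

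Granting this, the set $\Ss(\ww_{\PP^1})$ is open and connected (already established) and I claim it is also closed: if $\sigma_n\to\bar\sigma$ in $\Stab(X)$ with $\sigma_n\in\Ss(\ww_{\PP^1})$, then $\kz(\bar\sigma)\in\mc{D}$ by the first fact, so the covering structure identifies the sheet of $\Ss(\ww_{\PP^1})$ through the $\sigma_n$ with a neighbourhood of $\bar\sigma$ and forces $\bar\sigma\in\Ss(\ww_{\PP^1})$. Hence $\Ss(\ww_{\PP^1})$ is a connected component and $\kz$ restricts to a covering $\Ss(\ww_{\PP^1})\to\mc{D}$. To prove the equality $\Ss(\ww_{\PP^1})=\Stab(X)$ I would invoke Remark \ref{rmk:quivery}: in the case $N=1$ every finite-length heart occurring here is, up to the $\glpiu$-action, one of the quivery subcategories, all of which lie in the single $B_2$-orbit of $\langle i_*\cat{Q}_0^O\rangle$; since adjacent chambers differ by tilting at a minimal object, every stability condition can be connected to $\Gg$ and therefore lies in the component $\Ss(\ww_{\PP^1})$. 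This yields the first assertion and the connectedness of $\Stab(|\ww_{\PP^1}|)$.

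For the fundamental-domain statement, recall that $i_*^{-1}$ identifies $\Gg$ with the union of the chambers $\Theta_F$ attached to the iterated mutations $F$ of $O$, and that $G_O=\Aut(\Db_0(X))$ (for $N=1$) carries $\Gg$ onto its translates with $\Stab(X)=G_O\cdot\Gg$; that distinct cosets give translates meeting only along walls, so that $\Gg$ is a genuine fundamental domain, follows from the freeness in Remark \ref{rmk:quivery}. Finally, simple-connectedness drops out of the covering $\kz:\Stab(|\ww_{\PP^1}|)\to\mc{D}$: it is a regular covering whose deck group $\Delta$ is the image of $G_O$, and the monodromy around each deleted hyperplane $\{Z(\ko_C(n))=0\}$ is realized by the spherical twist along $\ko_C(n)$ (with the residual loops realized by the shift and rotation autoequivalences). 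Because, by Remark \ref{rmk:quivery}, the braid group $B_2$ acts \emph{freely} on ordered quivery subcategories, a loop whose monodromy fixes every chamber is already trivial in $\pi_1(\mc{D})$; thus the classifying surjection $\pi_1(\mc{D})\to\Delta$ is injective, the covering is the universal one, and $\pi_1(\Stab(|\ww_{\PP^1}|))=1$.

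The main obstacle is the second fact of the first paragraph: upgrading $\kz$ from a local homeomorphism to an honest covering map over $\mc{D}$. All the subsequent topology---closedness of $\Ss(\ww_{\PP^1})$, the identification $\Ss(\ww_{\PP^1})=\Stab(X)$, and the computation of the deck group---rests on path-lifting over $\mc{D}$, and this is precisely the point at which the control of semistable spherical objects from \cite{HMS} must be transported to $\Db_0(X)$. A secondary delicate point is matching the abstract deck group of the covering with the concrete monodromies: the injectivity of $\pi_1(\mc{D})\to\Delta$ is exactly what the freeness of the $B_2$-action in Remark \ref{rmk:quivery} is designed to supply, but pinning down $\pi_1(\mc{D})$ and checking that these twists generate it compatibly is where the bookkeeping concentrates.
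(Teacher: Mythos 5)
There is a genuine gap, in fact two, and both sit exactly where the real work of the theorem lies. First, your entire architecture rests on upgrading $\kz$ to a covering map $\Stab(|\ww_{\PP^1}|)\to\mc{D}$, which you defer with the phrase ``exactly as in \cite{HMS}''. But the covering-space results of \cite{HMS} are proved for generic (twisted or analytic) K3 categories precisely because those categories contain no spherical objects, which trivializes the wall-and-chamber analysis; here the infinitely many spherical objects $\ko_C(n)$ are the entire source of difficulty, and establishing path-lifting over $\mc{D}$ requires controlling which of them become semistable along a path --- i.e.\ the deformation arguments of \cite{B2,B4}, or the explicit classification of stable spherical objects that the paper carries out instead. (Also note that the analogue of \cite[Thm.\ 1.1]{B4} is stated for the subcategory $\cat{D}=\{\ke:\pi_*\ke=0\}$, not for $\Db_0(X)$, so it cannot simply be quoted.) Second, even granting the covering map, it only makes $\Ss(\ww_{\PP^1})$ open and closed, hence a connected component; to conclude $\Ss(\ww_{\PP^1})=\Stab(|\ww_{\PP^1}|)$ you assert that ``every finite-length heart occurring here is, up to the $\glpiu$-action, one of the quivery subcategories''. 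Remark \ref{rmk:quivery} does not say this: it describes the $B_2$-action on the set of quivery subcategories, and gives no information about the heart of an arbitrary locally finite stability condition, which a priori need not be of finite length, let alone quivery. Proving that it is quivery (up to autoequivalence and $\glpiu$) is the main content of the paper's argument: one shows via \cite[Prop.\ 2.9]{HMS} that there exist two stable spherical objects with independent classes, reduces via \cite[Prop.\ 1.6]{IU} to the case where $\ko_C$ is one of them, classifies the second one as some $\ko_C(d)$ or $\ko_C(d)[1]$ by a case-by-case spectral sequence analysis, and only then identifies the heart with $\langle\ko_C(-1)[1],\ko_C\rangle$.

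By contrast, the paper deliberately avoids the period-domain route: connectedness is obtained by the direct classification just described, and simple-connectedness by a Seifert--Van Kampen argument on the explicit contractible chambers $lV_0$, $l\in B_2$, using the freeness of the $B_2$-action on ordered quivery subcategories. Your monodromy computation for simple-connectedness (identifying $\pi_1(\mc{D})$ with loops around the hyperplanes, showing the covering is regular with deck group the image of $G_O$, and deducing injectivity of $\pi_1(\mc{D})\to\Delta$ from freeness) is a plausible endgame once the covering map and the equality $\Ss(\ww_{\PP^1})=\Stab(|\ww_{\PP^1}|)$ are in hand, but as written the proposal assumes the two statements that the proof is actually about.
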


\begin{proof} We start by proving that $\Stab(|\ww_{\PP^1}|)$ is connected.
Say $\ss \in \Stab (|\ww_{\PP^1}|)$. For our convenience we take
the abelian category $\kp([0,1))$ as heart of the stability
condition $\ss$ instead of $\kp((0,1])$. This is no problem since,
up to the action of $\glpiu$, these two categories are the same.

By \cite[Prop.\ 2.9]{HMS} we know that every stable factor of a
spherical object must be spherical. It follows, since
$K(|\ww_{\PP^1}|)\cong \ZZ^2$, that there are at least two
$\ss$-stable spherical objects (with independent classes in
$K$-theory). Now, \cite[Prop.\ 1.6]{IU} says that any spherical
object is the image, up to shifts, of $\mc{O}_C$ via an
autoequivalence $\Ff\in G:=G_{\{\ko_C,\ko_C(1)\}}$. Since $G$
preserves the connected component  $\Sigma(|\omega_{\PP^1}|)$, we
may as well assume that $\mc{O}_C$ is a stable spherical object.
Write $\ks$ for another spherical stable object. Since $\mc{O}_C$
and $\ks$ are $\ss$-stable we may assume that, acting with
$\glpiu$, $\ff_{\ss} (\mc{O}_C)=0$ and that, up to shift,
$\ks\in\kp([0,1))$ and $\Hom^i_{\Db_0(X)}(\mc{O}_C,\ks) \neq 0$
only if $i\in\{0, 1\}$.

Suppose we know that $\ks\cong\ko_C(a)$, for some integer $a>0$. To show that $\mc{O}_C (-1) [1] \in \mc{P} ([0, 1))$, consider the triangle
\begin{equation}\label{eqn:tria}
\ko_C^{\oplus a+1} \stackrel{\psi}{\lto} \ko_C(a) \lto \ko_C(-1)^{\oplus a}[1],
\end{equation}
induced by the analogous one in $\Db(\PP^1)$. We claim that $\psi$
is an injection. For suppose not. Then, one can express
$\nk(\psi)$ as the extension $\kg' \lto \nk(\psi)\lto \kg$, with
$\kg' \in \kp(0,1)$ and $\kg\in \kp(0)$. The inclusion $\kg'
\hookrightarrow \ko_C^{\oplus a+1}$ forces $\kg' = 0$ and this
means that $\nk(\psi)$ is semistable of phase $0$. As $\ko_C$ is
stable, $\nk(\psi)\cong \ko_C^{\oplus k}$, for some $k>0$. This
and the triangle above give the conclusion that
$\Hom_{\Db_0(X)}(\ko_C, \ko_C(-1)^{\oplus a}) \neq 0$, which is
absurd. Therefore, $\ko_C(-1)^{\oplus a}[1] \in \kp([0,1))$, so
that $\ko_C(-1)[1] \in \kp([0,1))$. Since $\langle \ko_C(-1)[1],
\ko_C \rangle$ is the heart of a $t$-structure, and we have proved
that $\langle \ko_C(-1)[1], \ko_C \rangle \subset \kp([0,1))$,
Lemma 2.3 in \cite{emolo} yields $\langle \ko_C(-1)[1], \ko_C
\rangle = \kp([0,1))$. So, what we have shown is that, up to
component preserving autoequivalences and the action of $\glpiu$,
$\ss \in \Sigma(|\omega_{\PP^1}|)$.

If $\ks\cong\ko_C(b)[1]$, for some integer $b<0$, we reason in a similar way by considering the triangle
\begin{equation*}\label{eqn:tria1}
\ko_C^{\oplus(-b-1)}\lto\ko_C(b)[1]\lto \ko_C(-1)^{\oplus(-b)}[1].
\end{equation*}

Therefore it remains to prove that either $\ks=\ko_C(d)$, for some integer $d>0$, or $\ks=\ko_C(d)[1]$, for some $d<0$. By \cite[Cor.\ 3.10]{IU},
\begin{equation}\label{eqn:con1}
\bigoplus_q \mc{H}^q (\ks) \cong \mc{O}_C (a)^{\oplus r} \oplus \mc{O}_C (a+1)^{\oplus s},
\end{equation}
for some $a \in \ZZ$, $s \geq 0, r > 0$. So let us analyze all the
different possibilities.

\bigskip

\noindent {\sc Case $a\geq 2$ and $a\leq -3$.} First suppose
$a\geq 2$. Since $\Hom^i_{\Db_0(X)}(\mc{O}_C,\ks) \neq 0$ only if
$i\in\{0, 1\}$, an easy check using the spectral sequences
\begin{align}\label{eqn:spectr}
^{I\!}{E_2^{p,q}}=&\Hom_{\Db_0(X)}^p(\ko_C, \kh^q(\ks)) \Longrightarrow \Hom_{\Db_0(X)}^{p+q}(\ko_C,\ks)\\
\label{eqn:spectr2}
^{I\!I\!}{E_2^{p,q}}=&\Hom_{\Db(C)}^p(\ko_C,\ko_C(b)\otimes\wedge^q\kn_C)
\Longrightarrow \Hom_{\Db_0(X)}^{p+q}(i_*\ko_C, i_*\ko_C(b)),
\end{align}
where $\kn_C$ is the normal bundle of $C$ and $b\in\ZZ$, shows
that $\ks\cong\mc{O}_C (a)$. The case $a \leq -3$ is dealt with
similarly, using the spectral sequence
$\Hom^p_{\Db_0(X)}(\kh^{-q}(\ks),\ko_C)
\Longrightarrow\Hom_{\Db_0(X)}^{p+q}(\ks,\ko_C)$.

\bigskip

\noindent {\sc Case $a=1,-2$.} Both cases are completely similar, so we explicitly deal just with $a=1$. Using \eqref{eqn:spectr}, \eqref{eqn:spectr2} and the fact that $\nk(d_2^{0,2})= \Hom_{\Db_0(X)}(\ko_C, \kh^2(\ks))$ in \eqref{eqn:spectr}, it is easy to see that $\kh^q(\ks) \neq 0$ only if $q\in\{0,1\}$. Again, analyzing these spectral sequences one sees that $\kh^1(\ks)$ cannot contain $\ko_C(2)$, so $\kh^1(\ks) = \ko_C(1)^{\oplus r_1}$ for some $r_1 \geq 0$. The exact triangle
$$
\ko_C(1)^{\oplus r_0} \oplus \ko_C(2)^{\oplus s} \lto \ks \lto \ko_C(1)^{\oplus r_1}[-1]
$$
together with the vanishing $\Hom^2_{\Db_0(X)}(\ko_C(1), \ko_C(2)) = 0$ imply that $\ko_C(2)^{\oplus s}$ is a direct factor of $\ks$. But as $\ks$ is stable, this means that $\ks= \ko_C(2)$, $s=1$ and $r=0$, which contradicts our choice that $r >0$. So, one must have $s=0$.   The long exact sequence arising from the application of the functor $\Hom_{\Db_0(X)}(\ks[1],-)$ to the triangle
$$
\ko_C(1)^{\oplus r_0}  \lto \ks \lto \ko_C(1)^{\oplus r_1}[-1]
$$
shows that $\Hom_{\Db_0(X)}(\ks[1], \ko_C(1)^{\oplus r_0}) \cong \Hom_{\Db_0(X)}(\ks[1], \ks)$. It follows that the second map of the triangle above must be zero, for otherwise we get the contradiction that  $\Hom^{-1}_{\Db_0(X)}(\ks,\ks) \neq 0$. Using the fact that the cohomology of $\ks$ is concentrated in degrees $0$ and $1$, we get that $r_0=1$ and $\ks = \ko_C(1)$.

\bigskip

\noindent {\sc Case $a=0,-1$.} Consider first $a=0$, i.e.\ $\bigoplus_q \mc{H}^q (\ks) \cong \mc{O}_C^{\oplus r} \oplus \mc{O}_C (1)^{\oplus s}$ and define $l(\ks) := r + s$. If $l(\ks)=1$, one is done as above, so assume $l(\ks)>1$. By Lemma 4.2 in \cite{IU}, we have that $l(T_{\ko_C}(\ks)) < l(\ks)$. Also, $T_{\ko_C}(\ko_C)=\ko_C[-1]$. We see therefore that $\ko_C$ is stable for the stability condition $\tt = T_{\ko_C}(\ss)$ and $\bigoplus_q \mc{H}^q (\ks') \cong\mc{O}_C(a')^{\oplus r'} \oplus \mc{O}_C (a'+1)^{\oplus s'}$,
with $\ks' = T_{\ko_C}(\ks)$ and $r' + s'< l(\ks)$.

If $a'\not\in\{0,-1\}$, then we conclude applying the previous cases. If $a'=0$, then proceed further repeating the same procedure and considering $\ks'':=T_{\ko_C}(\ks')$ with $l(\ks'')<l(\ks')$. If $\bigoplus_q \mc{H}^q (\ks') \cong \mc{O}_C(-1)^{\oplus r'} \oplus \mc{O}_C^{\oplus s'}$, take $\Psi := (T_{\ko_C(-1)}(-) \otimes \pi^*\ko_{\PP^1}(2)[-1])$. Then, $l(\Psi(\ks')) =
l(T_{\ko_C(-1)}(\ks')) < l(\ks')$, and $\Psi(\ko_C) = \ko_C$. Repeating the argument a finite number of times, we reduce either to the case $a\not\in\{0,-1\}$ or to the case $l(\ks)=1$. In both cases we conclude that either $\ks=\ko_C(d)$, for some integer $d>0$, or $\ks=\ko_C(d)[1]$, for some $d<0$.

\bigskip

It remains to prove that $\Stab (|\ww_{\PP^1}|)$ is simply-connected.
Consider the open subset $V_0 := \til{U}_0^{O}$, with $O := \grf{\ko_{\PP^1}, \ko_{\PP^1} (1)}$.
Then the natural map from $V_0$ to
$$\kc := \grf{(m_0, m_1, \ff_0, \ff_1) \in \RR^4 \, : \, m_i > 0, \ \ff_1 -1 < \ff_0 < \ff_1 +1},$$
sending $\ss \in V_0$ to $(|Z(\ko_C [1])|, |Z(\ko_C (1))|, \ff (\ko_C [1]), \ff (\ko_C (1)))$ is an homeomorphism.
Hence $V_0$ is contractible and is the union of the three contractible regions
\begin{align*}
V_{0,1} &:= \grf{x \in \kc \, : \, \ff_0 < \ff_1} \cong \glpiu\\
V_{0,2} &:= \grf{x \in \kc \, : \, \ff_0 > \ff_1} \cong \glpiu\\
V_{0,3} &:= \grf{x \in \kc \, : \, \ff_0 = \ff_1} \cong \CC \times \RR_{>0},\\
\end{align*}
where $V_{0,2} = r V_{0,1}$ and $r$ was defined in Remark \ref{rmk:quivery}. By the same remark and by the previous part of the proof, we know that
$$\Stab (|\ww_{\PP^1}|) = \bigcup_{l \in B_2} l V_0,$$
where $l V_0$ is the set of stability conditions whose heart is, up to the action of $\glpiu$, the quivery subcategory $l \langle \ko_C [1], \ko_C (1) \rangle$.
Notice that $l V_0 \cap V_0 \neq \emptyset$ if and only if $l \in \grf{\tt_0^m, \tt_1^m, r}$ for some integer $m \in \ZZ$. Moreover in such a case, for all $m \in \ZZ$,
\begin{align*}
\tt_1^m V_0 \cap V_0 &= V_{0,1}\\
\tt_0^m V_0 \cap V_0 &= V_{0,2}.
\end{align*}
In particular, observe that $l V_{0,3} \cap V_{0,3} = \emptyset$
if $l$ is in the subgroup of $B_2$ generated by $\tau_0$ and
$\tau_1$, a fact which will be used implicitly later in the proof.
Thus we have the following easy consequence of Seifert--Van Kampen
theorem: $[*]$ Let $V$ be the open subset of $\Stab
(|\ww_{\PP^1}|)$ consisting of stability conditions whose heart
is, up to the action of $\glpiu$, a fixed quivery subcategory of
$\Db_0 (|\ww_{\PP^1}|)$ and let $h \in L := \grf{\tau_0, \tau_1,
r}$. Then $V \cup h V$ and $V \cup h^{-1} V$ are connected and
simply-connected.

Now, fix a point $\ss_0 \in V_{0,3}$. Take a continuous loop $\aa
: [0,1] \to \Stab (|\ww_{\PP^1}|)$ with base point $\ss_0$. Assume
that there exists $t_0 \in [0,1)$ such that $\aa ([0,t_0)) \cont
V_0$ and $\aa (t_0) \notin V_0$. Then there exist $i \in
\grf{0,1}$ and $m \in \ZZ$ such that $\aa (t_0) \in V_1:= \tt_i^m
V_0$. Continuing further, there exists $t_1 \in (t_0, 1]$ such
that $\aa ([t_0, t_1)) \cont V_1$ and $\aa (t_1) \notin V_1$, and
as before one has  $j \in \grf{0,1}$ and $n \in \ZZ$ such that
$\aa (t_1) \in V_2:= \tt_j^n V_1$. By compactness, and since
$\Stab (|\ww_{\PP^1}|)$ is a manifold, we may assume that $V_k =
V_0$, after a finite number of steps. Hence there exists $l \in F
(L)$ such that $l V_0 = V_0$, where $F (L)$ is the free group
generated by the set $L$. By Remark \ref{rmk:quivery}, the action
of $B_2$ on the set of ordered quivery subcategories is free.
Hence, up to multiplying by $r$, the class of $l$ in $B_2$ is
equal to the identity. This means that, up to contracting/adding
pieces of the form $h h^{-1}$ or $h^{-1}h$, with $h \in L$, by
$[*]$, we can assume
$$l = (k_1 s_1^{\pm 1} k_1^{-1}) \ldots (k_a s_a^{\pm 1} k_a^{-1}),$$
with $k_1, \ldots, k_a \in F(L)$ and $s_1, \ldots, s_a \in R(L)$, where $R(L) := \grf{r\tau_1r\tau_0^{-1}, \, r^2}$.
But then, again by $[*]$, the loop $\aa$ can be split as a composition of contractible loops, i.e.\ it is contractible.
Hence $\Stab (|\ww_{\PP^1}|)$ is simply-connected.\end{proof}

It seems reasonable that, in the case $N=2$, by adding all ``geometric'' stability conditions constructed along the lines of Example \ref{ex:K3ab}, we may be able to describe an actual connected component of both $\Stab (\PP^2)$ and $\Stab (|\ww_{\PP^2}|)$. In such a case, we would have again a fundamental domain in (a connected component of) $\Stab (|\ww_{\PP^2}|)$ isomorphic to an open ``non-degenerate'' subset of (a connected component of) $\Stab (\PP^2)$.

\begin{remark} (i) A statement analogous to Theorem \ref{thm:stab1} in the context of the local $3$-dimensional flop $\ko_{\PP^1}(-1)\oplus\ko_{\PP^1}(-1)$ over $\PP^1$ is a particular case of \cite[Thm.\ 1.1]{Toda1} and \cite[Thm.\ 7.2]{Toda2}.

(ii) We would like to mention that the case of $\Db_0(X)$ fits in with the equivariant examples considered in Section \ref{subsec:gen}. Indeed, as pointed out by Tony Pantev, the category $\Db_0 (\coh(X))$ of coherent sheaves on $X$ supported on the zero-section is equivalent to the derived category of $\omega_{\PP^N}^{\vee}$-equivariant coherent sheaves $\Db_{\omega_{\PP^N}^{\vee}}(\coh(\PP^N))$, with $\omega_{\PP^N}^\vee$ acting trivially on $\PP^N$.
 The proof will appear in \cite{MM}. Notice that the forgetful functor $\pi_*$ that we consider in this case does not satisfy condition \eqref{eq:ind}. Hence it does not induce stability conditions as in Section \ref{subsec:stabex}.\end{remark}

\bigskip

\subsection{Further examples}\label{subsec:moreex}~

\medskip

We conclude the paper with a brief discussion of a few examples related to the case considered in this section and where our techniques can be applied.

\subsubsection{Kleinian singularities} Let $\pi:X\to\CC^2/G$ be the minimal resolution of singularities, where $G$ is a finite subgroup of $\mathrm{Sl}_2(\CC)$, and $\cat{D}$ the full subcategory of $\Db(\coh(X))$ consisting of those objects $\ke$ such that $\pi_*\ke=0$. Stability conditions on $\cat{D}$ have been studied by Bridgeland in \cite{B4} (see also \cite{T} for the special case of $A_n$-singularities). More specifically, he describes a connected component $\Sigma$ which is a covering space of some period domain (see \cite[Thm.\ 1.1]{B4} for more details).

As mentioned in \cite{B4}, stability conditions on the category $\cat{D}$ should be related to those on the derived category of the path algebra of the Dynkin quiver associated to the group $G$. We can use our techniques to make this assertion precise. For simplicity we will explicitly treat only the case of $A_2$-singularity although the approach works in general.

Consider the Dynkin quiver $A_2:\,\bullet\to\bullet$ and the derived category $\Db(A_2)$ of its path algebra. Notice that, up to shifts, the category  $\Db(A_2)$ contains just three exceptional objects, two of which $\ks_0[1]$ and $\ks_1$ are the simples corresponding to the vertexes of $A_2$ and the third $\ks_2$ is the unique indecomposable extension of $\ks_0[1]$ and $\ks_1$. Stability conditions on $\Db(A_2)$ were described in \cite{emolo}.

Via the McKay correspondence, the category $\Db(X)$ is equivalent to $\Db(\widehat{A}_2)$, the derived category of the preprojective algebra associated to the affine Dynkin diagram $\widehat A_2$:
\[
\xymatrix{&\circ\ar@{-}[dr]\ar@{-}[dl]&\\\bullet\ar@{-}[rr]&&\bullet}
\]
By \cite{B4}, the natural faithful functor $\Db(A_2)\lto\Db(\widehat A_2)$ factorizes as $\Db(A_2)\mor[i]\cat{D}\mono\Db(\widehat A_2)$. Hence, the procedure described in Section \ref{subsec:stabex} may be applied to relate $\Stab(\Db(A_2))$ and $\Stab(\cat{D})$.

Consider $S_0:=\{\ks_0,\ks_1\}$, $S_1:=\{\ks_1,\ks_2\}$ and $S_2:=\{\ks_2[-1],\ks_0\}$ and the corresponding abelian categories $\cat{Q}^{S_i}_0$ defined as in \eqref{eqn:Q}. As in the previous section, it is easy to see that $\cat{Q}^{S_i}_0$ is $i$-admissible. Take the open subset $U^{S_i}\subseteq\Stab(\Db(A_2))$ consisting of stability conditions whose heart is $\cat{Q}^{S_i}_0$ (up to the action of $\glpiu$) and let $\Gamma=U^{S_0}\cup U^{S_1}\cup U^{S_2}$.

By Proposition \ref{pro:existence}, there exists an open connected subset $\widetilde\Gamma\subset\Stab(\cat{D})$ such that $i^{-1}:\widetilde\Gamma\to\Stab(\Db(A_2))$ is an open embedding whose image is $\Gamma$. By \cite[Thm.\ 1.1]{B4}, $\widetilde\Gamma$ is a fundamental domain for the action of the group $\Br(\cat{D})\subset\Aut(\cat{D})$ preserving the connected component $\Sigma$.


\subsubsection{Rational curves in K3 surfaces} Let $X$ be a smooth projective K3 surface containing a smooth rational curve $C$. Since the formal neighbourhood of $C$ in $X$ is isomorphic to the formal neighbourhood of $\PP^1$ in $\omega_{\PP^1}$, the category $\Db_0(|\omega_{\PP^1}|)$ can be seen as a fully faithful subcategory of $\Db(X)$. The embedding functor $i:\Db_0(|\omega_{\PP^1}|)\to\Db(X)$ can be used to induce stability conditions from $\Db(X)$ to $\Db_0(|\omega_{\PP^1}|)$.

Using the exact sequence $0\to\ko_X(-C)\to\ko_X\to\ko_C\to 0$, it is easy to see that the sheaves $\ko_C(k)$, $k\in\ZZ$, are not all stable in any stability condition $\sigma$ in the open subset $U(X)$ described in \cite[Sect.\ 10]{B2}. By \cite[Thm.\ 12.1]{B2}, there exists a stability condition $\sigma$ in the boundary $\partial U(X)_{(C_k)}$ of $U(X)$ of type $(C_k)$ (i.e.\ in $\sigma$ the objects $\ko_C(k)[1]$ and $\ko_C(k+1)$ are both stable of the same phase). Hence $\sigma$ induces a stability condition on $\Stab(\Db_0(|\omega_{\PP^1}|))$ via $i^{-1}$ (in other words, $\sigma\in\mathrm{Dom}(i^{-1})$). Since all stability conditions in $\NStab^\dag(\Db(X))$ are obtained from the stability conditions in the closure of $U(X)$ by applying autoequivalences, it is easy to see that $\mathrm{Dom}(i^{-1})\cap\NStab^\dag(\Db(X))$ concides with $$\{\Phi(\sigma)\in\NStab^\dag(\Db(X)):\sigma\in\partial U(X)_{(C_k)}\mbox{ and }\Phi\in\langle T_{\ko_C(m)}:m\in\ZZ\rangle\}.$$

Intuitively, letting the ample class $\omega$ in Example \ref{ex:K3ab} go to infinity produces a degeneration of Bridgeland's stability conditions which, by acting with the group $\langle T_{\ko_C(m)}:m\in\ZZ\rangle$, yields a surjection onto $\Stab(\Db_0(|\omega_{\PP^1}|))$.

\medskip

{\small\noindent{\bf Acknowledgements.}  The final version of this
paper was written during the authors' stay at the
Max-Planck-Institut f\"{u}r Mathematik (Bonn) whose hospitality is
greatfully acknowledged. We thank Tom Bridgeland and Daniel
Huybrechts for useful conversations and for going through a
preliminary version of this paper. The first and second named
authors are grateful to Tony Pantev for useful discussions. S.M.\
thanks Daniel Huybrechts for inviting him to the University of
Bonn; he is especially grateful to Tom Bridgeland and the
University of Sheffield for support and hospitality for the
academic year 2006--07. E.M.\ expresses his gratitude to Ugo
Bruzzo for advice and many suggestions at a preliminary stage of
this work. We also thank Yukinobu Toda for useful conversations
and for pointing out the reference \cite{Pol}. Input from So Okada
for a different project involving a similar circle of ideas is
appreciated. Finally, we want to thank the referee for carefully
reading the paper and for pointing out several improvements.}



\begin{thebibliography}{99}

\bibitem{BPV} W. Barth, K. Hulek, C. Peters, A. Van de Ven, \emph{Compact complex surfaces}, Springer (2004).

\bibitem{Arend} A. Bayer, \emph{Polynomial Bridgeland stability conditions and the large volume limit},
arXiv:0712.1083.

\bibitem{Beil} A.A. Beilinson, \emph{Coherent sheaves on $\PP^n$ and problems in linear algebra}, Functional Anal. Appl. {\bf 12} (1978), 214--216.

\bibitem{BL} J. Bernstein, V. Lunts, \emph{Equivariant sheaves and functors}, Lecture Notes in Mathematics 1578, Springer (1994).

\bibitem{Bond} A.I. Bondal, \emph{Representations of associative algebras and coherent sheaves}, Math. USSR-Izv. {\bf 34} (1990), 23--42.

\bibitem{B1} T. Bridgeland, \emph{Stability conditions on triangulated categories}, Ann. Math. {\bf 166} (2007), 317--346.

\bibitem{B2} T. Bridgeland, \emph{Stability conditions on K3 surfaces}, Duke Math. J. {\bf 141} (2008), 241--291.

\bibitem{B3} T. Bridgeland, \emph{$T$-structures on some local Calabi-Yau varieties}, J. Algebra {\bf 289} (2005), 453--483.

\bibitem{B4} T. Bridgeland, \emph{Stability conditions and Kleinian singularities}, arXiv:math.AG/0508257.

\bibitem{B5} T. Bridgeland, \emph{Stability conditions on a non-compact Calabi-Yau threefold}, Comm. Math. Phys. {\bf 266} (2006), 715-733.

\bibitem{BR} T. Bridgeland, \emph{Spaces of stability conditions}, arXiv:math/0611510.

\bibitem{BKR} T. Bridgeland, A. King, M. Reid, \emph{The McKay
    correspondence as an equivalence of derived categories}, J. Am. Math. Soc. {\bf 14} (2001), 535--554.

\bibitem{BM} T. Bridgeland, A. Maciocia, \emph{Complex surfaces with equivalent derived categories}, Math. Z. {\bf 236} (2001), 677--697.

\bibitem{Do} M. Douglas, \emph{Dirichlet branes, homological mirror symmetry, and stability}, Proc.\ Int.\ Congr.\ Math.\ Beijing (2002), 395--408.

\bibitem{GL} W. Geigle, H. Lenzing, \emph{A class of weighted projective curves arising in representation theory of finite-dimensional algebras}, in:  Singularities, representation of algebras, and vector bundles (Lambrecht, 1985),  265--297, Lecture Notes in Math. {\bf 1273}, Springer (1987).

\bibitem{GR} A.L. Gorodentsev, A.N. Rudakov, \emph{Exceptional vector bundles on projective spaces}, Duke Math. J. {\bf 54} (1987), 115--130.

\bibitem{Ho1} E. Horikawa, \emph{On the periods of Enriques surfaces. I}, Math. Ann. {\bf 234} (1978), 73--88.

\bibitem{Ho2} E. Horikawa, \emph{On the periods of Enriques surfaces. II}, Math. Ann. {\bf 235} (1978), 217--246.

\bibitem{HLOY} S. Hosono, B.H. Lian, K. Oguiso, S.-T. Yau, \emph{Autoequivalences of derived category of a K3 surface and monodromy transformations}, J. Alg. Geom. {\bf 13} (2004), 513--545.

\bibitem{Hu1} D. Huybrechts, \emph{Fourier--Mukai transforms in algebraic geometry}, Oxford Science Publications (2006).

\bibitem{HMS} D. Huybrechts, E. Macr{\`\i}, P. Stellari, \emph{Stability conditions for generic K3 categories}, Compositio Math. {\bf 144} (2008), 134--162.

\bibitem{HMS1} D. Huybrechts, E. Macr{\`\i}, P. Stellari, \emph{Derived equivalences of K3 surfaces and orientation}, arXiv:0710.1645.

\bibitem{HS1} D.\ Huybrechts, P. Stellari, \emph{Equivalences of twisted K3 surfaces}, Math.\ Ann.\ {\bf 332} (2005), 901--936.

\bibitem{IUU} A. Ishii, K. Ueda, H. Uehara, \emph{Stability conditions and $A_n$-singularities}, math.AG/0609551.

\bibitem{IU} A. Ishii, H. Uehara, \emph{Autoequivalences of derived categories on the minimal resolutions of $A_n$-singularities on surfaces}, J. Differential Geom. {\bf 71} (2005), 385--435.

\bibitem{Joyce4} D. Joyce, \emph{Holomorphic generating functions for invariants counting coherent
sheaves on Calabi-Yau 3-folds}, Geom.\ Topol.\ {\bf 11} (2007), 667-725.

\bibitem{KST} H. Kajiura, K. Saito, A. Takahashi, \emph{Matrix factorizations and representations of quivers II: Type ADE case}, Adv. Math. {\bf 211} (2007), 327--362.

\bibitem{K} J.H. Keum, \emph{Every algebraic Kummer surface is the K3-cover of an Enriques surface}, Nagoya Math. J. {\bf 71} (2005), 385--435.

\bibitem{KontSoib} M. Kontsevich, Y. Soibelman, \emph{Stability structures, motivic Donaldson-Thomas invariants and cluster transformations}, arXiv:0811.2435.

\bibitem{emolo1} E. Macr\`\i, \emph{Some examples of spaces of stability conditions on derived categories}, math.AG/0411613.

\bibitem{emolo2} E. Macr\`\i, \emph{Stability conditions on derived categories}, Ph.D. Thesis, SISSA (2006).

\bibitem{emolo} E. Macr\`\i, \emph{Stability conditions on curves}, Math. Res. Lett. {\bf 14} (2007), 657--672.

\bibitem{MM} E. Macr\`\i, S. Mehrotra, \emph{Remarks on stability conditions on local Calabi--Yau varieties}, in preparation.

\bibitem{Mo} D.R. Morrison, \emph{On K3 surfaces with large Picard number}, Invent. Math. {\bf 75} (1984), 105--121.

\bibitem{Mu} S. Mukai, \emph{On the moduli space of bundles on K3
surfaces, I}, In: Vector Bundles on Algebraic Varieties, Oxford
University Press, Bombay and London (1987), 341--413.

\bibitem{Ni1} V.V. Nikulin, \emph{Finite automorphism groups of K\"{a}hler K3 surfaces}, Trans. Moscow Math. Soc. {\bf 38} (1980), 71--135.


\bibitem{Okada1} S. Okada, \emph{Stability manifold of $\PP^1$}, J. Algebraic Geom. {\bf 15} (2006), 487--505.

\bibitem{Okada2} S. Okada, \emph{On stability manifolds of Calabi--Yau surfaces}, Int. Math. Res. Not. (2006), ID 58743.

\bibitem{Or1} D. Orlov, \emph{Equivalences of derived categories and K3 surfaces}, J. Math. Sci. {\bf 84} (1997), 1361--1381.

\bibitem{PT1} R. Pandharipande, R. Thomas, \emph{Curve counting via stable pairs in the derived category}, arXiv:0707.2348.

\bibitem{PT2} R. Pandharipande, R. Thomas, \emph{Stable pairs and BPS invariants }, arXiv:0711.3899.

\bibitem{P} D. Ploog, \emph{Group of autoequivalences of derived categories of smooth projective varieties}, Ph.D. Thesis, FU-Berlin (2004).

\bibitem{Pl} D. Ploog, \emph{Equivariant autoequivalences for finite group actions}, Adv. Math. {\bf 216} (2007), 62--74.

\bibitem{Pol} A. Polishchuk, \emph{Constant families of $t$-structures on derived categories of coherent sheaves}, Mosc. Math. J. {\bf 7} (2007), 109--134.

\bibitem{ST} P. Seidel, R. Thomas, \emph{Braid group actions on derived categories of coherent sheaves}, Duke Math. J. {\bf 108} (2001), 37--108.

\bibitem{T} R. Thomas, \emph{Stability conditions and the Braid group}, Comm. Anal. Geom. {\bf 14} (2006), 135--161.

\bibitem{Toda1} Y. Toda, \emph{Stability conditions and crepant small resolutions}, Trans.\ Amer.\ Math.\ Soc.\ {\bf 360} (2008), 6149--6178..

\bibitem{Toda2} Y. Toda, \emph{Stability conditions and Calabi--Yau fibrations}, J.\ Algebraic Geom.\ {\bf 18} (2009), 101--133..

\bibitem{Toda5} Y. Toda, \emph{Limit stable objects on Calabi-Yau 3-folds}, arXiv:0803.2356.

\bibitem{Toda6} Y. Toda, \emph{Generating functions of stable pair invariants via wall-crossings in derived categories}, arXiv:0806.0062.

\bibitem{U} K. Ueda, \emph{Homological Mirror Symmetry and Simple Elliptic Singularities},
math.AG/0604361.

\end{thebibliography}
\end{document}